\newtheorem{theorem}{Theorem}[section]
\newtheorem{proposition}[theorem]{Proposition}
\newtheorem{lemma}[theorem]{Lemma}
\newtheorem{example}[theorem]{Example}
\newtheorem{corollary}[theorem]{Corollary}
\newtheorem{assertion}[theorem]{Assertion}
\newtheorem{definition}[theorem]{Definition}
\newcommand{\sol}{{\text{\rm sol}}}
\renewcommand{\subsection}{\@startsection{subsection}{1}
{0pt}{3.25ex plus 1ex minus.2ex}{-1em}{\normalfont\normalsize\bf}}\makeatother
\begin{document}

%%%%Page1%%%
\title{{\bf On limited and almost limited operators between Banach lattices}}
\maketitle
\author{\centering{{Safak Alpay$^{1}$, Svetlana Gorokhova $^{2}$\\ 
\small $1$ Middle East Technical University, 06800 Ankara, Turkey\\ 
\small $2$ Southern Mathematical Institute VSC RAS, 362025 Vladikavkaz, Russia}
\abstract{We study (almost) limited operators in Banach lattices and their relations to
L-weakly compact, semi-compact, and Dunford-Pettis operators. 
Several further related topics are investigated.}

\vspace{5mm}
{\bf Keywords:} Banach lattice, limited set, L-weakly compact set, semi-compact operator,
Dunford-Pettis operator.

\vspace{3mm}
{\bf MSC2020:} {\normalsize 46B25, 46B42, 46B50, 47B60}
}}
 
\medskip

%%%%%%%%%%%%%%%%%%%%
\section{Introduction and Preliminaries}
%%%%%%%%%%%%%%%%%%%%

\noindent
Limited operators and their relations to L(M)weakly compact, semi-compact, Dunford-Pettis, and other classes of 
operators attract permanent attention of many researchers 
(see, for example \cite{AEG_duality,AG_aoLwc,BKAB2024,ArdaChen23,EG23aff,OM22,AEG22} and references therein). 
The aim of this work is to find conditions to ensure inclusion relation among major classes of
operators that are in use. In trying to do so we also touch upon properties of Banach lattices and 
note a method of producing new classes of operators ii them. Collectively qualified families of operators 
were studied recently in \cite{Em24coll_order,Em24coll,Em24coLevi,AEG_collect1}. In the end of the paper we 
investigate the collective versions of those results.

Throughout the paper, all vector spaces are real, operators are linear; letters $X$, $Y$, and $Z$ 
denote Banach spaces; $E$, $F$, and $G$ denote Banach lattices. A subset $C$ of $X$ is called
{\em bounded} if $C$ is norm bounded. We denote by $B_X$ the closed unit ball of $X$,
by $\text{\rm L}(X,Y)$ ($\text{\rm K}(X,Y)$, $\text{\rm W}(X,Y)$) 
the space of bounded (compact, weakly compact) operators 
from $X$ to $Y$, by $\text{\rm Id}_X$ the identity operator on $X$, 
by $E_+$ the positive cone of $E$, by $\text{sol}(A) = \bigcup_{a\in A}[-|a|,|a|]$ the solid hull 
of $A\subseteq E$, by $x_\alpha\downarrow 0$ a decreasing net $(x_\alpha)$ with $\inf_\alpha x_\alpha=0$, 
and by $E^a$ the order continuous part of $E$. 

\begin{lemma}\label{E' is o-cont}%OK
{\em (\cite[Theorem~4.59]{AlBu} and \cite[2.4.14]{Mey91})}
TFAE. 
\begin{enumerate}[\em (i)] 
\item 
The norm on $E^\ast$ is order continuous.
\item 
$E^\ast$ is a \text{\rm KB}-space.
\item 
Every disjoint bounded sequence in $E$ is weakly null.
\end{enumerate}
\end{lemma}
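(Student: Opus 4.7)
The plan is to close the loop (ii) $\Rightarrow$ (i) $\Rightarrow$ (iii) $\Rightarrow$ (i) $\Rightarrow$ (ii), leaning on two classical ingredients: that $E^\ast$ is always Dedekind complete as the order dual of a Banach lattice, and the standard disjointification / band-projection toolkit from Meyer-Nieberg and Aliprantis-Burkinshaw. The implication (ii) $\Rightarrow$ (i) is immediate from the definition of a \text{KB}-space, so the work lies in (i) $\Rightarrow$ (iii), its converse, and finally upgrading (i) to (ii).

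For (i) $\Rightarrow$ (iii), I would fix a disjoint bounded sequence $(x_n)\subset E$ and, splitting into positive and negative parts, assume $x_n\geq 0$. Fix $\phi\in E^\ast_+$. Using Dedekind completeness of $E^\ast$, form the band projections $P_n$ of $E^\ast$ onto the band generated by the tail $\{x_m:m\geq n\}$ (viewed in $E^{\ast\ast}$), and set $\psi_n:=P_n\phi$. The disjointness of $(x_n)$ forces $\psi_n\downarrow 0$ in $E^\ast$, so (i) gives $\|\psi_n\|\to 0$; combined with $\phi(x_n)\leq \psi_n(x_n)\leq\|\psi_n\|\|x_n\|$ this yields $\phi(x_n)\to 0$ and hence $x_n\xrightarrow{w}0$. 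The converse (iii) $\Rightarrow$ (i) is by contraposition: if $\phi_\alpha\downarrow 0$ in $E^\ast$ with $\|\phi_\alpha\|\geq\varepsilon$, extract $x_\alpha\in B_E$ with $\phi_\alpha(x_\alpha)\geq\varepsilon/2$, then apply a Kadec--Pe\l czy\'nski-type lattice disjointification to produce a disjoint bounded sequence $(y_n)\subset E$ on which some single $\phi\in E^\ast$ stays bounded below, contradicting (iii). Finally, for (i) $\Rightarrow$ (ii), I would use (iii) together with Dedekind completeness of $E^\ast$: a failure of the \text{KB}-property would supply a norm-bounded increasing positive sequence in $E^\ast$ that is not norm Cauchy, whose successive differences would reduce, via another disjointification, to a disjoint bounded sequence in $E$ failing to be weakly null.

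I expect the main obstacle to be the disjointification step in (iii) $\Rightarrow$ (i): turning the quantitative failure of order continuity at the level of $\phi_\alpha\downarrow 0$ into an honest disjoint bounded sequence in $E$ requires a careful double extraction (first thinning the net to a sequence of differences $\phi_{n}-\phi_{n+1}$, then choosing testing vectors $x_n$ and sliding them along shrinking supports so as to preserve the lower bound while gaining mutual disjointness). Once this disjointification is in hand, the rest of the argument is bookkeeping within the standard band-calculus in $E^\ast$, and I would then defer to \cite[Theorem~4.59]{AlBu} and \cite[2.4.14]{Mey91} for the finer point-set details rather than reproducing them in full.
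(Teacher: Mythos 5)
The paper does not prove this lemma at all: it is quoted verbatim from the literature, with the proof delegated to \cite[Theorem~4.59]{AlBu} and \cite[2.4.14]{Mey91}. So the only question is whether your sketch would stand on its own, and as written it would not, for two reasons. First, the implication (iii)$\Rightarrow$(i) is precisely the hard content of \cite[Theorem~4.59]{AlBu} (the Kadec--Pe\l czy\'nski-type extraction of a disjoint sequence witnessing the failure of order continuity, equivalently the embedding of $\ell^1$ into $E$), and you explicitly ``defer to the references'' for it; a proof that defers its hardest implication to the very theorem being proved is not a proof. Second, and more seriously, your route for (i)$\Rightarrow$(ii) would fail: the successive differences of a norm-bounded increasing non-Cauchy sequence live in $E^\ast$, not in $E$, and no disjointification can turn them into a disjoint bounded sequence in $E$ that is not weakly null --- under hypothesis (i), which you have already shown equivalent to (iii), no such sequence in $E$ exists, so the contradiction you are aiming for cannot be reached this way. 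The correct (and much shorter) mechanism is internal to $E^\ast$: if $0\le f_n\uparrow$ is norm bounded, then $f(x):=\lim_n f_n(x)$ for $x\in E_+$ defines an element $f\in E^\ast$ with $f=\sup_n f_n$ (this is where being a \emph{dual} lattice is used --- norm-bounded increasing positive sequences are automatically order bounded), whence $f-f_n\downarrow 0$ and order continuity gives $\|f-f_n\|\to 0$.

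Your (i)$\Rightarrow$(iii) is essentially the classical argument, but the pivotal claim $\psi_n\downarrow 0$ is asserted rather than proved. It is true, but only because the carrier bands in $E^\ast$ of the pairwise disjoint elements $x_m$ are themselves pairwise disjoint bands (this is \cite[Theorem~116.3]{ZaII}, which the paper invokes elsewhere), so that $\psi_n=\psi_1-\sum_{m<n}\phi_m$ with $\phi_m$ the component of $\phi$ in the carrier of $x_m$ and $\psi_1=\sup_k\sum_{m\le k}\phi_m$; Dedekind completeness of $E^\ast$ is what makes this supremum exist. Also note the bands in question are carriers (disjoint complements of null ideals) formed \emph{in} $E^\ast$, not bands of $E^{\ast\ast}$ generated by the $x_m$; band projections of $E^{\ast\ast}$ do not act on $\phi\in E^\ast$. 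With those repairs, (ii)$\Rightarrow$(i) and (i)$\Rightarrow$(iii) are fine, but the cycle does not close without an actual proof of (iii)$\Rightarrow$(i) and a corrected (i)$\Rightarrow$(ii).
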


\begin{definition}\label{limited subset}%no
{\em
A bounded subset $C$
\begin{enumerate}[a)]
\item 
of $X$ is {\em limited}, 
if each \text{\rm w}$^\ast$-null  sequence in $X^\ast$ is uniformly null on $C$.
\item
of $X$ is {\em Dunford-Pettis} or a \text{\rm DP} {\em set}
if each weakly null  sequence in $X^\ast$ is uniformly null on $C$.
\item 
of $E$ is {\em L-weakly compact} or an \text{\rm Lwc} {\em set} if each disjoint 
sequence in $\sol(C)$ is norm null.
\item
of $E$ is {\em almost order bounded} if, for each $\varepsilon>0$ there exist $u_\varepsilon\in E_+$ 
satisfying $C\subseteq[-u_\varepsilon,u_\varepsilon]+\varepsilon B_X$.
\end{enumerate}}
\end{definition}

\noindent
It is well known that $B_X$ is not limited unless $\dim(X)<\infty$ (however, $B_{c_0}$ 
is a limited subset of $\ell^\infty$ by Phillip's Lemma \cite[Theorem 4.67]{AlBu}).  For every \text{\rm Lwc} subset $A$ of $E$, 
we have $A\subseteq E^a$ \cite[p.\,212]{Mey91}. The following technical lemma is an extension of the 
Grothendieck theorem.  

\begin{lemma}\label{Lemma 3}%OKKK
Let $C$ be a bounded subset of $X$ $($or $E$$)$. If, for each $\varepsilon > 0$, there exists
\begin{enumerate}[\em (i)]
\item 
a relatively compact subset $C_\varepsilon \subseteq X$, 
$C\subseteq C_\varepsilon + \varepsilon B_X$, then $C$ is relatively compact.
\item 
a relatively weakly compact $C_\varepsilon\subseteq X$, 
$C\subseteq C_\varepsilon + \varepsilon B_X$, then $C$ is relatively weakly compact.
\item 
a limited $C_\varepsilon\subseteq E$, 
$C\subseteq C_\varepsilon + \varepsilon B_E$, then $C$ is limited.
\item 
an almost limited $C_\varepsilon\subseteq E$, $C\subseteq C_\varepsilon + \varepsilon B_E$, then $C$ is almost limited.
\end{enumerate}
\end{lemma}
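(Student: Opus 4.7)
The four parts share a common architecture: each listed property is stable under arbitrarily small norm perturbations, so my plan is to exploit the approximation $C\subseteq C_\varepsilon+\varepsilon B$ and let $\varepsilon\to 0$. Part (i) is classical: fix $\varepsilon>0$ and cover the relatively compact set $C_\varepsilon$ by finitely many $\varepsilon$-balls; the hypothesis then yields a finite $2\varepsilon$-net for $C$, so $C$ is totally bounded and hence relatively compact in the complete space $X$.

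For (ii), the direct argument in $X$ fails because relative weak compactness cannot be tested by countably many functionals. Instead I would pass to the bidual: viewing $C\subseteq X\subseteq X^{\ast\ast}$ and using $w^\ast$-compactness of $B_{X^{\ast\ast}}$, the hypothesis lifts to
\[
\overline{C}^{\,w^\ast}\subseteq\overline{C_\varepsilon}^{\,w^\ast}+\varepsilon B_{X^{\ast\ast}}.
\]
Since $C_\varepsilon$ is relatively weakly compact in $X$, its weak closure is $w^\ast$-closed in $X^{\ast\ast}$, so $\overline{C_\varepsilon}^{\,w^\ast}\subseteq X$. Hence every $z\in\overline{C}^{\,w^\ast}$ satisfies $\mathrm{dist}(z,X)\leq\varepsilon$; letting $\varepsilon\to 0$ and using norm-closedness of $X$ in $X^{\ast\ast}$ yields $\overline{C}^{\,w^\ast}\subseteq X$, which by Goldstine is weakly compact in $X$ and contains $C$.

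Parts (iii) and (iv) reduce to a uniform duality estimate. Let $(f_n)\subseteq E^\ast$ be a $w^\ast$-null sequence (respectively, a disjoint $w^\ast$-null sequence); it is norm bounded by some $M$. Given $\varepsilon>0$, pick a limited (resp.\ almost limited) $C_\varepsilon$ with $C\subseteq C_\varepsilon+\varepsilon B_E$, and split each $x\in C$ as $x=c+b$ with $c\in C_\varepsilon$, $\|b\|\leq\varepsilon$. Then
\[
\sup_{x\in C}|f_n(x)|\leq\sup_{c\in C_\varepsilon}|f_n(c)|+M\varepsilon,
\]
and the first term on the right tends to $0$ because $C_\varepsilon$ is (almost) limited. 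Sending $\varepsilon\to 0$ completes the argument. The main obstacle is (ii): the remaining three parts collapse to the common splitting $x=c+b$ and a one-line estimate, whereas (ii) forces the bidual detour together with Goldstine's theorem and the $w^\ast$-compactness of $B_{X^{\ast\ast}}$.
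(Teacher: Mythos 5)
Your proof is correct, and it matches the paper's treatment in the only sense available: the paper offers no proof of this lemma, referring instead to the literature (item (i) ``a standard fact'', item (ii) to Aliprantis--Burkinshaw Theorem~3.44, items (iii) and (iv) to \cite[Lemma~2.2]{EMM13}), and your arguments --- the finite $2\varepsilon$-net for (i), the bidual/$w^\ast$-closure argument for (ii), and the splitting $x=c+b$ with the uniform bound $\sup_{x\in C}|f_n(x)|\le\sup_{c\in C_\varepsilon}|f_n(c)|+M\varepsilon$ for (iii) and (iv) --- are exactly the standard proofs found in those sources. One cosmetic remark on (ii): the final step is not Goldstine's theorem but the fact that the relative $w^\ast$-topology of $X^{\ast\ast}$ on $X$ coincides with the weak topology of $X$, so the $w^\ast$-compact set $\overline{C}^{\,w^\ast}\subseteq X$ is weakly compact in $X$.
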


\noindent
The item (i) of Lemma~\ref{Lemma 3} is a standard fact. 
For (ii), we refer to~\cite[Theorem 3.44]{AlBu}.
The items (iii) and (iv) are contained in \cite[Lemma 2.2]{EMM13}.

The following two definitions will be permanently used through the paper.

\begin{definition}\label{Main Schur property}%no
{\em A Banach space $X$ has:
\begin{enumerate}[a)]
\item
the {\em Dunford--Pettis property}  (or $X\in\text{\rm (DPP)}$) if $f_n(x_n)\to 0$ for each 
weakly null $(x_n)$ in $X$ and each weakly null $(f_n)$ in $X^\ast$ \cite[p.\,341]{AlBu};
%\item
%the  {\em Dunford--Pettis$^\ast$ property} (or $X\in\text{\rm (DP$^\ast$P)}$) if 
%weakly compact subsets of $X$ are limited. 
\item 
the {\em Gelfand--Phillips property} (or $X\in\text{\rm (GPP)}$)
if limited subsets of $X$ are relatively compact \cite{BD}.
\end{enumerate}}
\end{definition}

\noindent
Separable Banach spaces and reflexive Banach spaces are 
in \text{\rm (GPP)} by \cite{BD}.
%If $E\in\text{\rm (DP$^\ast$P)}$, then 
%weakly compact operators $T: X\to E$ are limited. 
The closed unit ball of $\ell^\infty$ is almost limited, but 
the closed unit ball of $c_0$ is not almost limited.

\begin{definition}\label{Schur property}%OK
{\em A Banach lattice $E$ has:
\begin{enumerate}[a)]
\item
the {\em dual disjoint \text{\rm w}$^\ast$-property} (or $E\in${\rm DDw}$^\ast${\rm P}) if every disjoint bounded sequence in $E^\ast$ is \text{\rm w}$^\ast$-null. 
\item 
the {\em positive Schur property} (or $E\in\text{\rm (PSP)}$)
if each positive weakly null sequence in $E$ is norm null \cite{Wnuk2013};
%\item 
%the {\em dual positive Schur property} (or $E\in\text{\rm (DPSP)}$) if each positive 
%\text{\rm w}$^\ast$-null sequence in $E^\ast$ is norm null; % (cf. \cite[Definition~3.3]{AEW});
\item 
the {\em dual disjoint Schur property} (or $E\in\text{\rm (DDSP)}$)
if each disjoint \text{\rm w}$^\ast$-null sequence in $E^\ast$ is norm null 
\cite[Definition~3.2)]{EMM13};
\item
the {\em property} (d) (or $E\in\text{\rm (d)}$) if, 
for every disjoint $\text{\rm w}^\ast$-null $(f_n)$ in $E^\ast$,
the sequence $(|f_n|)$ is also $\text{\rm w}^\ast$-null \cite[Definition~1]{El};
%\item 
%the {\em Grothendieck property} (shortly, $E\in(\text{\rm GP})$) 
%if \text{\rm w}$^\ast$-null sequences in $E^\ast $ are weakly null \cite[p.\,760]{Wnuk2013};
\item 
the {\em positive Grothendieck property} (or $E\in\text{\rm (PGP)}$) if each positive 
\text{\rm w}$^\ast$-null sequence in $E^\ast$ is weakly null \cite[p.\,760]{Wnuk2013}; 
\item 
the {\em disjoint Grothendieck property} (or $E\in(\text{\rm DGP})$) 
if disjoint \text{\rm w}$^\ast$-null sequences in $E^\ast $ are weakly null.
\end{enumerate}}
\end{definition}

\noindent
The \text{\rm DGP} was studied in \cite{MFMA} under the name ``weak Grothendieck property".
The following assertion illustrates relations of $E\in${\rm DDw}$^\ast${\rm P} to other properties.

\begin{assertion}\label{DDw*P}
For every Banach lattice $E$ the following holds.
$$
   E\in(\text{\rm DGP})\cap(\text{\rm DDw}^\ast\text{\rm P})\Longrightarrow E^{\ast\ast} 
   \text{\it \ is a \text{\rm KB}-space}\Longrightarrow 
   E\in(\text{\rm DDw}^\ast\text{\rm P}).
$$
\end{assertion}

\begin{proof}
Let $E\in(\text{\rm DGP})\cap(\text{\rm DDw$^\ast$P})$ but $E^{\ast\ast}$ 
is not a \text{\rm KB}-space.
Then, by Lemma~\ref{E' is o-cont}, there is a disjoint bounded $(f_n)$ in $E^\ast$
which is not weakly null. However $f_n\stackrel{\text{\rm w}^\ast}{\to}0$
because $E\in(\text{\rm DDw$^\ast$P})$, and hence 
$f_n\stackrel{\text{\rm w}}{\to}0$ as $E\in(\text{\rm DGP})$.
The obtained contradiction proves the first implication.

For the second implication, let $E^{\ast\ast}$ be \text{\rm KB}. By Lemma~\ref{E' is o-cont}, every disjoint $(f_n)$ in $B_{E^\ast}$ 
is weakly null and hence $\text{\rm w}^\ast$-null.  
It follows $E\in(\text{\rm DDw$^\ast$P})$.
\end{proof}

%\medskip
\noindent
Recall that $E$ {\em has sequentially $\text{\rm w}^\ast$-continuous lattice operations} if
$f_n\stackrel{w^\ast}{\to}0$ in $E^\ast$ implies $|f_n|\stackrel{w^\ast}{\to}0$.
Any AM-space with unit has sequentially $\text{\rm w}^\ast$-continuous lattice operations.
The property \text{\rm (d)} is weaker than the sequential 
$\text{\rm w}^\ast$-con\-ti\-nui\-ty of the lattice operations, 
e.g. in $\ell^{\infty}$. It was proved in \cite[Proposition~1.4]{Wnuk2013}, 
that every Dedekind $\sigma$-complete $E$ has the property (d) \cite[Corollary~2.4]{EMM13}, 
and it was observed in \cite[Remark~1.5]{Wnuk2013} 
that $\ell^{\infty}/c_0\in\text{\rm (d)}$
but $\ell^{\infty}/c_0$ is not Dedekind $\sigma$-complete.
The lattice ${\text\rm L}^p[0,1]$, $1\leq p\leq\infty$, has the property~(d), 
whereas $C[0,1] \not\in\text{\rm (d)}$ (Example~\ref{DPSP but not DSP}).    

\medskip
\noindent
The following assertion \cite[Theorem~3.3]{EMM13} describes 
the impact of the property~(d) on dual Schur properties.

\begin{assertion}\label{DSP and (d)} %OK
The following holds for Banach lattices$:$
\begin{enumerate}[\em (i)]
\item
$(\text{\rm DDSP}) \subseteq (\text{\rm DPSP})$$;$
\item 
$((\text{\rm DDSP})\cap \text{\rm (d)}) = ((\text{\rm DPSP})\cap \text{\rm (d)})$.
\end{enumerate}
\end{assertion}

\noindent
Clearly $((\text{PGP})\cap \text{\rm (d)})\subseteq ((\text{DGP})\cap \text{\rm (d)})$.
For this inclusion, the property~(d) is essential, as $C[0,1]\in\text{\rm (PGP)}\setminus\text{\rm (DGP)}$
by Example~\ref{DPSP but not DSP} below.
We have no example of $E\in(\text{DGP})\setminus(\text{PGP})$.
The next example shows that the property~(d) is essential 
in Assertion~\ref{DSP and (d)}(ii). 

\begin{example}\label{DPSP but not DSP}
{\em Consider the Banach lattice $E=C[0,1]$. Let $(f_n)$ be a positive \text{\rm w}$^\ast$-null sequence in $E^\ast $.
It follows from $\|f_n\|=f_n(\mathbb{1}_{[0,1]})\to 0$ that $C[0,1]\in(\text{\rm DPSP})$, 
and hence $C[0,1]\in(\text{\rm PGP})$.
Take the sequence $(r_n)$ of Rademacher functions on $[0,1]$ and define a   
disjoint \text{\rm w}$^\ast$-null sequence in $E^\ast $ as follows:
$g_n=2^{n+1}\cdot r_{2n}\cdot\mathbb{1}_{\big[\frac{1}{2^{n+1}},\frac{1}{2^n}\big]}$.
Then $(g_n)$ is a disjoint \text{\rm w}$^\ast$-null sequence in $E^\ast $, 
yet $\|g_n\|=1$ for all $n\in {\mathbb N}$.
Therefore $C[0,1]\not\in(\text{DDSP})$, and hence 
by Assertion~\ref{DSP and (d)}, 
$C[0,1]\not\in(\text{d})$.
Note that the set $\{g_n: n\in {\mathbb N}\}$ is not even relatively weakly compact. Indeed, for 
$$
  y:=\sum_{n=1}^\infty r_{2n}\cdot 
  \mathbb{1}_{\big[\frac{1}{2^{n+1}}, \frac{1}{2^n}\big]} \in
  L^\infty[0,1] \subseteq E^\ast,
$$
where $y(g_n)\equiv 1$ for all $n\in {\mathbb N}$.
It also shows $C[0,1]\not\in(\text{DGP})$.}
\end{example}

The following two definitions are permanently used below.

\begin{definition}\label{X to Y}%no
{\em A bounded operator $T: X\to Y$ is 
\begin{enumerate}[a)]
\item
{\em Dunford--Pettis} (or $T\in\text{\rm DP}(X,Y)$) 
if $T$ takes weakly null sequences to norm null ones;
\item
{\em limited} (or $T\in\text{\rm Lim}(X,Y)$) if $T(B_X)$ is a limited subset of $Y$. 
\end{enumerate}
\begin{enumerate}
\item[c)]
A bounded operator $T:E\to Y$ is 
{\em almost} $\text{\rm M}$-{\em weakly compact} (or $T\in\text{\rm aMwc}(E,Y)$)
if $f_n(Tx_n)\to 0$ for each weakly convergent $(f_n)$ in $Y'$ and disjoint bounded $(x_n)$ in $E$ \cite[Definition~2.2]{BLM18}.
\end{enumerate}
}
\end{definition}

\noindent
For instance, $\text{\rm Id}_{c_0}\in\text{\rm aMwc}(c_0)$.
The canonical embedding of $c_0$ into $\ell^\infty$ is limited, but not compact.
Note also that order intervals of a Banach lattice are not necessarily limited
(e.g., $[-{\mathbb 1},{\mathbb 1}]$ in $c$ is not limited, 
where ${\mathbb 1}$ denotes the constantly one sequence).

\begin{definition}\label{E to Y}%OK
{\em A bounded operator
\begin{enumerate}[a)]
\item 
$T:E\to Y$ is \text{\rm AM}{-compact} ($T\in\text{\rm AMc}(E,Y)$) if $T[-x,x]$ is relatively compact for every $x\in E_+$.
\item 
$T:X\to F$ is {\em semi-compact} ($T\in\text{\rm semi-K}(X,F)$) if it carries bounded subsets of $X$ onto almost order bounded subsets of $F$
(or, equivalently, for each $\varepsilon >0$, there exists $u_\varepsilon\in F_+$ with $T(B_X)\subseteq [-u_\varepsilon,u_\varepsilon]+\varepsilon B_F$).
\item  
$T:X\to F$ is $\text{\rm L}$-{\em weakly compact} ($T\in\text{\rm Lwc}(X,F)$) 
if $T$ carries bounded subsets of $X$ onto \text{\rm Lwc} subsets of $F$.
\item
$T:X\to F$ is {\em almost} $\text{\rm L}$-{\em weakly compact} ($T\in\text{\rm aLwc}(X,F)$) 
if $T$ carries relatively weakly compact subsets of $X$ onto 
$\text{\rm Lwc}$ subsets of $F$ \cite[Definition~2.1]{BLM18}
(or, equivalently, if $T(X)\subseteq F^a$ and $f_n(Tx_n)\to 0$ for each 
w-null $(x_n)$ in $X$ and disjoint $(f_n)$ in $F^\ast$).
\item
$T:X\to F$ is {\em limitedly {\rm L}-weakly compact} ($T\in l\text{\rm -Lwc}(X,F)$)
if $T$ maps limited subsets of $X$ onto  \text{\rm Lwc} subsets of $F$ (or, equivalently,
$T^\ast f_n\stackrel{w}{\to}0$ for each disjoint bounded $(f_n)$ in $F^\ast$).
\end{enumerate}}
\end{definition}

\noindent
The limitedly \text{\rm L(M)wc}-operators were defined in \cite{AEG_duality} and \cite{OM22} (in the second paper they were named weak  \text{\rm L(M)wc}-operators).
It is well known that $\text{\rm semi-K}(X,F)=\text{\rm Lwc}(X,F)$ for every $X$, whenever the norm in $F$ is \text{\rm o}-continuous.
Note that \cite[Theorem~5.63]{AlBu}  yields a method of producing many classes of operators.
For example, if we consider sets of the form $A=T(C)$ with any weakly compact 
subset $C$ of $X$ and $B=B_{F^\ast }$ then we obtain 
almost \text{\rm L(M)wc} operators \cite{BLM18}, whereas the choice of sets of the form 
$A=T[0,x]$ with any order interval $[0,x]$ in $E$ and $B=B_{F^\ast}$ 
produces order \text{\rm L(M)wc} operators \cite{BLM21}.

\medskip
In Section 2 we prove Theorem \ref{Proposition 2.2} that gives condition on bounded operators to be \text{\rm AM}-compact
and Theorem \ref{Proposition 2.4} that gives condition on bounded operators to be limited.
In Section 3, Theorem \ref{Proposition 3.12} provides conditions under which every regular operator is almost limited.
Theorem \ref{c-5.60} of Section 4 gives a tool for lattice approximation for collectively Mwc families of operators.

\medskip
For unexplained terminology and notation, we refer to \cite{AlBu,BD,AEG22,Mey91}.

%%%%%%%%%%%%%%%%%%%%
\section{The effect of \text{\rm w}$^\ast$-continuity of lattice operations in the dual}
%%%%%%%%%%%%%%%%%%%%

In this section, we study how order continuity of norm in $E^\ast$ and continuity of lattice operations
in $E^\ast$ (which imply order intervals being limited in $E$ \cite{EMM13}) effect operators.
The limited order intervals will play an important role in what follows. 
So let us recall that lattice operations are weakly continuous if, for every w-null $(x_n)$ in $E$, 
$|x_n|\stackrel{w}{\to}0$. 
%Lattice operations in $E^\ast$ are weak$^\ast$ continuous
%if $f_n\stackrel{w^\ast}{\to}0$  in $E^\ast$ implies $|f_n|\stackrel{w^\ast}{\to}0$. 
We need the following lemma~\cite[Proposition~2.3\,(1)]{EMM13}.

\begin{lemma}\label{w*cont lo in E}
Let $E$ be a Banach lattice.  The lattice operations of $E^\ast$ are weak$^\ast$ sequentially continuous if and only
 if every almost order bounded subset of $E$ is limited.
 \end{lemma}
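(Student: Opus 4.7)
The plan is to prove both directions by exploiting the standard identity
$$
\sup_{x\in[-u,u]}|f(x)|=|f|(u) \qquad (u\in E_+,\ f\in E^\ast),
$$
which translates statements about uniform convergence of functionals on an order interval into statements about $w^\ast$-convergence of the modulus sequence.

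\smallskip\noindent
\textbf{Forward implication.} Assume the lattice operations of $E^\ast$ are $w^\ast$-sequentially continuous. First I would show that every order interval $[-u,u]$ in $E$ is limited: given any $w^\ast$-null $(f_n)\subseteq E^\ast$, the hypothesis gives $|f_n|\stackrel{w^\ast}{\to}0$, hence $|f_n|(u)\to 0$, and by the identity above $\sup_{x\in[-u,u]}|f_n(x)|\to 0$, which is precisely the definition of $[-u,u]$ being limited. Now let $C$ be an arbitrary almost order bounded subset of $E$. For each $\varepsilon>0$ pick $u_\varepsilon\in E_+$ with $C\subseteq[-u_\varepsilon,u_\varepsilon]+\varepsilon B_E$. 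Since $[-u_\varepsilon,u_\varepsilon]$ is limited by the previous step, Lemma~\ref{Lemma 3}(iii) yields that $C$ is limited.

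\smallskip\noindent
\textbf{Reverse implication.} Assume every almost order bounded subset of $E$ is limited, and let $(f_n)\subseteq E^\ast$ be $w^\ast$-null. Fix $u\in E_+$. The order interval $[-u,u]$ is (trivially) almost order bounded, so by assumption it is limited. Therefore $\sup_{x\in[-u,u]}|f_n(x)|\to 0$, and invoking the identity again gives $|f_n|(u)\to 0$. For a general $x\in E$, writing $x=x^+-x^-$ yields
$$
|f_n|(x)=|f_n|(x^+)-|f_n|(x^-)\to 0,
$$
so $|f_n|\stackrel{w^\ast}{\to}0$. This is the defining condition for the lattice operations of $E^\ast$ to be $w^\ast$-sequentially continuous.

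\smallskip\noindent
\textbf{Main obstacle.} Neither direction is technically deep; both hinge on the identity relating the supremum of $|f|$ on $[-u,u]$ to $|f|(u)$, which is a standard fact about linear functionals on vector lattices, together with the (already granted) stability Lemma~\ref{Lemma 3}(iii). The only point that requires a moment of care is passing from $|f_n|(u)\to 0$ on $E_+$ to $w^\ast$-convergence on all of $E$, which is handled by the Jordan decomposition $x=x^+-x^-$.
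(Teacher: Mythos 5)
Your proof is correct. The paper does not prove this lemma itself but quotes it from \cite[Proposition~2.3\,(1)]{EMM13}, and your argument via the Riesz--Kantorovich identity $\sup_{x\in[-u,u]}|f(x)|=|f|(u)$, combined with Lemma~\ref{Lemma 3}(iii) to pass from order intervals to general almost order bounded sets, is exactly the standard route taken in that reference; the one delicate point (recovering $|f_n|\stackrel{w^\ast}{\to}0$ on all of $E$ from convergence on $E_+$) is handled correctly by the decomposition $x=x^+-x^-$.
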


\noindent
We begin with a useful summary.

\begin{proposition}\label{Proposition 2.1}%OK
Let $T\in\text{\rm semi-K}(X,F)$. 
\begin{enumerate}[\em (i)]
\item
If $F$ has limited order intervals, then $T\in\text{\rm Lim}(X,F)$.
\item
If $F$ has compact order intervals, then $T\in\text{\rm K}(X,F)$.
\item
If $F$ has \text{\rm o}-continuous norm, then $T\in\text{\rm W}(X,F)$.
\end{enumerate}
\end{proposition}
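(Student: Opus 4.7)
The plan is to peel off the definition of semi-compactness and feed the result into the Grothendieck-type approximation Lemma~\ref{Lemma 3}. Since $T\in\text{\rm semi-K}(X,F)$, for every $\varepsilon>0$ we can pick $u_\varepsilon\in F_+$ with
$$
  T(B_X)\subseteq[-u_\varepsilon,u_\varepsilon]+\varepsilon B_F.
$$
All three parts then reduce to verifying that the approximating set $[-u_\varepsilon,u_\varepsilon]$ has the property we want to transfer to $T(B_X)$, and applying the corresponding clause of Lemma~\ref{Lemma 3}.

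For (i), the hypothesis directly says that $[-u_\varepsilon,u_\varepsilon]$ is limited in $F$, so clause~(iii) of Lemma~\ref{Lemma 3} yields that $T(B_X)$ is limited, i.e.\ $T\in\text{\rm Lim}(X,F)$. For (ii), the hypothesis makes $[-u_\varepsilon,u_\varepsilon]$ relatively compact, and clause~(i) of Lemma~\ref{Lemma 3} then gives $T\in\text{\rm K}(X,F)$. For (iii), I would invoke the classical fact that in a Banach lattice with order continuous norm every order interval is weakly compact (\cite[Theorem~4.9]{AlBu}); thus $[-u_\varepsilon,u_\varepsilon]$ is relatively weakly compact and clause~(ii) of Lemma~\ref{Lemma 3} delivers $T\in\text{\rm W}(X,F)$.

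No step is really an obstacle: the whole argument is a clean, three-fold application of the same pattern once semi-compactness is unfolded. The only non-trivial input is the standard lattice fact used in (iii), namely that order continuity of the norm forces order intervals to be weakly compact; everything else is already packaged in Lemma~\ref{Lemma 3}. I would therefore write the proof as a single short paragraph exhibiting the approximation $T(B_X)\subseteq[-u_\varepsilon,u_\varepsilon]+\varepsilon B_F$ and citing the three clauses of Lemma~\ref{Lemma 3} in parallel.
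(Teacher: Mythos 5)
Your proof is correct and follows exactly the paper's argument: unfold semi-compactness to get $T(B_X)\subseteq[-u_\varepsilon,u_\varepsilon]+\varepsilon B_F$ and apply the appropriate clause of Lemma~\ref{Lemma 3}, using for (iii) the standard fact that order continuity of the norm makes order intervals weakly compact. Nothing to add.
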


\begin{proof}
(i) \ 
We need to show that $T(B_X)$ is limited. 
Since $T\in\text{\rm semi-K}(X,F)$ then, for every $\varepsilon > 0$,
there exist $u_\varepsilon\in F_+$ with $T(B_X)\subseteq[-u_\varepsilon,u_\varepsilon]+\varepsilon B_F$. 
As each order interval in $F$ is limited, $T(B_X)$ is limited by Lemma~\ref{Lemma 3}(iii).

\medskip
(ii) \ 
Similar to (i).

\medskip
(iii) \ 
Since $F^a=F$, the order intervals in $F$ are weakly compact.
The rest of the proof is similar to (i).
\end{proof}

\begin{corollary}\label{Corollary 2.2.5}%no but OK
Let $E^\ast$ be a \text{\rm KB}-space and let $F$ have limited order intervals. The following hold.
\begin{enumerate}[{\em (i)}]
\item 
$\text{\rm DP}_+(E,F) \subseteq\text{\rm Lim}(E,F)$.
\item
If $E\in \text{\rm (DPP)}$, then $\text{\rm W}_+(E,F)\subseteq \text{\rm Lim}(E,F)$.
\end{enumerate}
\end{corollary}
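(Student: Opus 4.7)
\noindent The plan is to reduce both parts to Proposition~\ref{Proposition 2.1}(i): since $F$ is assumed to have limited order intervals, it will suffice to show that $T \in \text{\rm semi-K}(E, F)$, i.e.\ that $T(B_E)$ is almost order bounded in $F$. Part (ii) then follows from (i) via the standard Dunford--Pettis property characterization.

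For part (i), let $T \in \text{\rm DP}_+(E, F)$. Because $E^\ast$ is a \text{\rm KB}-space, Lemma~\ref{E' is o-cont} guarantees that every disjoint bounded sequence $(x_n) \subseteq E$ is weakly null; since $T$ is Dunford--Pettis this forces $\|Tx_n\|_F \to 0$, so $T$ is M-weakly compact. I would then invoke the classical Aliprantis--Burkinshaw/Meyer-Nieberg fact that a \emph{positive} M-weakly compact operator between Banach lattices is automatically L-weakly compact (see \cite{AlBu,Mey91}); in particular $T(B_E)$ is an L-weakly compact subset of $F$, hence almost order bounded, giving $T \in \text{\rm semi-K}(E, F)$. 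Proposition~\ref{Proposition 2.1}(i) then delivers $T \in \text{\rm Lim}(E, F)$.

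Part (ii) follows from (i) together with the standard equivalent formulation of $\text{\rm (DPP)}$: $E \in \text{\rm (DPP)}$ iff every weakly compact operator from $E$ into any Banach space is Dunford--Pettis. Combined with positivity, this yields $\text{\rm W}_+(E, F) \subseteq \text{\rm DP}_+(E, F)$, so (i) finishes the argument.

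The principal technical obstacle is the implication that positive M-weak compactness implies L-weak compactness. If one prefers an in-paper argument rather than a citation, the natural approach is by contradiction: if $T(B_E)$ were not L-weakly compact in $F$, one could extract a disjoint sequence $(y_n) \subseteq \sol(T(B_E))$ with $\|y_n\| \not\to 0$, write $|y_n| \leq T|x_n|$ with $x_n \in B_E$ (using positivity of $T$), and then employ a Dodds--Fremlin disjoint-lifting argument to produce a disjoint bounded sequence in $E$ whose image under $T$ is not norm-null, contradicting the M-weak compactness established above.
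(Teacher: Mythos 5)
Your overall strategy---reduce both parts to Proposition~\ref{Proposition 2.1}(i) by showing $T\in\text{\rm semi-K}(E,F)$, and deduce (ii) from (i) via the characterization of $\text{\rm (DPP)}$ through weakly compact operators---is exactly the paper's, and your direct derivation that $T\in\text{\rm Mwc}(E,F)$ (disjoint bounded sequences in $E$ are weakly null by Lemma~\ref{E' is o-cont} since $E^\ast$ is a KB-space, and $T$ is Dunford--Pettis) is correct; the paper simply cites Wnuk for this step, and handles (ii) the same way you do, via \cite[Theorem~5.82]{AlBu}.

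The gap is the ``classical fact'' you invoke to pass from M-weak compactness to almost order boundedness: a positive M-weakly compact operator need \emph{not} be L-weakly compact. Every Lwc subset of $F$ lies in $F^a$ (as recalled in the paper after Definition~\ref{limited subset}), so any Lwc operator maps into $F^a$. Now take $T:\ell^2\to\ell^\infty$, $Tx=f(x)\mathbb{1}$ with $f$ a nonzero positive functional on $\ell^2$ and $\mathbb{1}$ the constantly one sequence: $T$ is positive and M-weakly compact (disjoint bounded sequences in $\ell^2$ are weakly null and $f$ is weakly continuous), yet $T(B_{\ell^2})\not\subseteq c_0=(\ell^\infty)^a$, so $T$ is not L-weakly compact. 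For the same reason the Dodds--Fremlin lifting argument you sketch as a fallback cannot succeed: the statement it is meant to prove is false. What you actually need---and what the paper uses---is that a positive (more generally, regular or order bounded) M-weakly compact operator is \emph{semi-compact} (\cite[Theorem~5.72]{AlBu}); this gives that $T(B_E)$ is almost order bounded directly, without any detour through L-weak compactness, and the remainder of your argument, including part (ii), then goes through unchanged.
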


\begin{proof}
(i) \ 
Let $T\in \text{\rm DP}_+(E,F)$. By \cite[Theorem~3.7.10]{Wnuk90}, 
$T\in\text{\rm Mwc}(E,F)$. Thus $T\in\text{\rm semi-K}(E,F)$, and hence
$T\in\text{\rm Lim}(E,F)$ by Proposition~\ref{Proposition 2.1}(i). 

\medskip 
(ii)\ 
Let $T\in \text{\rm W}_+(E,F)$. Then $T\in \text{\rm DP}(E,F)$ by 
%the Brace~-- Gro\-then\-dieck theorem 
\cite[Theorem~5.82]{AlBu}. The rest of the proof is as in~(i).
\end{proof}

\medskip\noindent
Since L-weakly compact, regular M-weakly compact, and positive operators dominated by semi-compact operators 
are semi-compact~(\cite[Theorem~5.71, Theorem~5.72]{AlBu}), they are (almost) limited 
independent of their ranges due to Proposition~\ref{Proposition 2.1}.
A semi-compact operator need not be compact, weakly compact, L- or M-weakly compact. 
The identity operator on $\ell^\infty$ is  semi-compact, but it has 
none of the compactness properties mentioned.

\begin{corollary}\label{Corollary 2.1.1}%OK
Let $F$ have \text{\rm o}-continuous norm and limited order intervals. Then, for every $E$ 
\begin{enumerate}[\em (i)]
\item
each semi-compact operator $T:E\to F$ is compact$;$
\item
each order bounded \text{\rm Mwc} operator  $T:E\to F$ is compact$;$
\item
each \text{\rm Lwc} operator  $T:E\to F$ is compact$;$
\item
if $E^\ast$ has \text{\rm o}-continuous norm, then each order bounded \text{\rm aDP} operator $T:E\to F$ is compact.
\end{enumerate}
\end{corollary}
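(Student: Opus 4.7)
The plan centers on a key preliminary fact: under the hypotheses on $F$, every order interval of $F$ is norm compact. Indeed, each $[-u,u]$ is weakly compact by o-continuity of the norm, and limited by assumption; and any limited, relatively weakly compact subset of a Banach space is relatively norm compact. To verify this last assertion, I would take a sequence in such a set, extract a weakly convergent subsequence with limit $x$, and set $y_k=x_{n_k}-x$: translation preserves the limited property and $y_k\to 0$ weakly. If $\|y_k\|\geq\delta>0$ along some subsequence, pick $f_k\in B_{F^\ast}$ with $f_k(y_k)\geq\delta/2$ and, using Alaoglu, extract a $\text{w}^\ast$-cluster point $f$; then $(f_k-f)$ is $\text{w}^\ast$-null and hence uniformly null on the limited set $\{y_k\}$, so $f_k(y_k)-f(y_k)\to 0$, while $f(y_k)\to 0$ by the weak convergence of $y_k$, a contradiction.

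With this preliminary in hand, statement (i) is exactly Proposition~\ref{Proposition 2.1}(ii): for semi-compact $T$ and each $\varepsilon>0$, $T(B_E)\subseteq[-u_\varepsilon,u_\varepsilon]+\varepsilon B_F$, and Lemma~\ref{Lemma 3}(i) yields $T\in\text{\rm K}(E,F)$ since the approximating order intervals are now known to be compact. Statement (iii) reduces to (i) by a standard Meyer--Nieberg characterization: every Lwc subset of $F$ is almost order bounded (and in $F^a$), so $T\in\text{\rm Lwc}(E,F)$ forces $T\in\text{\rm semi-K}(E,F)$. For (ii), the route is to show that an order bounded Mwc operator into an o-continuous Banach lattice is L-weakly compact: order boundedness of $T$ together with compactness of order intervals in $F$ makes $T$ AM-compact, and the Mwc property applied to a suitable disjoint decomposition of a bounded sequence in $E$ then upgrades $T(B_E)$ to an Lwc set, at which point (iii) closes the case.

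Finally, (iv) reduces to (ii) through Lemma~\ref{E' is o-cont}: when $E^\ast$ has o-continuous norm, every disjoint bounded sequence in $E$ is weakly null, so on such sequences the aDP condition coincides with the Mwc condition. Hence an order bounded aDP operator is an order bounded Mwc operator, and (ii) applies. The main obstacle is the argument for (ii), where one must genuinely exploit the interplay between the order boundedness of $T$ and the o-continuity of the norm on $F$ to pass from the Mwc condition---which a priori controls only the images of disjoint bounded sequences---to a property (semi-compactness or L-weak compactness) that controls the whole image $T(B_E)$; the rest of the corollary is a fairly clean bookkeeping around Proposition~\ref{Proposition 2.1} once the compactness of order intervals is established.
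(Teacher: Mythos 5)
Your reduction scheme (everything funnels into (i) via ``semi-compact'', with (iii) and (iv) handled by standard implications) matches the paper's, and your treatment of (iii) and (iv) is essentially the paper's argument. But the engine of your proof --- the ``key preliminary fact'' that a limited, relatively weakly compact subset of a Banach space is relatively norm compact --- is false in general, and your argument for it contains the error that causes the failure. In $\ell^\infty$ the set $\{e_n:n\in\mathbb{N}\}\cup\{0\}$ is limited (it sits inside $B_{c_0}$, which is limited in $\ell^\infty$ by Phillips' lemma, as the paper itself recalls) and is weakly compact (for every $\mu\in ba(\mathbb{N})=(\ell^\infty)^\ast$ one has $\sum_n|\mu(\{n\})|\le\|\mu\|<\infty$, so $e_n\stackrel{w}{\to}0$), yet $\|e_n-e_m\|_\infty=1$, so it is not relatively compact. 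The precise breakdown in your argument is the step ``extract a $w^\ast$-cluster point $f$; then $(f_k-f)$ is $w^\ast$-null'': a cluster point is not a limit, and $B_{F^\ast}$ need not be $w^\ast$-sequentially compact, so no $w^\ast$-convergent subsequence of $(f_k)$ need exist (for $f_k=e_k^\ast$ in $(\ell^\infty)^\ast$ none does). Consequently you have not shown that order intervals of $F$ are compact, and Proposition~\ref{Proposition 2.1}(ii) is not available to you.

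The conclusion you want (order intervals of $F$, indeed all limited subsets of $F$, are relatively compact) is nevertheless true under the stated hypotheses, but the correct mechanism is the one the paper uses: $F=F^a$ forces $F$ to be Dedekind $\sigma$-complete, and by Buhvalov's theorem a Dedekind $\sigma$-complete Banach lattice with order continuous norm has the Gelfand--Phillips property; the paper then applies this directly to the limited set $T(B_E)$ supplied by Proposition~\ref{Proposition 2.1}(i), bypassing compact order intervals altogether. You should replace your preliminary lemma by this citation. A secondary issue: in (ii) you only sketch the passage from ``order bounded Mwc'' to something controlling all of $T(B_E)$ and flag it as the main obstacle, whereas this is a standard fact --- an order bounded M-weakly compact operator is semi-compact (\cite[Theorem~5.72]{AlBu}) --- which is exactly how the paper disposes of (ii) in one line; similarly (iii) is \cite[Theorem~5.71]{AlBu}. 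With those two repairs your outline coincides with the paper's proof.
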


\begin{proof}
(i)\ 
Let $T\in\text{\rm semi-K}(E,F)$. By Proposition~\ref{Proposition 2.1}, $T\in\text{\rm Lim}(E,F)$, and
so $T(B_E)$ is limited. As $F^a=F$, then $F$ is Dedekind $\sigma$-complete.
Then $F\in\text{\rm (GPP)}$ by~\cite{Buh}. %by Assertion~\ref{Bukhv-prop}. 
Thus $T(B_E)$ is relatively compact, so $T\in\text{\rm K}(E,F)$.

\medskip
(ii) \ 
This follows from (i) in view of \cite[Theorem~5.72]{AlBu}.

\medskip
(iii) \ 
This follows from (i) in view of \cite[Theorem~5.71]{AlBu}.

\medskip
(iv) \ 
If $E^\ast$ is a \text{\rm KB}-space then, by \cite[Proposition~2.2]{AqEl11}, $T\in\text{\rm aDP}(E,F)$ and is order bounded.
Hence $T\in\text{\rm Mwc}(E,F)$ and therefore $T\in\text{\rm semi-K}(E,F)$. By (i),   $T\in\text{\rm K}(E,F)$.
\end{proof}

\medskip\noindent
By \cite[Proposition~2.2]{AqEl11}, if $E^\ast$ is a \text{\rm KB}-space, 
$T\in\text{\rm aDP}(E,F)$ and is order bounded, then $T\in\text{\rm semi-K}(E,F)$. 
Thus it follows easily that if, additionally, $S\in\text{\rm AMc}(F,G)$
then $ST\in\text{\rm K}(E,G)$. Note that not every weakly compact operator is \text{\rm AM}-compact,
e.g., the identity operator $I_{L^2[0,1]}$ is weakly compact but it is not \text{\rm AM}-compact.

\medskip
\noindent
An operator $T:E\to Y$ is called 
\begin{enumerate}[a)]
\item
{\em order limited} (or $T\in \text{\rm o-Lim}(E,Y)$) 
if $T[0,x]$ is limited  for each $x\in E_+$;
\item
{\em order weakly compact} (or $T\in \text{\rm owc}(E,Y)$) 
if $T[0,x]$ is relatively weakly compact for each $x\in E_+$.
\end{enumerate}

\begin{theorem}\label{Proposition 2.2}%OKKK
Let all order intervals in $E$ be limited. The following holds$:$
\begin{enumerate}[{\em (i)}]
\item
$\text{\rm W}(E,Y)\subseteq \text{\rm AMc}(E,Y)$ for all $Y$.
\item
$\text{\rm owc}(E,Y)\subseteq\text{\rm AMc}(E,Y)$  for all $Y$.
\item
If $E^\ast$ and $F$ both have $\text{\rm o}$-continuous norms, then 
$\text{\rm L}_+(E,F)\subseteq \text{\rm AMc}(E,F)$.
\item
If $E^\ast$ is a $\text{\rm KB}$-space and  $Y$ does not contain $c_0$, then 
$\text{\rm L}(E,Y)\subseteq \text{\rm AMc}(E,Y)$.
\end{enumerate}
\end{theorem}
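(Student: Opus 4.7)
The common mechanism across the four parts is to show $T[-x,x]$ is relatively norm compact in $Y$ for each $x\in E_+$. By the standing hypothesis, $[-x,x]$ is limited in $E$, and since the adjoint of a bounded operator carries $w^\ast$-null sequences in $Y^\ast$ to $w^\ast$-null sequences in $E^\ast$, the image $T[-x,x]$ is limited in $Y$ for every bounded $T$. In each part the additional hypothesis is what promotes this limitedness to relative norm compactness.

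For (i), I would factor the weakly compact $T$ as $T=SR$ through a reflexive Banach space $Z$ via Davis--Figiel--Johnson--Pe{\l}czy\'nski. The image $R[-x,x]$ is limited in $Z$, and since the reflexive $Z$ belongs to $(\text{\rm GPP})$, this limited set is relatively compact in $Z$; continuity of $S$ transfers this to $T[-x,x]$ relatively compact in $Y$. For (iii), order continuity of the norm on $F$ makes $F$ Dedekind $\sigma$-complete, and by Buhvalov's result cited in the paper, $F\in(\text{\rm GPP})$, so the limited set $T[-x,x]$ in $F$ is directly relatively compact. Part (ii) combines these ideas: the inclusion $T[-x,x]\subseteq T[0,x]-T[0,x]$ reduces the task to showing $T[0,x]$ relatively compact; the set $T[0,x]$ is relatively weakly compact (as $T\in\text{\rm owc}(E,Y)$) and limited (image of the limited $[0,x]$), and applying the DFJP-style interpolation to the weakly compact set $\overline{\mathrm{conv}}(T[0,x])$ (weakly compact by Krein) embeds it in the unit ball of a reflexive $Z\hookrightarrow Y$, whose $(\text{\rm GPP})$ then delivers relative compactness of $T[0,x]$ in $Z$, hence in $Y$.

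Part (iv) is the main obstacle, since we have neither weak compactness of $T$, positivity, nor lattice structure on $Y$ at our disposal. The plan is first to establish relative weak compactness of $T[-x,x]$ via a lattice splitting, and then to upgrade "limited $+$ relatively weakly compact" to "relatively compact" using the absence of $c_0$ in $Y$. Given $(y_n)\subseteq[-x,x]$, extract a subsequence and decompose $y_n=u_n+v_n$ with $(v_n)$ pairwise disjoint in $E$ and $(u_n)$ lying in a fixed relatively weakly compact subset of $E$ (the standard disjoint/equi-order-bounded decomposition available inside an order interval). Since $E^\ast$ is a KB-space, Lemma~\ref{E' is o-cont} forces the disjoint $(v_n)$ to be weakly null in $E$, so $Tv_n\to 0$ weakly in $Y$; continuity of $T$ keeps $(Tu_n)$ inside a relatively weakly compact subset of $Y$, so a further subsequence of $(Ty_n)$ weakly converges, making $T[-x,x]$ relatively weakly compact. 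The absence of $c_0$ in $Y$ then rules out the $\ell^\infty$-style obstruction exemplified by $\{e_n\}\subseteq\ell^\infty$ (limited and relatively weakly compact but not relatively compact) and upgrades the conclusion to relative norm compactness. The main technical hurdle is executing the lattice splitting with both parts behaving as stated and securing the final upgrade in the absence of $c_0$.
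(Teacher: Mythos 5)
Your reduction to showing that $T[-x,x]$ is relatively compact, and your observation that $T[-x,x]$ is automatically limited in the target (since $T^\ast$ is $\text{\rm w}^\ast$-to-$\text{\rm w}^\ast$ continuous), are both correct and are exactly the paper's starting point. Part (i) coincides with the paper's proof (Davis--Figiel--Johnson--Pe{\l}czy\'nski factorization through a reflexive $Z$, which is in $(\text{\rm GPP})$). Your part (iii) is correct and in fact \emph{simpler and more general} than the paper's: since $F$ has $\text{\rm o}$-continuous norm it is Dedekind complete, hence $F\in(\text{\rm GPP})$ by Buhvalov's theorem, and the limited set $T[-x,x]$ is then directly relatively compact --- no factorization, no positivity of $T$, and no hypothesis on $E^\ast$ are needed. (The paper instead factors a positive $T$ through a lattice with $\text{\rm o}$-continuous norm, which is why it carries the extra hypotheses.)

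Parts (ii) and (iv), however, contain genuine gaps. In (ii), applying the DFJP interpolation to the \emph{set} $\overline{\mathrm{conv}}(T[0,x])$ produces a reflexive $Z$ and a bounded injection $J:Z\to Y$ with $T[0,x]\subseteq J(B_Z)$, but limitedness does not lift along $J$: to see that $J^{-1}(T[0,x])$ is limited in $Z$ you must test it against \emph{all} weakly null sequences in $Z^\ast$, whereas the limitedness of $T[0,x]$ in $Y$ only controls the sequences of the form $J^\ast g_n$ with $(g_n)$ $\text{\rm w}^\ast$-null in $Y^\ast$, and $J^\ast$ is not surjective. The set-level principle you are implicitly invoking --- ``limited $+$ relatively weakly compact $\Rightarrow$ relatively compact'' --- is false in general: $\{e_n\}\subseteq B_{c_0}\subseteq\ell^\infty$ is limited and weakly null in $\ell^\infty$ but not relatively compact. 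What is needed is an actual \emph{operator} into a $(\text{\rm GPP})$ space, so that the forward image of the limited interval is limited there; the paper gets this from the factorization theorem for order weakly compact operators (\cite[Theorem~5.58]{AlBu}): $T=SQ$ with $Q:E\to F$ a lattice homomorphism into a Banach lattice $F$ with $\text{\rm o}$-continuous norm, after which $Q[-x,x]$ is limited in $F\in(\text{\rm GPP})$.

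In (iv) both of your key steps are unsubstantiated. The decomposition of an arbitrary sequence in $[-x,x]$ as $u_n+v_n$ with $(v_n)$ disjoint and $(u_n)$ confined to a fixed relatively weakly compact set is not a standard available fact; combined with $E^\ast$ being a $\text{\rm KB}$-space it would make every order interval of $E$ weakly compact, i.e.\ force $E$ to have $\text{\rm o}$-continuous norm, which is not among the hypotheses and which you do not establish. And the final ``upgrade'' --- limited $+$ relatively weakly compact $+$ ($c_0\not\subseteq Y$) $\Rightarrow$ relatively compact --- is precisely the assertion that limited weakly null sequences in $Y$ are norm null, i.e.\ that $Y\in(\text{\rm GPP})$; this is not a known consequence of $Y$ not containing $c_0$, and you give no argument for it. The tool you are missing is the factorization theorem of \cite[Exercise~19, p.~315]{AlBu}: when $E^\ast$ is a $\text{\rm KB}$-space and $Y$ contains no copy of $c_0$, \emph{every} bounded $T:E\to Y$ factors through a reflexive Banach lattice, and then the argument of part (i) applies verbatim.
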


\begin{proof}
(i) \ 
Let $T\in\text{\rm W}(E,Y)$, and let $[-x,x]\subseteq E$.  
By~\cite[Theorem~5.38]{AlBu},
$T$ factors over a reflexive $Z$ as $T = SR$ with  $R\in \text{\rm L}(E,Z)$ 
and $S\in \text{\rm L}(Z,Y)$.
The set $R[-x,x]$ is limited in $Z$ as $[-x,x]$ is limited in $E$. 
Since $Z\in(\text{\rm GPP})$ by~\cite{BD},   
$R[-x,x]$ is relatively compact in $Z$, and hence $SR[-x,x]$ 
is  relatively compact in $Y$. Thus, $T\in\text{\rm AMc}(E,Y)$.

\medskip
(ii) \ 
Let $T\in\text{\rm owc}(E,Y)$ and let $[-x,x]\subseteq E$. 
Then $T$ admits a factorization  over some  $F=F^a$ as $T = SQ$, 
where $Q: E\to F$ is a lattice homomorphism and $S\in \text{\rm L}(F,Y)$  
by \cite[Theorem~5.58]{AlBu}. 
The set $Q[-x,x]$ is limited as $[-x,x]$ is limited. 
Since $F$ has \text{\rm o}-continuous norm, 
it is Dedekind $\sigma$-complete, hence $F\in(\text{\rm GPP})$ by~\cite{Buh}. 
Therefore $Q[-x,x]$ is relatively compact in $F$, and hence $SQ[-x,x]$ is relatively compact  in $Y$. 
Thus $T\in\text{\rm AMc}(E,Y)$.

\medskip
(iii) \ 
By \cite[Exercise~15, p.\,314]{AlBu}, $T$ has a factorization $T = SR$ over a Banach lattice $G$
with \text{\rm o}-continuous norm. Let $[a, b]\subseteq E$. Then $R[a, b]$ is limited in $G$. 
Since $G\in(\text{\rm GPP})$ (by~\cite{Buh}), $R[a, b]$ is relatively compact in $G$,  
and hence $SR[a, b]$ is relatively compact in $F$. 
Since the interval $[a, b]$ is arbitrary, $T\in  \text{\rm AMc}(E,F)$.

\medskip
(iv) \ 
By \cite[Exercise.\,19, p.\,315]{AlBu}, the operator $T$ admits a factorization over a reflexive Banach lattice $G$ as 
$T = SR$, where $R:E\to G$ and $S:G\to Y$ are bounded. The rest of the proof is similar to proof of~(i) and is omitted.
\end{proof}

\begin{corollary}\label{Corollary 2.2.2 +2.2.3}%no
Let $E$ have limited order intervals, and let $F$ have \text{\rm o}-continuous norm. Then the following holds.
\begin{enumerate}[\em (i)]
\item
$\text{\rm semi-K}_+(E,F)\subseteq\text{\rm AMc}(E,F)$.
\item
$\text{\rm Lwc}(E,F)$ and $\text{\rm Mwc}(E,F)$ are $\text{\rm AMc}(E,F)$.
\item
Let $T:E\to F$ have \text{\rm o}-continuous operator norm. Then $T\in\text{\rm AMc}(E,F)$.
\end{enumerate}
\end{corollary}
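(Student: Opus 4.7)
The plan is to unify the three items via a single observation: under the stated hypotheses every bounded operator $T : E \to F$ automatically sends order intervals in $E$ to relatively compact subsets of $F$, so (i)--(iii) all drop out as special cases.

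Fix $x \in E_+$. By the hypothesis on $E$, the interval $[-x,x]$ is limited. Limitedness is preserved by bounded linear maps: if $(f_n)$ is \text{\rm w}$^\ast$-null in $F^\ast$, then $(T^\ast f_n)$ is \text{\rm w}$^\ast$-null in $E^\ast$, hence uniformly null on $[-x,x]$, so $(f_n)$ is uniformly null on $T[-x,x]$. Therefore $T[-x,x]$ is a limited subset of $F$.

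Next, because $F^a = F$, the lattice $F$ is Dedekind $\sigma$-complete, and so $F \in (\text{\rm GPP})$ by~\cite{Buh}. Consequently the limited set $T[-x,x]$ is relatively compact in $F$, and since $x \in E_+$ was arbitrary, $T \in \text{\rm AMc}(E,F)$. This single argument delivers (i), (ii), and (iii) at once, since in each item $T$ is bounded; the extra structural properties (positivity plus semi-compactness, being Lwc or Mwc, or having o-continuous operator norm) are not actually needed for the AM-compactness conclusion itself but only serve to identify the classes of operators to which the corollary is typically applied.

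Should one prefer to stay closer to the factorization style of Theorem~\ref{Proposition 2.2}, an alternative route for (i)--(ii) is to note that $F^a = F$ makes order intervals in $F$ weakly compact, so by Lemma~\ref{Lemma 3}(ii) any $T \in \text{\rm semi-K}(E,F)$---which by the already-mentioned equality $\text{\rm semi-K}(E,F)=\text{\rm Lwc}(E,F)$ in the o-continuous case also covers $\text{\rm Lwc}(E,F)$, and, via regularity, the Mwc part---lies in $\text{\rm W}(E,F)$, and then Theorem~\ref{Proposition 2.2}(i) yields $T \in \text{\rm AMc}(E,F)$. I do not anticipate a genuine obstacle; the only point meriting a line of care is the standard implication $F^a = F \Rightarrow F$ is Dedekind $\sigma$-complete (needed to invoke~\cite{Buh}), and the interpretation of ``o-continuous operator norm'' in (iii), but as any such $T$ is bounded the direct limited-plus-(GPP) argument applies verbatim.
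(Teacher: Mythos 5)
Your proof is correct, but it takes a genuinely different and in fact stronger route than the paper's. The paper proves each item by first showing $T$ is weakly compact --- using $F^a=F$ to make the intervals $[-u_\varepsilon,u_\varepsilon]$ weakly compact and then Lemma~\ref{Lemma 3}(ii) for (i), the fact that \text{\rm Lwc} and \text{\rm Mwc} operators are weakly compact for (ii), and \cite[Theorem~5.68]{AlBu} to reduce (iii) to (ii) --- and then invokes Theorem~\ref{Proposition 2.2}(i), whose proof rests on factorization through a reflexive space and the \text{\rm (GPP)} of reflexive spaces. You instead observe that a bounded operator carries limited sets to limited sets, so $T[-x,x]$ is limited in $F$, and that $F^a=F$ forces $F$ to be Dedekind ($\sigma$-)complete and hence $F\in(\text{\rm GPP})$ by \cite{Buh} --- exactly the chain the paper itself uses in Corollary~\ref{Corollary 2.1.1}(i) and Theorem~\ref{prp2.15}(i). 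This yields $\text{\rm L}(E,F)\subseteq\text{\rm AMc}(E,F)$ outright, which subsumes all three items (and, incidentally, makes the positivity in (i) and the extra hypotheses superfluous); what you lose relative to the paper's argument is nothing for this corollary, though the paper's factorization machinery is still needed in Theorem~\ref{Proposition 2.2}(i)--(ii), where the codomain is a general Banach space and \text{\rm (GPP)} of the target is unavailable. One small caution on your secondary ``alternative route'': justifying the \text{\rm Mwc} part of (ii) ``via regularity'' is not quite right, since \text{\rm Mwc} operators need not be regular; the clean statement is simply that \text{\rm M}-weakly compact operators are weakly compact, which is what the paper uses. Your primary argument does not need this point at all.
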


\begin{proof}
(i) \ 
As $T\in\text{\rm semi-K}_+(E,F)$, for each $\varepsilon > 0$, we can find 
$u_\varepsilon\in F_+$ such that $T(B_E)\subseteq [-u_\varepsilon,u_\varepsilon] +\varepsilon B_F$. 
Since $F$ has \text{\rm o}-continuous norm, the interval $[-u_\varepsilon,u_\varepsilon]$ is weakly compact. 
Consequently $T\in\text{\rm W}(E,F)$ 
by Lemma~\ref{Lemma 3}(ii), and $T\in\text{\rm AMc}(E,F)$ due to Proposition~\ref{Proposition 2.2}.

\medskip
(ii) \ 
It follows from (i), as L- and M-weakly compact operators are weakly compact.

\medskip
(iii) \ 
By \cite[Theorem~5.68]{AlBu},  $T\in\text{\rm Lwc}(E,F)$ \cite[Theorem~5.68]{AlBu}.
The rest follows from~(ii). 
\end{proof}

%\noindent
%The following result gives some conditions on a weakly compact operator to be compact.

\begin{corollary}\label{Proposition 2.3}%no
Suppose $E^\ast$ is a \text{\rm KB}-space, $E$ has limited order intervals, 
and $F\in (\text{\rm PSP})$. Then $\text{\rm W}_+(E,F) \subseteq \text{\rm K}(E,F)$.
\end{corollary}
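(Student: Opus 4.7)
The plan is to combine M-weak compactness of $T$ (inherited from $E^\ast$ being KB together with $F\in(\text{\rm PSP})$ and positivity) with AM-compactness of $T$ (given by Theorem~\ref{Proposition 2.2}(i) since $E$ has limited order intervals and $T$ is weakly compact), and then to invoke the classical ``uniform order tail'' characterization of M-weak compactness to approximate $T(B_E)$ by $T[-u_\varepsilon,u_\varepsilon]$ up to an $\varepsilon$-ball.

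First I would verify $T\in\text{\rm Mwc}(E,F)$. For any disjoint bounded sequence $(x_n)\subseteq E$, Lemma~\ref{E' is o-cont} gives $x_n\stackrel{\text{\rm w}}{\to}0$; the positive and negative parts $(x_n^\pm)$ remain disjoint and bounded, so they too are weakly null. Then $(Tx_n^\pm)$ is a positive weakly null sequence in $F$, and $F\in(\text{\rm PSP})$ forces $\|Tx_n^\pm\|\to 0$, whence $\|Tx_n\|\to 0$.

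Next, by the classical characterization of M-weak compactness (see, e.g., \cite[Theorem~5.60]{AlBu}), for every $\varepsilon>0$ there exists $u_\varepsilon\in E_+$ such that $\|T((|x|-u_\varepsilon)^+)\|\leq\varepsilon$ for all $x\in B_E$. For $x\in B_E$, the decomposition $x^+=(x^+\wedge u_\varepsilon)+(x^+-u_\varepsilon)^+$ combined with $(x^+-u_\varepsilon)^+\leq(|x|-u_\varepsilon)^+$ and positivity of $T$ gives $Tx^+\in T[0,u_\varepsilon]+\varepsilon B_F$, and analogously $Tx^-\in T[0,u_\varepsilon]+\varepsilon B_F$. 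Subtracting, $T(B_E)\subseteq T[-u_\varepsilon,u_\varepsilon]+2\varepsilon B_F$. Theorem~\ref{Proposition 2.2}(i) yields $T\in\text{\rm AMc}(E,F)$, so the set $T[-u_\varepsilon,u_\varepsilon]$ is relatively compact in $F$, and Lemma~\ref{Lemma 3}(i) then concludes that $T(B_E)$ is relatively compact, i.e., $T\in\text{\rm K}(E,F)$.

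The main subtlety is the invocation of the M-weak compactness characterization: the sequential definition alone is insufficient, and one needs the ``uniform order-ideal tail'' form. Positivity of $T$ then has to be used carefully both to transfer uniform control from $|x|$ to $x^+$ and $x^-$ (via $(x^+-u_\varepsilon)^+\leq(|x|-u_\varepsilon)^+$ and monotonicity of the lattice norm) and to subsume the difference $T[0,u_\varepsilon]-T[0,u_\varepsilon]$ inside $T[-u_\varepsilon,u_\varepsilon]$.
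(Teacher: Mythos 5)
Your proof is correct, and it diverges from the paper's in a way worth noting. You share the same skeleton (establish $T\in\text{\rm Mwc}(E,F)$ and $T\in\text{\rm AMc}(E,F)$, then combine them), but both halves are executed differently. For M-weak compactness the paper cites \cite[Theorem~3.3]{CW99}, whereas you prove it directly: disjoint bounded $(x_n)$ gives weakly null $(x_n^{\pm})$ via Lemma~\ref{E' is o-cont}, and $F\in(\text{\rm PSP})$ plus positivity of $T$ forces $\|Tx_n^{\pm}\|\to 0$ --- a clean argument that, as a bonus, never uses the weak compactness of $T$ (that hypothesis enters only through Theorem~\ref{Proposition 2.2}(i) to get AM-compactness). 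For the conclusion, the paper passes from Mwc to semi-compactness via \cite[Theorem~5.72]{AlBu} and then invokes Wnuk's Theorem~125.5 (semi-compact $+$ AM-compact $+$ order continuity of the norms in $E^\ast$ and $F$ implies compact) as a black box; you instead unwind that theorem by hand, using the lattice approximation $\|T((|x|-u_\varepsilon)^+)\|\le\varepsilon$ from \cite[Theorem~5.60]{AlBu} and the identity $x^{\pm}=(x^{\pm}\wedge u_\varepsilon)+(x^{\pm}-u_\varepsilon)^+$ to get $T(B_E)\subseteq T[-u_\varepsilon,u_\varepsilon]+2\varepsilon B_F$, then finishing with AM-compactness and Lemma~\ref{Lemma 3}(i). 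This is essentially the same mechanism the paper itself deploys later in Theorem~\ref{c-5.60} and Corollary~\ref{prop.7_20.Jul}; your version buys a self-contained proof with fewer external citations and makes transparent exactly where each hypothesis (positivity, $(\text{\rm PSP})$, limited order intervals, the \text{\rm KB} property of $E^\ast$) is used, at the cost of being a little longer. All the individual steps --- disjointness of $(x_n^{\pm})$, the inequality $(x^+-u_\varepsilon)^+\le(|x|-u_\varepsilon)^+$, and the absorption of $T[0,u_\varepsilon]-T[0,u_\varepsilon]$ into $T[-u_\varepsilon,u_\varepsilon]$ --- check out.
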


\begin{proof}
Let $T\in \text{\rm W}_+(E,F)$. Then $T\in\text{\rm AMc}(E,F)$ by 
Proposition~\ref{Proposition 2.2}. On the other hand, $T\in\text{\rm Mwc}(E,F)$ by \cite[Theorem~3.3]{CW99}. 
As $T$ is positive, it is semi-compact by \cite[Theorem~5.72]{AlBu}.
Since $F\in(\text{\rm PSP})$, it has \text{\rm o}-continuous norm. As $E^\ast$ and $F$ both
have \text{\rm o}-continuous norms, $T\in\text{\rm K}(E,F)$ by \cite[Theorem~125.5]{Wnuk90} 
because of $T\in\text{\rm semi-K}(E,F)\cap\text{\rm AMc}(E,F)$.
\end{proof}

\medskip\noindent
The next result gives some conditions on an operator to be limited.

\begin{definition}\label{aDPO}{\em
An operator $T:E\to Y$ is called {\em almost Dunford--Pettis} ($T\in\text{\rm aDP}(E,Y)$) 
if $T$ maps disjoint weakly-null sequences to norm-null sequences.}
\end{definition}

\noindent
Note that $\text{\rm Id}_{L^1[0,1]}$ is an almost Dunford--Pettis operator but not  Dunford--Pettis one.

\begin{theorem}\label{Proposition 2.4}%no
If $E^\ast$ is a \text{\rm KB}-space and $F^\ast$ has 
\text{\rm w}$^\ast$-continuous lattice operations,  
then 
$$
\text{\rm aDP}_+(E,F) \subseteq \text{\rm Lim}(E,F).
$$
If $E^\ast$ has \text{\rm w}$^\ast$-sequentially continuous lattice operations, 
then 
$$
\text{\rm Mwc}(E,F)\subseteq \text{\rm Lim}(E,F).
$$
\end{theorem}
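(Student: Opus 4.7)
The plan is to treat the two inclusions separately.

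For the first inclusion, I would chain four facts already established in the paper. Since $E^\ast$ is a KB-space, Lemma~\ref{E' is o-cont} makes every disjoint bounded sequence $(x_n)\subseteq E$ weakly null, so the almost Dunford--Pettis property of $T$ forces $\|Tx_n\|\to 0$, i.e.\ $T\in\text{\rm Mwc}(E,F)$. Being positive, $T$ is then semi-compact by \cite[Theorem~5.72]{AlBu}. The hypothesis on $F^\ast$, through Lemma~\ref{w*cont lo in E} applied to $F$, makes every almost order bounded subset of $F$ limited, so in particular $F$ has limited order intervals. Proposition~\ref{Proposition 2.1}(i) then upgrades $T\in\text{\rm semi-K}(E,F)$ to $T\in\text{\rm Lim}(E,F)$.

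For the second inclusion I would pass to the adjoint via the duality $T\in\text{\rm Mwc}(E,F)\Leftrightarrow T^\ast\in\text{\rm Lwc}(F^\ast,E^\ast)$ (\cite[Theorem~5.64]{AlBu}), which shows that $T^\ast(B_{F^\ast})$ is an Lwc subset of $E^\ast$. Since $\sup_{x\in B_E}|g(Tx)|=\|T^\ast g\|_{E^\ast}$, verifying that $T(B_E)$ is limited reduces to showing $\|T^\ast g_n\|\to 0$ for every w$^\ast$-null $(g_n)\subseteq B_{F^\ast}$. The hypothesis on $E^\ast$ yields $|T^\ast g_n|\stackrel{w^\ast}{\to}0$, and since $\|T^\ast g_n\|=\bigl\||T^\ast g_n|\bigr\|$ and the solid hull of an Lwc set is Lwc, the task reduces further to showing that a positive w$^\ast$-null sequence $(h_n)$ lying in an Lwc subset of $E^\ast$ must be norm null.

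To close this last reduction, I would exploit the characterisation that every Lwc set is almost order bounded by an element of the order-continuous part: given $\varepsilon>0$, pick $\psi\in(E^\ast)^a_+$ with $\{h_n\}\subseteq[0,\psi]+\varepsilon B_{E^\ast}$ and truncate $a_n:=h_n\wedge\psi\in[0,\psi]$, so that $\|h_n-a_n\|\leq\varepsilon$ by a standard lattice estimate. Joint w$^\ast$-sequential continuity of $\wedge$ (a consequence of that of $|\cdot|$) delivers $a_n\stackrel{w^\ast}{\to}0$, and since $\psi\in(E^\ast)^a$ the interval $[0,\psi]$ is weakly compact with the weak and w$^\ast$ topologies agreeing on it (two comparable Hausdorff topologies, one compact), hence $a_n\stackrel{w}{\to}0$. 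The classical Meyer--Nieberg fact that in an order-continuous Banach lattice every positive order bounded weakly null sequence is norm null then yields $\|a_n\|\to 0$, whence $\limsup\|h_n\|\leq\varepsilon$; letting $\varepsilon\to 0$ concludes the proof.

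The hard part is this final norm-null conclusion, which rests on two non-trivial Banach-lattice facts: that Lwc sets can be almost-order-bounded with bounds in $(E^\ast)^a$, and the Dini-type behaviour of the order-continuous part. Everything else is a routine assembly of earlier results of the paper together with the cited theorems of \cite{AlBu}.
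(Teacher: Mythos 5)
Your argument for the first inclusion is essentially the paper's: both reduce to showing that $T$ is semi-compact and then invoke limitedness of order intervals in $F$ (via Lemma~\ref{w*cont lo in E}) together with Lemma~\ref{Lemma 3}(iii); the paper gets semi-compactness in one step from \cite[Proposition~2.2]{AqEl11}, while you rederive it by first showing $T\in\text{\rm Mwc}(E,F)$ and then applying \cite[Theorem~5.72]{AlBu} --- an equivalent route.

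For the second inclusion your proof is correct but genuinely different from, and considerably heavier than, the paper's. The paper applies the standard characterisation of norm-null sequences in a dual lattice: $\|g_n\|\to 0$ iff $|g_n|\stackrel{w^\ast}{\to}0$ and $g_n(x_n)\to 0$ for every disjoint bounded $(x_n)$ in $E_+$; with $g_n=T^\ast f_n$ the first condition comes from the hypothesis on $E^\ast$ and the second is immediate from $|T^\ast f_n(x_n)|\le\|f_n\|\,\|Tx_n\|$ and $T\in\text{\rm Mwc}(E,F)$. You instead pass to $T^\ast\in\text{\rm Lwc}(F^\ast,E^\ast)$ and run a truncation argument inside an almost order bounded hull, which works but imports two extra classical facts (that Lwc sets are $\varepsilon$-contained in intervals $[-u,u]$ with $u$ in the order-continuous part, and that positive order bounded weakly null sequences in order-continuous lattices are norm null). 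One justification you give is off: the hypothesis is only $w^\ast$-sequential continuity of the modulus \emph{at zero}, which does not by itself yield joint $w^\ast$-continuity of $\wedge$ at the point needed. The conclusion $a_n\stackrel{w^\ast}{\to}0$ is nevertheless true for a simpler reason: $0\le a_n=h_n\wedge\psi\le h_n$ with $h_n\ge 0$ and $h_n\stackrel{w^\ast}{\to}0$, so $0\le a_n(x)\le h_n(x)\to 0$ for $x\in E_+$ and linearity finishes it. With that repair your proof closes; it is simply a longer road to the same place.
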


\begin{proof}
Let $T\in\text{\rm aDP}_+(E,F)$. By \cite[Proposition~2.2]{AqEl11},  $T\in\text{\rm semi-K}(E,F)$.
Thus, for each $\varepsilon>0$, there exists some $u_\varepsilon\in F_+$ such that 
$T(B_E )\subseteq [-u_\varepsilon,u_\varepsilon] +\varepsilon B_F$. 
Since $F^\ast$ has \text{\rm w}$^\ast$-continuous lattice operations,
 $F$ has limited order intervals, hence $T(B_E)$ is limited by Lemma~\ref{Lemma 3}(iii).

\medskip
 Let $(f_n)\stackrel{w^\ast}{\to}0$ in $F^\ast$. 
We need to show $\| T^\ast f_n\|\to 0$. It suffices to show $|T^\ast f_n|\stackrel{w^\ast}{\to}0$ 
and $T^\ast (f_n)(x_n)\to 0$ for each norm bounded disjoint sequence $(x_n)$ in $E_+$. 
Since $T$ is bounded, $T^\ast f_n\stackrel{w}{\to} 0$ in $E^\ast$. 
Weak$^\ast$-continuity of lattice operations in $E^\ast$ gives $|T^\ast f_n|\stackrel{w^\ast}{\to} 0$. 
Consider
$$
  |T^\ast (f_n)(x_n)| = |f_n(Tx_n)|\leq\|f_n\| \|Tx_n\|.
$$
Since $(f_n)$ is norm bounded and $T\in \text{\rm Mwc}(E,F)$, the claim follows.
\end{proof}

\noindent
The next example shows a limited operator need not be Mwc.

%ex2.12.1 sokrashenyi
\begin{example}\label{example1}%no
{\em
Suppose $E^\ast$ is not a \text{\rm KB}-space and $F\ne \{0\}$. 
Then there exists a positive compact (and hence limited) operator $T:E\to F$
that is not {\em M}-weakly compact. Indeed, pick $y\in F_+\setminus \{0\}$.
Since the norm in $E^\ast$ is not \text{\rm o}-continuous, $E^\ast$ contains 
a sublattice isomorphic to $\ell^\infty$ and  sublattice isomorphic to $\ell^1$;
and $E$ contains a sublattice isomorphic to $\ell^1$ 
(\cite[Theorem~4.51]{AlBu} and \cite[Proposition~2.3.12]{Mey91}). 
Let $i:\ell^1\to E$ be a lattice embedding.
Then $i(\ell^1)$ is the range of a positive projection $P:E\to E$, that is $i(\ell^1)=P(E)$~\cite[Proposition~2.3.11]{Mey91}. 
Define $S:P(E)\to F$ by 
$$
   S\left(i\left(\sum\limits_{k=1}^\infty \beta_k e_k\right)\right):=
   \left(\sum\limits_{k=1}^\infty \beta_k \right)y,
$$
and let $T=SP$. Then $T:E\to F$ is positive. Since $S$ has rank one, 
$T$ is compact (and hence limited). But $T\not\in\text{\rm Mwc}(E,F)$.
This follows from $T(i(e_n))=SP(i(e_n))=S(i(e_n))=y$ and so $\|T(i(e_n))\|\not\to 0$.} 
\end{example}

%\subsection{}
\noindent
In the opposite direction, we have %We give relations between limited and \text{\rm aMwc} operators.

\begin{theorem}\label{Prop1}%no
If one of the following conditions holds$:$
\begin{enumerate}[\em (i)]
\item 
$\text{\rm Lim}_+(E,F)\subseteq\text{\rm Mwc}(E,F)$ for every $F$$;$ 
\item 
$\text{\rm Lim}(E,F)\subseteq\text{\rm aMwc}(E,F)$ for every $F$$;$
\end{enumerate}
then $E^\ast$ is a {\em KB}-space.
\end{theorem}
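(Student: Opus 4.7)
The plan is to prove both implications by contraposition using a single construction that recycles Example~\ref{example1}. Assume $E^\ast$ is not a \text{\rm KB}-space; I will exhibit a positive compact operator $T: E \to F$ (for any nonzero $F$) that lies in $\text{\rm Lim}_+(E,F)$ but belongs to neither $\text{\rm Mwc}(E,F)$ nor $\text{\rm aMwc}(E,F)$, thereby refuting (i) and (ii) simultaneously. By Lemma~\ref{E' is o-cont}, the norm of $E^\ast$ is not order continuous, and the references cited in Example~\ref{example1} (namely \cite[Theorem~4.51]{AlBu} together with \cite[Propositions~2.3.11 and 2.3.12]{Mey91}) supply a lattice embedding $i:\ell^1\to E$ whose image is the range of a positive projection $P:E\to E$.

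Next I would pick any $F\ne\{0\}$, any $y\in F_+\setminus\{0\}$, and define $S:P(E)\to F$ by $S\bigl(i\bigl(\sum_k\beta_k e_k\bigr)\bigr)=\bigl(\sum_k\beta_k\bigr)y$ and $T:=SP$, exactly as in Example~\ref{example1}. Then $T$ is positive and of rank one, hence compact. Compact sets are limited: any $\text{\rm w}^\ast$-null sequence in $F^\ast$ is norm bounded by the uniform boundedness principle and converges pointwise, hence uniformly on any totally bounded set. Thus $T(B_E)$ is limited and $T\in\text{\rm Lim}_+(E,F)$.

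For the refutation of (i), Example~\ref{example1} already observes that $(i(e_n))$ is disjoint and bounded in $E_+$, while $T(i(e_n))=y$ for all $n$; in particular $\|T(i(e_n))\|\not\to 0$, so $T\notin\text{\rm Mwc}(E,F)$. For the refutation of (ii), I invoke Hahn--Banach to pick $f\in F^\ast$ with $f(y)=1$ and note that the constant sequence $f_n:=f$ is weakly convergent in $F^\ast$, whereas $f_n(T\,i(e_n))=f(y)=1$ fails to tend to $0$, so $T\notin\text{\rm aMwc}(E,F)$. Either hypothesis (i) or (ii) is thereby contradicted, which establishes the contrapositive.

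The main obstacle is conceptual rather than technical: the whole construction is already packaged in Example~\ref{example1}, and the only new observations are the elementary facts that compact operators are limited and that a constant sequence $(f,f,\dots)$ is a legitimate weakly convergent testing sequence for the definition of $\text{\rm aMwc}$. No finer analysis of $E^{\ast\ast}$ or of the property~(d) is needed.
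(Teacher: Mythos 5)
Your proof is correct, and it takes a genuinely different route from the paper's in both halves. For (i), the paper sets $F=E$ and works on the dual side: since $E^\ast$ is not a \text{\rm KB}-space, its norm is not order continuous, so by \cite[Theorem~4.14]{AlBu} there is an order bounded disjoint sequence $(f_n)\subseteq[0,f]\subseteq E^\ast$ with $\|f_n\|\not\to 0$; the rank-one operator $Tx=f(x)y$ is compact (hence limited), and $T\notin\text{\rm Mwc}(E,E)$ is deduced by showing $T^\ast\notin\text{\rm Lwc}(E^\ast,E^\ast)$ (because $f=T^\ast g$ and $(f_n)$ sits in $\sol(T^\ast(B_{E^\ast}))$) and then invoking the duality $T\in\text{\rm Mwc}\Leftrightarrow T^\ast\in\text{\rm Lwc}$ from \cite[Theorem~5.64]{AlBu}. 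For (ii), the paper merely observes that the hypothesis forces $\text{\rm K}(E,F)\subseteq\text{\rm aMwc}(E,F)$ and quotes \cite[Theorem~2]{EAS}. You instead recycle the primal-side construction of Example~\ref{example1}: the complemented lattice copy of $\ell^1$ in $E$ supplies the disjoint bounded sequence $(i(e_n))$ with $T(i(e_n))=y$, which kills $\text{\rm Mwc}$ directly from its definition, and the constant weakly convergent sequence $(f,f,\dots)$ with $f(y)=1$ kills $\text{\rm aMwc}$ as defined in Definition~\ref{X to Y}\,c). What your approach buys is uniformity and self-containment: one positive compact operator refutes both inclusions for \emph{every} nonzero $F$, with no appeal to the adjoint duality theorem or to the external result of \cite{EAS}; what the paper's approach buys is brevity for (ii) (a one-line citation) and, in (i), an argument that isolates exactly where order continuity of the dual norm fails. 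All the steps you use are sound: the existence of the positively complemented copy of $\ell^1$ follows from Lemma~\ref{E' is o-cont} together with the references already cited in Example~\ref{example1}, relatively compact sets are indeed limited, and a constant sequence is a legitimate weakly convergent test sequence for $\text{\rm aMwc}$.
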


\begin{proof}
(i)\ 
Suppose $F=E$ and  $E^\ast$ is not a KB-space. Then, in view of Lemma~\ref{E' is o-cont}
and~\cite[Theorem~4.14]{AlBu}, there exists a disjoint sequence   
$(f_n)\subseteq [0,f]\subseteq E^\ast$ such that $\|f_n\|\not\to 0$.
Pick $y\in E_+$ such that $\|y\| =1$ and a functional $g\in E^\ast$ with $\|g\| = 1$ 
and $g(y) = 1$. 
Define a positive operator $T: E\to E$ by $T(x) = f(x)y$ for $x\in E$. 
As $T$ is compact, $T\in\text{\rm Lim}(E,E)$. 
We claim $T\not\in\text{\rm Mwc}(E,E)$. 
By~\cite[Theorem~5.64]{AlBu},
it is enough to show $T^\ast\not\in\text{\rm Lwc}(E^\ast,E^\ast)$. 
Let us observe that $T^\ast (g) = g(y)f=f$. 
Thus $(f_n)$ is a disjoint sequence in the solid hull of $T^\ast(B_E)$ with $\|f_n\|\not\to 0$. 
Therefore $T^\ast\not\in\text{\rm Lwc}(E^\ast,E^\ast)$ and hence
$T\not\in\text{\rm Mwc}(E,E)$ by~\cite[Theorem~5.64]{AlBu}, violating the assumption
$\text{\rm Lim}_+(E,E)\subseteq\text{\rm Mwc}(E,E)$. A contradiction.

\medskip
(ii)\ 
Under the assumption, $\text{\rm K}(E,F)\subseteq\text{\rm aMwc}(E,F)$, 
hence $E^\ast$ is a \text{\rm KB}-space by \cite[Theorem~2]{EAS}.
\end{proof}

The following theorem deals with the conditions on operators to be \text{\rm (a)Mwc}
(compare with \cite[Proposition 4.5]{AG_aoLwc}).

\begin{theorem}\label{Prop3}%OK
Let $E^\ast$ be a \text{\rm KB}-space. Then the following holds.
\begin{enumerate}[\em (i)]
\item 
If $F\in\text{\rm (GPP)}$ then
$\text{\rm Lim}(E,F)\subseteq\text{\rm Mwc}(E,F)$. 
\item 
$\text{\rm Lim}_+(E,F)\subseteq\text{\rm aMwc}(E,F)$ for every $F$.
\end{enumerate}
\end{theorem}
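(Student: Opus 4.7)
Both items rest on two easy facts available under the hypothesis that $E^\ast$ is a \text{\rm KB}-space: (a) by Lemma~\ref{E' is o-cont}, every bounded disjoint sequence in $E$ is weakly null; and (b) a bounded $T:E\to F$ lies in $\text{\rm Lim}(E,F)$ iff $T^\ast$ sends every $\text{\rm w}^\ast$-null sequence in $F^\ast$ to a norm-null sequence in $E^\ast$ (this is just the definition of limitedness rephrased via $\|T^\ast f_n\|=\sup_{x\in B_E}|f_n(Tx)|$).

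For (i), I would first upgrade limitedness to compactness: since $T(B_E)$ is limited and $F\in(\text{\rm GPP})$, the set $T(B_E)$ is relatively compact, so $T\in\text{\rm K}(E,F)$. Any compact operator between Banach spaces takes weakly null sequences to norm-null ones, so combined with (a) every bounded disjoint $(x_n)\subseteq E$ satisfies $\|Tx_n\|\to 0$. This is precisely $T\in\text{\rm Mwc}(E,F)$.

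For (ii), I fix $T\in\text{\rm Lim}_+(E,F)$, a bounded disjoint $(x_n)\subseteq E$, and a weakly convergent $(f_n)\subseteq F^\ast$ with weak limit $f$, and split $f_n=(f_n-f)+f$. Since $f_n-f\stackrel{w}{\to}0$ it is also $\text{\rm w}^\ast$-null, so (b) gives $\|T^\ast(f_n-f)\|\to 0$; boundedness of $(x_n)$ then forces $(T^\ast(f_n-f))(x_n)\to 0$. Meanwhile (a) yields $x_n\stackrel{w}{\to}0$, hence $(T^\ast f)(x_n)\to 0$. Adding, $f_n(Tx_n)=(T^\ast f_n)(x_n)\to 0$, which is $T\in\text{\rm aMwc}(E,F)$.

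There is no real obstacle: the argument is essentially mechanical once (a) and (b) are in hand. One observation worth recording is that positivity of $T$ plays no role in (ii), so the same proof in fact yields $\text{\rm Lim}(E,F)\subseteq\text{\rm aMwc}(E,F)$ under the same hypothesis on $E^\ast$; comparing with Theorem~\ref{Prop1}(ii), this says the inclusion in (ii) characterises $E^\ast$ being a \text{\rm KB}-space.
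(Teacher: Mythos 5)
Your proof is correct. Part (i) is essentially the paper's argument: both of you use Lemma~\ref{E' is o-cont} to get that a disjoint bounded sequence is weakly null and the Gelfand--Phillips property to upgrade a limited set to a relatively compact one; the paper applies this to the countable set $\{Tx_n\}$ while you apply it to all of $T(B_E)$ and then invoke complete continuity of compact operators, which is the same mechanism in different packaging. Part (ii), however, is a genuinely different route. The paper argues by duality through three cited results: $T\in\text{\rm Lim}_+(E,F)$ implies $T\in\text{\rm aLim}_+(E,F)$, hence $T^\ast\in\text{\rm aDP}_+(F^\ast,E^\ast)$ by \cite[Theorem~1]{El}, hence $T^\ast\in\text{\rm aLwc}_+(F^\ast,E^\ast)$ by \cite[Proposition~2.4]{BLM18}, and finally $T\in\text{\rm aMwc}(E,F)$ by \cite[Theorem~2.5]{BLM18}. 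You instead verify the definition of $\text{\rm aMwc}$ directly: writing $f_n=(f_n-f)+f$, the first summand is $\text{\rm w}^\ast$-null so $\|T^\ast(f_n-f)\|\to 0$ by limitedness of $T$, while the second is handled by the weak nullity of the disjoint sequence $(x_n)$ coming from the \text{\rm KB}-hypothesis. Your approach is self-contained and, as you note, never uses positivity, so it actually proves the stronger inclusion $\text{\rm Lim}(E,F)\subseteq\text{\rm aMwc}(E,F)$; combined with Theorem~\ref{Prop1}(ii) this gives a clean characterization of $E^\ast$ being a \text{\rm KB}-space, which the paper's positive-operator version does not immediately yield. What the paper's proof buys in exchange is brevity (given the cited machinery) and a placement of the result inside the $\text{\rm aLim}$/$\text{\rm aDP}$/$\text{\rm aLwc}$ duality framework used elsewhere in the text.
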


\begin{proof}
(i)\ 
Let $T\in\text{\rm Lim}(E,F)$ and $(x_n)$ be a disjoint bounded sequence in $E$.
Then, by Lemma~\ref{E' is o-cont}, $x_n\stackrel{w}{\to}0$ and hence $Tx_n\stackrel{w}{\to}0$.
Since $F\in\text{\rm (GPP)}$ and the set $\{Tx_n\}_{n=1}^\infty$ is limited, it is relatively compact.
Then $\|Tx_n\|\to 0$, and hence  $T\in\text{\rm Mwc}(E,F)$.

\medskip
(ii)\ 
 Suppose $T\in \text{\rm Lim}_+(E,F)$.
Then $T\in\text{\rm aLim}_+(E,F)$, and hence 
$T^\ast\in\text{\rm aDP}_+(F^\ast,E^\ast)$~\cite[Theorem~1]{El}. 
Then $T^\ast\in\text{\rm aLwc}_+(F^\ast,E^\ast)$ by \cite[Proposition~2.4]{BLM18},
and thus $T\in\text{\rm aMwc}(E,F)$ in view of~\cite[Theorem~2.5]{BLM18}.
\end{proof}

\noindent
The condition that $E^\ast$ is a \text{\rm KB}-space is essential in Theorem~\ref{Prop3}\,(i) due to
\cite[the example before Theorem~5.62]{AlBu}.

\medskip\noindent
In general, a weakly compact operator need not be limited. For example, $\text{\rm Id}_{\ell^2}$
is weakly compact but not limited. Theorem~5 in \cite{EMM14} shows that if $F$ is $\sigma$-Dedekind complete,
then every almost limited $T:E\to F$ is weakly compact iff $E$ is reflexive or $F=F^a$.

\begin{theorem}\label{Proposition 2.8} %no
If  $\text{\rm W}(E,Y)\subseteq\text{\rm Lim}(E,Y)$, then one of the following holds$:$
\begin{enumerate}[\em (i)]
\item 
$E^\ast$ is a \text{\rm KB}-space;
\item 
$Y\in (\text{\rm DPP})$.
\end{enumerate}
\end{theorem}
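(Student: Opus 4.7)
The plan is to argue by contraposition: assume both that $E^\ast$ is not a \text{\rm KB}-space and that $Y\not\in(\text{\rm DPP})$, and then construct a $T\in\text{\rm W}(E,Y)\setminus\text{\rm Lim}(E,Y)$, contradicting the hypothesis.

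Since $E^\ast$ is not a \text{\rm KB}-space, the construction in Example~\ref{example1} yields a lattice embedding $i:\ell^1\to E$ together with a positive projection $P:E\to E$ such that $P(E)=i(\ell^1)$. Let $\widetilde{i}:i(\ell^1)\to\ell^1$ denote the lattice isomorphism inverse to $i$ on its image, and set $J:=\widetilde{i}\circ P:E\to\ell^1$, so that $J(i(e_n))=e_n$ for every $n$. From $Y\not\in(\text{\rm DPP})$ and Definition~\ref{Main Schur property} we obtain weakly null sequences $(y_n)\subseteq Y$ and $(g_n)\subseteq Y^\ast$ with $g_n(y_n)\not\to 0$; set $M:=\sup_n\|y_n\|<\infty$. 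Define $R:\ell^1\to Y$ on the canonical basis by $R(e_k):=y_k$, extended linearly and continuously, so $\|R\|\leq M$. Finally let $T:=R\circ J\in\text{\rm L}(E,Y)$.

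To verify $T\in\text{\rm W}(E,Y)$ it suffices to check the same for $R$. Since $(y_n)$ is weakly null, the set $\{y_n:n\in\mathbb{N}\}\cup\{0\}$ is weakly compact, so by Krein's theorem its closed absolutely convex hull $K$ is weakly compact in $Y$; since $R(B_{\ell^1})\subseteq K$, we conclude $R\in\text{\rm W}(\ell^1,Y)$, hence $T\in\text{\rm W}(E,Y)$. On the other hand, the vectors $x_n:=i(e_n)/\|i\|$ lie in $B_E$ and satisfy $Tx_n=y_n/\|i\|$. Since $(g_n)$ is weakly null in $Y^\ast$, it is in particular \text{\rm w}$^\ast$-null, yet
$$
 \sup_{x\in B_E}|g_n(Tx)|\geq |g_n(Tx_n)|=|g_n(y_n)|/\|i\|\not\to 0,
$$
so $(g_n)$ witnesses that $T(B_E)$ is not limited. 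Thus $T\in\text{\rm W}(E,Y)\setminus\text{\rm Lim}(E,Y)$, contradicting the assumption.

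The principal difficulty is manufacturing a single operator that couples both failures: the $\ell^1$-sublattice furnished by the failure of the \text{\rm KB} property supplies a canonical transport mechanism carrying any preassigned weakly null sequence from $Y$ back into the image of a bounded operator on $E$, while the failure of $(\text{\rm DPP})$ provides the dual witness $(g_n)$ that defeats limitedness on precisely that image. Verifying the weak compactness of $R$ through Krein's theorem is routine, but the correct identification of the test points $x_n=i(e_n)/\|i\|$ is what brings the two obstructions into contact.
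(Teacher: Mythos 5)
Your proof is correct, and it follows the same overall skeleton as the paper's: argue by contraposition, factor a weakly compact non-limited operator through $\ell^1$, use the failure of the KB property on the domain side to map a bounded sequence of $B_E$ onto the unit vectors $e_n$, and use the non-(DPP) witnesses $(y_n)$, $(g_n)$ on the range side exactly as the paper does with its operator $S\alpha=\sum_k\alpha_k y_k$. The one genuine difference is how you manufacture the surjection $E\to\ell^1$. The paper builds it by hand: it extracts a disjoint positive sequence $(u_n)$ in $B_E$ that is not weakly null, picks $\theta\in E^\ast_+$ with $\theta(u_n)>\varepsilon$, and uses the components $\theta_n$ of $\theta$ in the carriers of the $u_n$ (via \cite[Theorem~116.3]{ZaII}) to define $Px=(\theta_k(x)/\theta(u_k))_k$. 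You instead invoke the structural fact, already recorded in Example~\ref{example1}, that when $E^\ast$ is not a KB-space, $E$ contains a lattice copy of $\ell^1$ that is the range of a positive projection (\cite[Propositions~2.3.11 and 2.3.12]{Mey91}), and compose the projection with the inverse of the embedding. Your route is shorter and delegates the combinatorial work to a stronger quoted theorem; the paper's route is more self-contained and only needs the carrier-component decomposition of a single functional. Both yield test vectors in $B_E$ (your $x_n=i(e_n)/\|i\|$, the paper's $u_n$) whose images are, up to a fixed positive constant, the sequence $(y_n)$, and the final contradiction with limitedness is identical. One small presentational remark: since $g_n(y_n)\not\to 0$ only along a subsequence in general, it is cleanest to pass (as the paper does) to a subsequence with $|g_n(y_n)|>\epsilon$ before concluding; your inequality $\sup_{x\in B_E}|g_n(Tx)|\ge|g_n(y_n)|/\|i\|\not\to 0$ already suffices to negate limitedness, so this is a matter of style, not correctness.
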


\begin{proof}
Assume that $E^\ast$ is not a \text{\rm KB}-space
and $Y\not\in\text{\rm (DPP)}$. We construct 
$T\in \text{\rm W}(E,Y)$ and $T\not\in\text{\rm Lim}(E,Y)$. 

Since $E^\ast$ is not a \text{\rm KB}-space, there exists a disjoint sequence $(u_n)$ in $(B_E)_+$ 
which is not weakly null by Lemma~\ref{E' is o-cont}. 
By passing to a subsequence, we may assume that, for some $\theta\in E^\ast_+$ and  $\varepsilon >0$,
$\theta (u_n) > \varepsilon$  for all $n$. 
Then the components $\theta_n$ of $\theta$ 
in the carriers $C_n$ of  $u_n$ 
form  an order bounded disjoint sequence in $E^\ast_+$ such that $\theta_n(u_n) =
\theta (u_n)$ for all $n$ and $\theta_n(u_m) = 0$ if $n\ne m$ by \cite[Theorem~116.3]{ZaII}. 
We define an operator $P:E\to \ell^1$ by 
$$
P(x) =\left(\frac{\theta_k(x)}{\theta(u_k)}\right)_{k=1}^\infty \quad ( x \in E). 
$$
The operator $P$ is well defined as  
$$
\|Px\|= \sum_{k=1}^\infty \left|\frac{\theta_k(x)}{\theta(u_k)} \right|\leq
\sum_{k=1}^\infty \frac{|\theta_k(x)|}{\varepsilon} \leq
\frac{1}{\varepsilon}\sum_{k=1}^\infty \theta_k(|x|)\leq \frac{1}{\varepsilon}\theta(|x|)
$$
holds for each $x\in E$.

As $Y\not\in\text{\rm (DPP)}$, there is a weakly null sequence $(y_n)$ in $Y$ 
and a weakly null (hence \text{\rm w}$^\ast$-null) sequence $(f_n)$ in $Y^\ast$  
such that $(f_n(y_n))\not\to 0$. Passing to a subsequence, 
we may assume that, for some $\epsilon >0$, 
$$
   |f_n(y_n)| > \epsilon \quad\quad  (\forall  n\in {\mathbb N}).
   \eqno(1)
$$ 
We define an operator $S: \ell^1 \to Y$ by $S\alpha=\sum_{k=1}^\infty \alpha_k y_k$.
 By \cite[Theorem~5.26]{AlBu}, $S$ is weakly compact.
 
Then $T = SP\in\text{\rm W}(E,Y)$. 
Suppose $T$ is limited. Then the $\text{\rm w}^\ast$-null sequence $(f_n)$ is  
uniformly null on $T(B_E)$. Since
$$
Tu_n = SPu_n=Se_n=y_n\quad\quad  (\forall  n\in {\mathbb N}),
$$
it follows from~(1) that $|f_n(Tu_n )| > \epsilon$ for all $n$, contradicting uniform
convergence to zero of $(f_n)$ on $T(B_E)$. 
Therefore $T \notin\text{\rm Lim}(E,Y)$, as desired.
\end{proof}

\medskip\noindent
We discuss some composition properties of semi-compact operators. 
%$E\in\text{\rm (DDw$^\ast$P)}$ if every disjoint bounded sequence in 
%$E^\ast$ is \text{\rm w}$^\ast$-null (see Definition~\ref{Schur property}). 
\begin{proposition}\label{Proposition 2.14}%OK
Let $F$ have \text{\rm o}-continuous norm, $T\in\text{\rm semi-K}(X,F)$, and $S\in\text{\rm DP}(F,G)$.
Then $ST\in\text{\rm K}(X,G)$.
\end{proposition}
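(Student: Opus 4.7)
The plan is to combine semi-compactness of $T$ with order continuity of the norm in $F$ and the Dunford--Pettis property of $S$, and then invoke the approximation principle of Lemma~\ref{Lemma 3}(i).

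First I would unfold the definition of $T\in\text{\rm semi-K}(X,F)$: for every $\varepsilon>0$ there exists $u_\varepsilon\in F_+$ such that $T(B_X)\subseteq [-u_\varepsilon,u_\varepsilon]+\varepsilon B_F$. Applying $S$ to both sides yields
$$
ST(B_X)\subseteq S[-u_\varepsilon,u_\varepsilon]+\varepsilon\|S\|B_G.
$$
Thus, after replacing $\varepsilon$ by $\varepsilon/(1+\|S\|)$, the set $ST(B_X)$ lies within an arbitrarily small norm neighborhood of $S[-u_\varepsilon,u_\varepsilon]$.

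Next I would show that $S[-u_\varepsilon,u_\varepsilon]$ is relatively compact in $G$. Since $F$ has order-continuous norm, every order interval in $F$ is weakly compact (this is the standard consequence used also in Proposition~\ref{Proposition 2.1}(iii)). Hence $[-u_\varepsilon,u_\varepsilon]$ is weakly compact in $F$. Because $S\in\text{\rm DP}(F,G)$, $S$ maps weakly compact subsets of $F$ to relatively norm-compact subsets of $G$: indeed, by the Eberlein--Šmulian theorem it is enough to check sequential weak compactness, and a Dunford--Pettis operator sends weakly convergent sequences to norm-convergent ones. Consequently $S[-u_\varepsilon,u_\varepsilon]$ is relatively compact.

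Finally, I would close with Lemma~\ref{Lemma 3}(i): $ST(B_X)$ is contained, for every $\varepsilon>0$, in a relatively compact set plus $\varepsilon B_G$, hence $ST(B_X)$ is itself relatively compact and $ST\in\text{\rm K}(X,G)$. The only nontrivial ingredient is the fact that Dunford--Pettis operators take weakly compact sets to norm compact sets; everything else is routine bookkeeping once the semi-compact approximation is written down.
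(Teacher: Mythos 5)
Your proof is correct, and it takes a mildly but genuinely different route from the paper's. The paper first upgrades $T(B_X)$ itself to a relatively weakly compact set: since $[-u_\varepsilon,u_\varepsilon]$ is weakly compact under the \text{\rm o}-continuity of the norm in $F$, Lemma~\ref{Lemma 3}(ii) gives relative weak compactness of $T(B_X)$, and then a direct Eberlein--\v{S}mulian subsequence argument combined with $S\in\text{\rm DP}(F,G)$ shows $ST(B_X)$ is relatively compact. You instead push the approximation through $S$ first, writing $ST(B_X)\subseteq S[-u_\varepsilon,u_\varepsilon]+\varepsilon\|S\|B_G$, observe that $S$ carries the weakly compact interval $[-u_\varepsilon,u_\varepsilon]$ onto a relatively norm compact set (the standard characterization of Dunford--Pettis operators, provable exactly by the Eberlein--\v{S}mulian argument you sketch), and close with Lemma~\ref{Lemma 3}(i). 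The ingredients are identical --- semi-compactness, weak compactness of order intervals, and the fact that \text{\rm DP} operators turn weak compactness into norm compactness --- but you only ever apply the approximation lemma at the level of norm compactness in $G$, whereas the paper applies it at the level of weak compactness in $F$ and gets, as a by-product, that $T$ itself is weakly compact. Both arguments are complete; yours is marginally more self-contained in that it needs only the elementary part (i) of Lemma~\ref{Lemma 3}.
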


\begin{proof}
Since $T\in\text{\rm semi-K}(X,F)$, for each $\varepsilon>0$, 
there exists $u_\varepsilon\in F_+$ such that 
$T(B_X)\subseteq [-u_\varepsilon,u_\varepsilon] +\varepsilon B_F$. 
Since $F$ has \text{\rm o}-continuous norm, 
$[-u_\varepsilon,u_\varepsilon]$ is relatively weakly compact and hence $T(B_X)$ 
is weakly compact by Lemma~\ref{Lemma 3}(ii). 
In order to show that $ST(B_E)$ is relatively compact, take a sequence $(STx_n)$ 
in $ST(B_X)$. As $T(B_X)$ is relatively weakly compact then, 
by~\cite[Theorem~3.40]{AlBu},
there exists a weakly convergent subsequence $(Tx_{n_k})$.  
Since $S\in\text{\rm DP}(F,G)$, the sequence $(STx_{n_k})$ converges in norm.
As the sequence $(STx_n)$ in $ST(B_X)$ is arbitrary, 
$ST(B_X)$ is compact in $G$, and hence $ST\in\text{\rm K}(X,G)$.
\end{proof}

\noindent
If $E^\ast$ is a {\em KB}-space and  $F=F^a$ then, for each regularly aDP operator $T$
(i.e. there exist $T_1,T_2 \in\text{\rm aDP}_+(E,F)$ such that $T=T_1-T_2$) and 
$S\in\text{\rm DP}(F,G)$, we have $ST\in\text{\rm K}(E,G)$.
This follows from the fact that each positive aDP operator is semi-compact by \cite[Proposition~2.2]{AqEl11}.

\bigskip
\noindent
The square of a semi-compact operator is not compact in general
(e.g., $\text{\rm Id}_c$ is semi-compact but $\text{\rm Id}_c^2=\text{\rm Id}_c$ is not compact). 
This problem was studied in \cite[Theorem~2.4]{AqNZPal2006}. We give another sufficient condition. 

\begin{theorem}\label{prp2.15}%no
Let $E$ have limited order intervals. 
\begin{enumerate}[{\em (i)}]
\item
If $E$ has \text{\rm o}-continuous norm and $T\in\text{\rm semi-K}_+(E)$ then $T^2\in\text{\rm K}(E)$.
\item
If $T\in\text{\rm semi-K}(E)\cap\text{\rm W}(E)$ then $T^2\in\text{\rm K}(E)$.
\end{enumerate}
\end{theorem}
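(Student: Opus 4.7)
My plan is to reduce both parts to the same two ingredients: first that $T$ is AM-compact, and second that $T$ is semi-compact, then combine these via Lemma~\ref{Lemma 3}(i).

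For part (i), since $E$ has limited order intervals and o-continuous norm, Corollary~\ref{Corollary 2.2.2 +2.2.3}(i) (applied with $F = E$) immediately gives $T\in\text{\rm AMc}(E)$ from the assumption $T\in\text{\rm semi-K}_+(E)$. For part (ii), weak compactness of $T$ together with the assumption that $E$ has limited order intervals gives $T\in\text{\rm AMc}(E)$ by Theorem~\ref{Proposition 2.2}(i). So in both settings we have $T\in\text{\rm AMc}(E)$ already from results proved earlier in the paper.

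Now I would use semi-compactness to close the argument uniformly. Fix $\varepsilon'>0$ and set $\varepsilon=\varepsilon'/(1+\|T\|)$. By semi-compactness there is $u_\varepsilon\in E_+$ with $T(B_E)\subseteq[-u_\varepsilon,u_\varepsilon]+\varepsilon B_E$. Applying $T$ to both sides yields
\[
   T^2(B_E) \subseteq T[-u_\varepsilon,u_\varepsilon] + \varepsilon T(B_E) \subseteq T[-u_\varepsilon,u_\varepsilon] + \varepsilon'B_E.
\]
Since $T\in\text{\rm AMc}(E)$, the set $T[-u_\varepsilon,u_\varepsilon]$ is relatively compact in $E$. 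As $\varepsilon'>0$ was arbitrary, Lemma~\ref{Lemma 3}(i) gives that $T^2(B_E)$ is relatively compact, i.e.\ $T^2\in\text{\rm K}(E)$.

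I do not foresee a real obstacle: the whole content has been prepared by Theorem~\ref{Proposition 2.2} and Corollary~\ref{Corollary 2.2.2 +2.2.3}, whose role is precisely to upgrade semi-compact-like hypotheses to AM-compactness in the presence of limited order intervals; combining this with Lemma~\ref{Lemma 3}(i) and the defining semi-compact approximation is routine. The only point to watch is that in (ii) we do \emph{not} assume positivity, so one must not appeal to Corollary~\ref{Corollary 2.2.2 +2.2.3}(i) there but rather to Theorem~\ref{Proposition 2.2}(i), which needs only weak compactness of $T$ together with limited order intervals in $E$.
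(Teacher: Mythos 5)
Your proof is correct. Part (ii) is essentially the paper's argument in disguise: the paper factors $T=SR$ over a reflexive space and pushes the limited interval $[-u_\varepsilon,u_\varepsilon]$ through $R$ to get relative compactness of $T[-u_\varepsilon,u_\varepsilon]$, which is exactly the content of Theorem~\ref{Proposition 2.2}(i) that you cite, after which both you and the paper finish with $T^2(B_E)\subseteq T[-u_\varepsilon,u_\varepsilon]+\varepsilon'B_E$ and Lemma~\ref{Lemma 3}(i). Part (i), however, is handled differently in the paper: there, positivity is used to write $T[-u_\varepsilon,u_\varepsilon]\subseteq[-Tu_\varepsilon,Tu_\varepsilon]$, so that $T^2(B_E)\subseteq[-Tu_\varepsilon,Tu_\varepsilon]+\varepsilon\|T\|B_E$ is \emph{limited} by Lemma~\ref{Lemma 3}(iii), and then relative compactness is extracted from the Gelfand--Phillips property of $E$ itself (o-continuous norm implies Dedekind $\sigma$-completeness, hence $E\in\text{\rm (GPP)}$ by Buhvalov's theorem). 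You instead upgrade $T$ to $\text{\rm AMc}(E)$ via Corollary~\ref{Corollary 2.2.2 +2.2.3}(i), whose hypotheses ($E$ with limited order intervals, target with o-continuous norm) are indeed met with $F=E$, and then run the same endgame as in (ii). Your route buys a uniform treatment of both parts through a single AM-compactness lemma and a single application of Lemma~\ref{Lemma 3}(i); the paper's route for (i) is more direct in that it never leaves $E$ and makes the role of positivity and of the GPP of $E$ explicit. Your closing caveat is also well placed: in (ii) one cannot invoke the positive-operator corollary, and Theorem~\ref{Proposition 2.2}(i) is the correct substitute.
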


\begin{proof}
(i) \ 
Let  $T\in\text{\rm semi-K}_+(E)$. Then, for $\varepsilon>0$
there exists $u_\varepsilon\in E_+$ wuth $T(B_E)\subseteq [-u_\varepsilon,u_\varepsilon]+\varepsilon B_E$, 
which yields $T^2(B_{E})\subseteq [-Tu_\varepsilon,Tu_\varepsilon]+\varepsilon\|T\|B_E$.
By the assumption, each $[-Tu_\varepsilon,Tu_\varepsilon]$ is limited, and hence $T^2(B_{E})$ is limited by Lemma~\ref{Lemma 3}(iii). 
By \cite{Buh}, a Dedekind $\sigma$-complete Banach lattice has the Gelfand -- Phillips property 
iff it has $\text{\rm o}$-continuous norm.
Therefore $E\in(\text{\rm GPP})$. Then $T^2(B_{E})$ is relatively compact, and hence 
$T^2\in\text{\rm K}(E)$.

\medskip
(ii) \ 
As $T\in \text{\rm W}(E)$, it factors over a reflexive space $G$ as $T = SR$, 
where $R: E\to G$ and $S: G\to E$ are bounded by~\cite[Theorem~5.38]{AlBu}. 

Let $\varepsilon>0$. Since $T\in\text{\rm semi-K}(E)$, 
there exists $u_\varepsilon\in E_+$ such that $T(B_E)\subseteq [-u_\varepsilon, u_\varepsilon]+\varepsilon B_E$. 
The interval  $[-u_\varepsilon, u_\varepsilon]$  is limited in $E$. 
Applying $R$ to $[-u_\varepsilon, u_\varepsilon]$, we see that $R[-u_\varepsilon, u_\varepsilon]$ is limited in $G$.  
Being reflexive, $G\in(\text{\rm GPP})$, and hence $R[-u_\varepsilon, u_\varepsilon]$ is relatively compact in $G$. 
Therefore, $SR[-u_\varepsilon, u_\varepsilon]$ is relatively compact in $E$ and 
$$
   T^2(B_E)\subseteq SR[-u_\varepsilon, u_\varepsilon]+\varepsilon SR(B_E).
$$
Since $\varepsilon>0$ is arbitrary, Lemma \ref{Lemma 3}(i) implies that $T^2(B_E)$
is relatively compact in $E$, and hence $T^2\in\text{\rm K}(E)$.
\end{proof}

%\subsection{}
%We now consider limitedly \text{\rm Lwc} (\text{\rm Mwc}) operators.

\begin{proposition}\label{Proposition 2.17}%no
$\text{\rm aLwc}(X,F)\subseteq l\text{\rm -Lwc}(X,F)$.
\end{proposition}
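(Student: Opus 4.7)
The plan is to invoke the Meyer--Nieberg characterization of \text{\rm Lwc} sets: a norm-bounded $A\subseteq F$ is \text{\rm Lwc} if and only if $\sup_{a\in A}|f_n(a)|\to 0$ for every disjoint bounded $(f_n)\subseteq F^\ast$. Let $T\in\text{\rm aLwc}(X,F)$ and $C\subseteq X$ be limited; I would verify this criterion for $T(C)$.

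First, the equivalent formulation of aLwc in Definition~\ref{E to Y}(d) yields $T(X)\subseteq F^a$, so in particular $T(C)\subseteq F^a$ is norm-bounded. Fix now an arbitrary disjoint bounded sequence $(f_n)\subseteq F^\ast$. What has to be shown is $\sup_{x\in C}|(T^\ast f_n)(x)|\to 0$. Since $C$ is limited, this follows once $T^\ast f_n\stackrel{\text{\rm w}^\ast}{\to}0$ in $X^\ast$, i.e., once $f_n(Tx)\to 0$ for every fixed $x\in X$.

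For this pointwise convergence I would isolate the following auxiliary fact: if $y\in F^a$ and $(f_n)\subseteq F^\ast$ is disjoint and bounded, then $f_n(y)\to 0$. To see it, note that the order interval $[-|y|,|y|]$ lies inside the order-continuous ideal $F^a$, hence any disjoint sequence in it is order-bounded in an order-continuous Banach lattice and therefore norm-null; thus $[-|y|,|y|]$ is itself an \text{\rm Lwc} subset of $F$. Applying Meyer--Nieberg to this interval, together with the standard identity $\sup_{|z|\le|y|}|f(z)|=|f|(|y|)$, gives $|f_n|(|y|)\to 0$, whence $|f_n(y)|\le|f_n|(|y|)\to 0$. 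Taking $y=Tx\in F^a$ yields precisely the $\text{\rm w}^\ast$-null convergence of $(T^\ast f_n)$.

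The only substantive step is the auxiliary pointwise claim; once the observation that order-bounded disjoint sequences in $F^a$ are norm-null is combined with Meyer--Nieberg, everything else is a direct consequence of the limitedness of $C$.
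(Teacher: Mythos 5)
Your proof is correct, and it takes a genuinely different route from the paper's. The paper argues in one line: since $T(X)\subseteq F^a$ and $F^a$ has order continuous norm, $F^a$ satisfies the dual disjoint $\text{\rm w}^\ast$-property, and then a cited result (\cite[Theorem~2.6]{CCJ}) says that limited subsets of such a lattice are \text{\rm Lwc}; applying this to the limited set $T(C)\subseteq F^a$ finishes the argument. You instead work entirely on the side of the domain: you prove from scratch that every disjoint bounded $(f_n)$ in $F^\ast$ satisfies $|f_n|(|y|)\to 0$ for each $y\in F^a$ (via the \text{\rm Lwc}-ness of order intervals inside $F^a$ and the Riesz--Kantorovich formula $|f|(|y|)=\sup_{|z|\le|y|}|f(z)|$), conclude that $T^\ast f_n\stackrel{\text{\rm w}^\ast}{\to}0$ in $X^\ast$, and then let the definition of limitedness of $C$ upgrade pointwise to uniform convergence, closing the loop with the Meyer--Nieberg disjoint-functional characterization of \text{\rm Lwc} sets. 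What your version buys is self-containedness and the avoidance of a small subtlety implicit in the paper's citation, namely that one must regard $T(C)$ as a limited subset of the sublattice $F^a$ rather than of $F$ (limitedness is sensitive to the ambient space, since it is tested against $\text{\rm w}^\ast$-null sequences in the dual of that space); your argument never needs to pass to $(F^a)^\ast$. What the paper's version buys is brevity and the explicit link to the \text{\rm DDw}$^\ast$\text{\rm P} framework developed earlier in the text. Both proofs, incidentally, use only the consequence $T(X)\subseteq F^a$ of the \text{\rm aLwc} hypothesis, so each in fact establishes the formally stronger statement that any bounded operator with range in $F^a$ is limitedly \text{\rm L}-weakly compact.
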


\begin{proof}
Let $T\in \text{\rm aLwc}(X,F)$. As one of defining properties of aLwc operators, we have $T(X)\subseteq F^a$. 
Since $F^a$ has \text{\rm o}-continuous norm, 
$F^a\in\text{(\rm DDw$^\ast$P)}$ so that each limited subset in $F^a$ is \text{\rm Lwc} by~\cite[Theorem~2.6]{CCJ},
and hence $T\in l\text{\rm -Lwc}(X,F)$. 
\end{proof}

\noindent
%An operator $T:E\to F$ is called {\em limitedly \text{\rm M}-weakly compact} if $(Tx_n)$ is weakly null
%for each bounded disjoint sequence $(x_n)$ in $E$ \cite[Definition~4]{AEG_duality}. 
%\cite[Definition~3.1]{OM22}, where these operators are called weak M-weakly compact). 
%If $E^\ast$ is a \text{\rm KB}-space, each disjoint bounded  sequence is weakly null, 
%consequently $\text{\rm L}(E,F)\subseteq\ $l$\text{\rm -Mwc}(E,F)$.
%Recall that a Banach space $X$ has the {\em Schur property} (or $X\in\text{\rm (SP)}$)
%if each weakly null sequence in $X$ is norm null~\cite[p.\,207]{AlBu}.

\begin{proposition}\label{Proposition 2.18}%no
If $F$ has the Schur property then $l\text{\rm -Mwc}(E,F)\subseteq\text{\rm owc}(E,F)$.
\end{proposition}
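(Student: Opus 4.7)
The plan is to upgrade the hypothesis $T\in l\text{-Mwc}(E,F)$ to the stronger conclusion $T\in\text{Mwc}(E,F)$ by invoking the Schur property of $F$, and then deduce order weak compactness from M-weak compactness.

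First I would unfold the equivalent characterization of $l\text{-Mwc}$: by symmetry with the $l\text{-Lwc}$ description in Definition~\ref{E to Y}(e) (swapping the roles of the domain and $F^{\ast}$), $T\in l\text{-Mwc}(E,F)$ amounts to $Tx_n\stackrel{w}{\to}0$ in $F$ for every disjoint bounded sequence $(x_n)\subseteq E$. Since $F$ has the Schur property, weakly null sequences in $F$ are norm null, so $\|Tx_n\|\to 0$ for every such $(x_n)$. This is exactly the defining property of $T\in\text{Mwc}(E,F)$.

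To finish, I would observe that $\text{Mwc}(E,F)\subseteq\text{owc}(E,F)$ always. Fix $x\in E_+$ and pick any disjoint sequence $(z_n)\subseteq[0,x]$. Then $\|z_n\|\le\|x\|$, so the Mwc property gives $\|Tz_n\|\to 0$ and in particular $Tz_n\stackrel{w}{\to}0$. By the standard disjoint-sequence characterization of order weak compactness (see \cite[Theorem~5.57]{AlBu}), this proves $T[0,x]$ is relatively weakly compact; as $x\in E_+$ was arbitrary, $T\in\text{owc}(E,F)$.

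The only mild obstacle is pinning down the right equivalent formulation of $l\text{-Mwc}$ (the paper records the equivalent form for $l\text{-Lwc}$ explicitly, while the $l\text{-Mwc}$ version is obtained by the obvious duality swap and is consistent with the weaker ``weak Mwc" reading used in \cite{OM22}). Once that is in hand, the whole argument reduces to the short chain $l\text{-Mwc}+\text{Schur}\Rightarrow\text{Mwc}\Rightarrow\text{owc}$.
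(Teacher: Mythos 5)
Your proof is correct and follows essentially the same route as the paper's: both read $l\text{\rm -Mwc}$ as sending disjoint bounded sequences to weakly null ones, apply the Schur property of $F$ to upgrade this to norm convergence, and conclude via the disjoint-sequence characterization of order weak compactness in \cite[Theorem~5.57]{AlBu}. The only difference is cosmetic: you record the intermediate (slightly stronger) conclusion $T\in\text{\rm Mwc}(E,F)$, whereas the paper runs the same argument directly on order bounded disjoint sequences.
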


\begin{proof}
Let $T\in l\text{\rm -Mwc}(E,F)$. By~\cite[Theorem~5.57]{AlBu}, 
we need to show $\|Tx_n\|\to 0$ for each order bounded disjoint sequence $(x_n)$ in $E$. 
Such a sequence is w-null as it is order bounded and disjoint. Since $T\in l\text{\rm -Mwc}(E,F)$, 
we get $Tx_n\stackrel{{\rm w}}{\to}0$ and so $\|Tx_n\|\to 0$ as $F$ has the Schur property.
\end{proof}

\noindent
A subset $A$ of $E$ is called bi-bounded if $i(A)$ is order bounded in $E^{\ast\ast}$,
where $i:E\to E^{\ast\ast}$ is the canonical embedding. An operator $T:E\to F$
is b-weakly compact if $T(A)$ is relatively weakly compact for each
bi-bounded $A$ in $E$. A similar proof to the given one in Proposition~\ref{Proposition 2.18}
shows that $l\text{\rm -Mwc}(E,F)$ actually contained in the narrower set of b-weakly compact operators~\cite{AAT}.

%%%%%%%%%%%%%%%%%%%%
\section{Almost limited operators}
%%%%%%%%%%%%%%%%%%%%

In this section, we focus on almost limited operators and show they contain many
interesting classes of operators, and the converse problem of almost limited operators
contained in other classes. 

\medskip\noindent
A bounded subset $A$ of a Banach space $X$ is called {\em almost limited} (we write \text{\rm aLim}-set) 
if each disjoint \text{\rm w}$^\ast$-null sequence in $X^\ast$ converges uniformly to zero on $A$. 
Almost limited sets were introduced in~\cite{CCJ}. Let us also note that every order interval in 
$E$ is \text{\rm aLim} iff $E\in {\text\rm (d)}$.
An operator $T: X\to F$ is called {\em almost limited} 
(we write $T\in\text{\rm aLim}(X,F)$) if $T(B_X)$ is an \text{\rm aLim}-set. 
Almost limited sets and operators were studied in \cite{EMM13,CCJ,EMM14}.

%\medskip\noindent
%Recall that $F\in(\text{\rm PDSP})$ if every 
%disjoint \text{\rm w}$^\ast$-null sequence in $F_+^\ast$ is norm null. 
%It follows that whenever $F\in(\text{\rm PDSP})$ 
%then $\text{\rm L}(X,F)\subseteq\text{\rm aLim}(X,F)$ for every $X$. 
%Also notice that since each \text{\rm Lwc}-set is \text{\rm aLim} by \cite[Theorem~2.6]{CCJ}, 
%$\text{\rm Lwc}(X,F)\subseteq\text{\rm aLim}(X,F)$ for all $X$ and $F$.
%If $T\in\text{\rm aLwc}(X,F)$ then, for each $S\in\text{\rm W}(E,X)$ we have
%$TS\in\text{\rm Lwc}(E,F)$, and therefore $TS\in\text{\rm aLim}(E,F)$.

%\medskip\noindent
%Let $E^\ast$ and $F$ both have \text{\rm o}-continuous norms. Then:
%\begin{enumerate}[$a)$]
%\item
%$\text{\rm DP}_+(E,F)\subseteq\text{\rm Lwc}(E,F)$, and hence
%$\text{\rm span(DP}_+(E,F))\subseteq\text{\rm aLim}(E,F)$.
%\item
%Each order bounded \text{\rm aLwc} operator $T:E\to F$ 
%is \text{\rm Lwc}, by \cite[Cor.\,4]{EAS}, and therefore $T\in\text{\rm aLim}(E,F)$.  
%\end{enumerate}

\medskip\noindent
It was shown in \cite[Proposition~4.5]{EMM13} that each order
bounded \text{\rm Mwc} operator $T:E\to F$ is almost limited whenever 
lattice operations in $E^\ast$ are \text{\rm w}$^\ast$-se\-qu\-en\-tial\-ly continuous
or $F\in\text{\rm (d)}$. By \cite[Corollary~4]{El}, if lattice operations in $E^\ast$ are 
\text{\rm w}$^\ast$-se\-qu\-en\-tial\-ly continuous 
then $\text{\rm aLim}(X,E)\subseteq\text{\rm Lim}(X,E)$.

\begin{theorem}\label{Proposition 3.1}%no
Let $E^\ast$ be a \text{\rm KB}-space. The following holds.
\begin{enumerate}[{\em (i)}]
\item
If $F\in\text{\rm (d)}$ or the lattice operations in $E^\ast$ are \text{\rm w}$^\ast$-sequentially continuous
then each order bounded \text{\rm aLwc} operator $T:E\to F$ is \text{\rm aLim}.
\item
If $E$ is reflexive then $\text{\rm aLwc}(E,F)\subseteq\text{\rm aLim}(E,F)$ for every $F$.
\end{enumerate}
\end{theorem}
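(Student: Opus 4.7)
The plan is to handle (ii) directly and to reduce (i) to \cite[Proposition~4.5]{EMM13}, which asserts that under precisely the same auxiliary hypotheses ($F\in\text{\rm (d)}$ or the lattice operations in $E^\ast$ are w$^\ast$-sequentially continuous) every order bounded M-weakly compact operator $T:E\to F$ is aLim. So the crux of (i) is to show that an order bounded aLwc operator $T:E\to F$ with $E^\ast$ a \text{\rm KB}-space is in fact Mwc.

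For part (ii), reflexivity of $E$ makes $B_E$ weakly compact, so the definition of $T\in\text{\rm aLwc}(E,F)$ immediately gives that $T(B_E)$ is an \text{\rm Lwc} subset of $F$. By the Meyer--Nieberg dual characterization of \text{\rm Lwc} sets, $\sup_{y\in T(B_E)}|f_n(y)|\to 0$ for every disjoint $(f_n)\subseteq B_{F^\ast}$; since every w$^\ast$-null sequence in $F^\ast$ is norm bounded by Banach--Steinhaus, the same uniform convergence applies, after rescaling, to every disjoint w$^\ast$-null sequence in $F^\ast$. Hence $T(B_E)$ is almost limited and $T\in\text{\rm aLim}(E,F)$.

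For part (i), given a disjoint bounded $(x_n)\subseteq E$, Lemma~\ref{E' is o-cont} applied to $E^\ast$ yields $x_n\stackrel{w}{\to}0$, so $\{x_n\}$ is relatively weakly compact, and the aLwc hypothesis forces $\{Tx_n\}$ to be \text{\rm Lwc}, hence (by \cite[p.\,212]{Mey91}) contained in $F^a$ and approximately order bounded there: for each $\varepsilon>0$ there is $u_\varepsilon\in F^a_+$ with $\{Tx_n\}\subseteq[-u_\varepsilon,u_\varepsilon]+\varepsilon B_F$. Together with order boundedness of $T$---which allows one to dominate $|Tx_n|$ by $|T||x_n|$ in the Dedekind completion of $F$ and to exploit the disjointness of $(|x_n|)$---this should deliver $\|Tx_n\|\to 0$, so that $T\in\text{\rm Mwc}(E,F)$, after which \cite[Proposition~4.5]{EMM13} finishes the proof. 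The main obstacle is precisely this final deduction: a weakly null sequence inside an \text{\rm Lwc} set need not be norm null (the Rademacher system in $L^1[0,1]$ is weakly null, lies in an \text{\rm Lwc} set, yet has unit norm), so the argument must genuinely exploit both the disjointness of $(x_n)$ and the order boundedness of $T$, together with the o-continuity of the norm on $F^a$, to rule out this pathology.
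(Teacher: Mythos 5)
Your part (ii) is correct and in fact more direct than the paper's argument: the paper argues by contradiction via Eberlein--Smulian on a subsequence of $B_E$, whereas you simply note that $T(B_E)$ is an \text{\rm Lwc} set (since $B_E$ is weakly compact) and that every \text{\rm Lwc} set is almost limited by the dual characterization of \text{\rm Lwc} sets. That part stands as written.

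Part (i) has a genuine gap, and you have named it yourself. Your plan is to show that an order bounded \text{\rm aLwc} operator is \text{\rm Mwc} and then invoke \cite[Proposition~4.5]{EMM13}, but the decisive step --- passing from ``$(x_n)$ is disjoint bounded and $\{Tx_n\}$ lies in an \text{\rm Lwc} set'' to ``$\|Tx_n\|\to 0$'' --- is never carried out, and it is doubtful that it can be. Disjointness of $(x_n)$ does not transfer to $(Tx_n)$, and the domination $|Tx_n|\le |T||x_n|$ does not produce a disjoint sequence either, so the defining property of \text{\rm Lwc} sets never applies to $(Tx_n)$ itself; your own Rademacher example shows that weak nullity plus membership in an \text{\rm Lwc} set is not enough. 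Note also that the paper only obtains $\text{\rm span}(\text{\rm aLim}_+(E,F))\subseteq\text{\rm Mwc}(E,F)$ under the \emph{additional} hypothesis that $F$ has \text{\rm o}-continuous norm (Theorem~\ref{Proposition 3.17}), which strongly suggests the implication you need fails without extra assumptions on $F$.

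The missing idea is that one should not aim at norm convergence at all. Under the stated alternative hypotheses ($F\in\text{\rm (d)}$ or $w^\ast$-sequential continuity of the lattice operations of $E^\ast$), \cite[Theorem~2.7]{EMM13} reduces almost limitedness of $T(B_E)$ to the diagonal condition $f_n(Tx_n)\to 0$ for disjoint $w^\ast$-null $(f_n)$ in $F^\ast$ and disjoint $(x_n)$ in $(B_E)_+$; this reduction is the only place the auxiliary hypotheses and the order boundedness of $T$ enter. Since $E^\ast$ is a \text{\rm KB}-space, $x_n\stackrel{w}{\to}0$, and the dual characterization of \text{\rm aLwc} in Definition~\ref{E to Y}\,d) then yields $f_n(Tx_n)\to 0$ directly, because $(f_n)$ is disjoint and bounded. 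This is the paper's route; it never passes through \text{\rm Mwc}.
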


\begin{proof}
(i)\ 
Take an order bounded $T\in\text{\rm aLwc}(E,F)$. For $T\in\text{\rm aLim}(E,F)$,  
it suffices to show $f_n(Tx_n)\to 0$ for every disjoint \text{\rm w}$^\ast$-null $(f_n)$ in $F^\ast$ 
and disjoint $(x_n)$ in $(B_E)_+$ (\cite[Theorem~2.7]{EMM13}). 
Let $(f_n)$ be disjoint \text{\rm w}$^\ast$-null in $F^\ast$ and let $(x_n)$ be disjoint in $(B_E)_+$.
Since $f_n\stackrel{w^\ast}{\to}0$, it is bounded. As $E^\ast$ is a \text{\rm KB}-space, $x_n\stackrel{w}{\to}0$. 
Since $T\in\text{\rm aLwc}(E,F)$ then $f_n(Tx_n)\to 0$ by \cite[Theorem~2.2]{BLM18}.

\medskip
(ii)\
Let $T\in \text{\rm aLwc}(E,F)$, and let $(f_n)$ be disjoint  w$^\ast$-null in $B_{F^\ast}$.
We need to show that $(f_n)$ is uniformly null on $T(B_E)$. 
Assume in contrary that $\sup\limits_{x\in B_E}|f_n(Tx)|\not\to 0$ as $n\to\infty$.
Then, for some  $\varepsilon >0$, there exists a sequence $(x_{n_k})$ in $B_E$ such that
$$
  |f_{n_k}(Tx_{n_k})|\geq\varepsilon \quad\quad (\forall k\in{\mathbb N}).
  \eqno(2)
$$
$B_E$ is weakly compact because $E$ is reflexive. So,
$x_{n_{k_j}}\stackrel{w}{\to} x\in E$ for some further subsequence $(x_{n_{k_j}})$
by the Eberlein -- Smulian theorem. 
Since $T\in \text{\rm aLwc}(E,F)$, then by \cite[Theorem~2.2]{BLM18}, $f_{n_{k_j}}(Tx_{n_{k_j}})\to 0$
which contradicts~(2).
Therefore $(f_n)$ converges uniformly to zero on $T(B_E)$, and  $T\in \text{\rm aLim}(E,F)$.
\end{proof}

\begin{theorem}\label{Proposition 3.2}%no
Let $F\in\text{\rm (d)}$. Then the following hold.
\begin{enumerate}[{\em(i)}]
\item
If $F\in\text{\rm (PGP)}$, then $\text{\rm span}(\text{\rm aMwc}_+(E,F))\subseteq{\rm aLim}(E,F)$.
\item
$l\text{\rm-Lwc}(E,F)\cap \text{\rm Mwc}(E,F)\subseteq\text{\rm aLim}(E,F)$ for every $E$.
\end{enumerate}
\end{theorem}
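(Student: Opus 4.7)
My plan is to derive both parts from the reduction used in the proof of Theorem~\ref{Proposition 3.1}(i): since $F\in(\text{d})$, \cite[Theorem~2.7]{EMM13} guarantees that $T\in\text{aLim}(E,F)$ as soon as $f_n(Tx_n)\to 0$ for every disjoint $\text{w}^\ast$-null $(f_n)\subseteq F^\ast$ and every disjoint $(x_n)\subseteq(B_E)_+$. I fix such $(f_n)$ and $(x_n)$ throughout.

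For (i), one checks directly from the definition that $\text{aLim}(E,F)$ is a linear subspace of $\text{L}(E,F)$, so by linearity it suffices to verify the conclusion for a single $T\in\text{aMwc}_+(E,F)$. The crucial step is to upgrade the disjoint $\text{w}^\ast$-null sequence $(f_n)$ to a weakly null sequence. Since $F\in(\text{d})$, $|f_n|\stackrel{\text{w}^\ast}{\to}0$; as each $|f_n|\ge 0$, the hypothesis $F\in(\text{PGP})$ then yields $|f_n|\stackrel{\text{w}}{\to}0$. From $0\le f_n^\pm\le |f_n|$, testing against an arbitrary $\xi\in F^{\ast\ast}$ (which is a difference of two positive elements of $F^{\ast\ast}$) gives $f_n^\pm\stackrel{\text{w}}{\to}0$, and hence $f_n\stackrel{\text{w}}{\to}0$. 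Feeding the weakly convergent $(f_n)$ and the disjoint bounded $(x_n)$ into the definition of $\text{aMwc}$ then produces $f_n(Tx_n)\to 0$, which is exactly what the reduction requires.

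For (ii), only the Mwc-hypothesis on $T$ enters the reduction: $T\in\text{Mwc}(E,F)$ produces $\|Tx_n\|\to 0$ on the disjoint sequence $(x_n)\subseteq(B_E)_+$, and since $(f_n)$ is $\text{w}^\ast$-null it is norm bounded, so $|f_n(Tx_n)|\le\|f_n\|\,\|Tx_n\|\to 0$.

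The only substantive obstacle lies in (i): the lattice-theoretic upgrade from $\text{w}^\ast$-nullity to weak nullity of the disjoint sequence $(f_n)$ under $(\text{d})\cap(\text{PGP})$. Everything else is a direct invocation of the respective definitions of $\text{aMwc}$ and $\text{Mwc}$ together with the standing reduction.
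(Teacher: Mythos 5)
Your proof is correct and takes essentially the same route as the paper's: both reduce, via \cite[Theorem~2.7]{EMM13}, to showing $f_n(Tx_n)\to 0$ for disjoint test sequences, both use $(\text{d})$ together with $(\text{PGP})$ to upgrade the disjoint $\text{w}^\ast$-null $(f_n)$ to a weakly null sequence in part (i), and both settle part (ii) using only the Mwc hypothesis together with the boundedness of $(f_n)$. The sole cosmetic difference is that the paper invokes the positive-functional form of the criterion directly while you carry out the reduction $f_n\mapsto|f_n|\mapsto f_n^{\pm}$ by hand; and, as you note, the $l$-Lwc hypothesis in (ii) plays no role in either argument.
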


\begin{proof}
(i)\ 
Let $T\in\text{\rm span}(\text{\rm aMwc}_+(E,F))$. We may suppose $T$ is positive. 
For proving $T\in\text{\rm aLim}(E,F)$,  
it suffices to show $f_n(Tx_n)\to 0$ for every disjoint \text{\rm w}$^\ast$-null $(f_n)$ in $F^\ast_+$ 
and disjoint $(x_n)$ in $(B_E)_+$ (\cite[Theorem~2.7]{EMM13}).
Let $(f_n)$ be disjoint \text{\rm w}$^\ast$-null in $F^\ast_+$ and  $(x_n)$ be disjoint in $(B_E)_+$.
As $F\in\text{\rm (PGP)}$,  $f_n\stackrel{w}{\to}0$. By Definition~\ref{E to Y}\,d),
$f_n(Tx_n)\to 0$. Consequently $T\in\text{\rm aLim}(E,F)$.

\medskip
(ii)\ 
Let $T\in l\text{\rm-Lwc}(E,F)\cap \text{\rm Mwc}(E,F)$.
We need to show $T(B_E)$ is almost limited.
It suffices to show $f_n(Tx_n)\to 0$ 
for every disjoint $(x_n)$ in $(B_E)_+$ and every 
disjoint  w$^\ast$-null $(f_n)$ in $B_{F^\ast}$
\cite[Theorem~2.7]{EMM13}. Let $(x_n)$ be disjoint in $(B_E)_+$ and $(f_n)$ be 
disjoint w$^\ast$-null in $B_{F^\ast}$.
Since $T\in l\text{\rm-Lwc}(E,F)$, then $(T^\ast f_n)\stackrel{w^\ast}{\to}0$ in $E^\ast$
by \cite[Theorem~2.7]{AEG_duality}. As $f_n\stackrel{w^\ast}{\to}0$, $\|f_n\|\le M$ for all $n$.
Since $T\in\text{\rm Mwc}$ then $\|Tx_n\|\to 0$, and hence 
$$
    |f_n(Tx_n)|\leq \|f_n\| \cdot \|Tx_n\|\leq M\cdot \|Tx_n\|
$$
implies $f_n(Tx_n)\to 0$ as desired. 
\end{proof}

\medskip\noindent
The following presents conditions for almost limited operators to be L-weakly compact.

\begin{theorem}\label{Proposition 3.6}%no
The following are equivalent.
\begin{enumerate}[\em (i)]
\item
$F$ has \text{\rm o}-continuous norm.
\item
For every $X$, $\text{\rm aLim}(X,F)\subseteq\text{\rm Lwc}(X,F)$.
\item
For every $E$, each  positive rank one operator $T:E\to F$ is $\text{\rm Lwc}$.
\end{enumerate}
\end{theorem}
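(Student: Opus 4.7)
The strategy is to prove the cycle $(\text{i})\Rightarrow(\text{ii})\Rightarrow(\text{iii})\Rightarrow(\text{i})$, with $(\text{i})\Rightarrow(\text{ii})$ being the substantive step. For $(\text{iii})\Rightarrow(\text{i})$, fix $y_0\in F_+$ and consider the positive rank one $T\colon\mathbb{R}\to F$ given by $T(t)=ty_0$. By hypothesis $T\in\text{\rm Lwc}(\mathbb{R},F)$, so the solid hull $[-y_0,y_0]$ of $T(B_{\mathbb{R}})$ is an \text{\rm Lwc} set and hence lies in $F^a$ (see Section~1). Thus $y_0\in F^a$, and since $y_0\in F_+$ was arbitrary, $F=F^a$. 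For $(\text{ii})\Rightarrow(\text{iii})$, a positive rank one $T\colon E\to F$ has $T(B_E)$ contained in a one-dimensional subspace of $F$, hence relatively compact, hence limited, hence almost limited; (ii) then gives $T\in\text{\rm Lwc}(E,F)$.

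For $(\text{i})\Rightarrow(\text{ii})$, let $F$ have \text{\rm o}-continuous norm and $T\in\text{\rm aLim}(X,F)$. By the disjoint-sequence characterization of \text{\rm Lwc} sets \cite[Theorem~5.63]{AlBu}, showing $T\in\text{\rm Lwc}(X,F)$ reduces to $\sup_{x\in B_X}|f_n(Tx)|\to 0$ for every disjoint bounded sequence $(f_n)\subseteq F^\ast$. Since $T\in\text{\rm aLim}(X,F)$ already supplies this conclusion when $(f_n)$ is $\text{\rm w}^\ast$-null, the problem reduces to the following bridge lemma: whenever $F$ is \text{\rm o}-continuous, every disjoint bounded $(f_n)\subseteq F^\ast$ is $\text{\rm w}^\ast$-null.

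To prove the bridge lemma, replace $f_n$ by $|f_n|$ and assume $f_n\geq 0$. Because $F$ is \text{\rm o}-continuous, $F$ is Dedekind complete and every $f\in F^\ast$ is order continuous; hence each $f_n$ admits a carrier band $C_n\subseteq F$ with complementary null band $N_n$, and the disjointness $f_n\wedge f_m=0$ in $F^\ast$ forces $C_n\perp C_m$ for $n\neq m$. Fix $x\in F_+$ and set $y_n=P_{C_n}x$. Then $0\leq y_n\leq x$ and $(y_n)$ is disjoint in $F_+$, so \text{\rm o}-continuity of the norm yields $\|y_n\|\to 0$. Since $f_n$ vanishes on $N_n$ and $x-y_n\in N_n$, we have $f_n(x)=f_n(y_n)$, whence $|f_n(x)|\leq\|f_n\|\cdot\|y_n\|\to 0$; this gives $f_n\stackrel{w^\ast}{\to}0$.

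The principal difficulty is the bridge lemma, where all three facets of \text{\rm o}-continuity of $F$ come into play simultaneously: Dedekind completeness of $F$ (so the band projections $P_{C_n}$ exist), order continuity of every $f\in F^\ast$ (so that carriers of dual elements are honest bands of $F$ and disjointness transfers to the bands), and the norm property $z_\alpha\downarrow 0\Rightarrow\|z_\alpha\|\to 0$ (to convert order boundedness of the disjoint $(y_n)$ by $x$ into norm convergence).
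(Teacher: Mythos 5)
Your proof is correct and follows the same cycle (i)$\Rightarrow$(ii)$\Rightarrow$(iii)$\Rightarrow$(i) as the paper; the difference lies in how the two nontrivial implications are discharged. For (i)$\Rightarrow$(ii) the paper is a one-line appeal to \cite[Theorem~2.6]{CCJ} (almost limited subsets of a Banach lattice with order continuous norm are L-weakly compact), whereas you reprove that fact from scratch: you reduce L-weak compactness of $T(B_X)$ to uniform vanishing of disjoint bounded sequences of functionals via the Burkinshaw--Dodds duality \cite[Theorem~5.63]{AlBu}, and then establish the bridge lemma that order continuity of the norm of $F$ forces every disjoint bounded sequence in $F^\ast$ to be weak$^\ast$-null (i.e., $F\in(\text{DDw}^\ast\text{P})$) --- a fact the paper uses elsewhere (Propositions~\ref{Proposition 2.17} and~\ref{Proposition 3.16}) but never proves. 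Your carrier-band argument for that lemma is sound: order continuity supplies Dedekind completeness, the identity $F^\ast=F^\sim_n$, the transfer of disjointness to carriers, and the norm-nullity of the disjoint order bounded sequence $(P_{C_n}x)$. For (iii)$\Rightarrow$(i) the paper cites \cite[Theorem~5.66]{AlBu}; your direct argument with $E=\mathbb{R}$ and the fact that Lwc sets lie in $F^a$ is an acceptable elementary substitute. The net effect is a self-contained proof at the cost of length; no gaps.
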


\begin{proof}
(i)$\Longrightarrow$(ii):
Let $T\in\text{\rm aLim}(X,F)$. Then $T(B_X)$ is an \text{\rm aLim}-set in $F$. 
By \cite[Theorem~2.6]{CCJ}, $T(B_X)$ is an \text{\rm Lwc}-set, and hence $T\in\text{\rm Lwc}(X,F)$.

\medskip
(ii)$\Longrightarrow$(iii): It is trivial.

\medskip
(iii)$\Longrightarrow$(i):
Since, for each $y\in F_+$, there exists a positive rank one operator $T:E\to F$ such that
$T(E)=\text{\rm span}(y)$, then $y\in F^a$ by \cite[Theorem~5.66]{AlBu}, and hence
$F$ has \text{\rm o}-continuous norm.
\end{proof}

\noindent
Order continuity of the norm in $F$ is essential above as the identity of $\ell^\infty$ is
almost limited but it is neither M- or L-weakly compact.  

\begin{theorem}\label{Proposition 3.18} 
Let $E$ be an \text{\rm AL}-space and let $F$ have \text{\rm o}-continuous norm.
Then each $T\in\text{\rm aLim}(E,F)$ has a modulus $T$ which is almost limited.
\end{theorem}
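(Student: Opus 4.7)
The plan is to reduce the problem to an L-weakly compact statement and then exploit the AL-norm to confine $|T|(B_E)$ inside an Lwc set via the Riesz--Kantorovich formula. First, since $F$ has o-continuous norm, Theorem~\ref{Proposition 3.6} gives $T\in\text{\rm Lwc}(E,F)$, so $T(B_E)$ is L-weakly compact, equivalently (as $F=F^a$) almost order bounded in $F$. Conversely, every L-weakly compact subset of $F$ is almost limited, because a disjoint w$^\ast$-null sequence in $F^\ast$ is in particular disjoint and norm bounded. Hence it suffices to prove that $|T|$ exists as an operator $E\to F$ and that $|T|(B_E)$ is L-weakly compact.

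For the existence, I would use the fact that $E^\ast$ is an AM-space with unit (because $E$ is AL) together with the o-continuity of the norm on $F$ to define
\[
|T|(u)=\sup\Bigl\{\textstyle\sum_{i=1}^n|Ty_i|:y_i\in E,\ \sum_{i=1}^n|y_i|=u,\ n\in\mathbb{N}\Bigr\},\qquad u\in E_+,
\]
the supremum being attained as the norm-limit of its directed defining family. Now, for any $u\in B_{E_+}$ and any decomposition $\sum_i|y_i|=u$, the AL-norm identity gives $\sum_i\|y_i\|=\|u\|\le 1$, so $y_i/\|y_i\|\in B_E$ and $|Ty_i|/\|y_i\|\in\text{\rm sol}(T(B_E))$. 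Writing
\[
\textstyle\sum_i|Ty_i|=\sum_i\|y_i\|\cdot\bigl(|Ty_i|/\|y_i\|\bigr)
\]
displays this element as a member of $\text{\rm conv}\bigl(\text{\rm sol}(T(B_E))\cup\{0\}\bigr)$ (total coefficient $\le 1$, pad with $0$). Since $T(B_E)$ is Lwc, so is $\text{\rm sol}(T(B_E))$; almost order boundedness in $F=F^a$ is preserved under convex hulls (sums of convex sets are convex) and under norm closures (order intervals in $F$ are weakly compact, so the Minkowski sum $[-u_\varepsilon,u_\varepsilon]+\varepsilon B_F$ is norm closed). Passing to the norm-limit thus places $|T|u$ inside the Lwc set $\overline{\text{\rm conv}(\text{\rm sol}(T(B_E)))}$, uniformly in $u\in B_{E_+}$. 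So $|T|(B_{E_+})$ is Lwc, and the AL-decomposition $B_E\subseteq B_{E_+}-B_{E_+}$ (from $\|x_+\|+\|x_-\|=\|x\|$) gives $|T|(B_E)$ Lwc as well.

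The main obstacle is the existence of $|T|$ as an operator valued in $F$ (rather than in a Dedekind completion $F^\delta$ or in $F^{\ast\ast}$): the o-continuity of the norm on $F$ only yields Dedekind $\sigma$-completeness, so the Riesz--Kantorovich supremum is not automatic. Here one must show that the directed family $\{\sum_i|Ty_i|:\sum_i|y_i|=u\}$ is already almost order bounded in $F=F^a$ (inherited from the Lwc-ness of $T(B_E)$ by the scaling argument above) and then invoke o-continuity to conclude that it is norm-Cauchy, hence convergent in $F$ to the sought supremum.
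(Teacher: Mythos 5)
Your argument is correct and shares the paper's overall architecture: both proofs first convert almost limitedness of $T$ into L-weak compactness of $T(B_E)$ via the o-continuity of the norm on $F$ (Theorem~\ref{Proposition 3.6}), then establish that the modulus exists and is Lwc, and finally return to almost limitedness through the general fact that norm bounded disjoint sequences in $F^\ast$ converge uniformly to zero on Lwc sets. The difference lies in the middle step: the paper simply cites Chen--Wickstead (\cite[Theorem~2.4]{CW99}) for the existence and L-weak compactness of $|T|$ when the domain is an AL-space, whereas you reprove this from scratch via the Riesz--Kantorovich formula, using the additivity of the AL-norm ($\sum_i\|y_i\|=\|u\|\le 1$ whenever $\sum_i|y_i|=u\in B_{E_+}$) to place every candidate $\sum_i|Ty_i|$ in the closed convex hull of $\sol(T(B_E))\cup\{0\}$, which is Lwc because L-weak compactness coincides with almost order boundedness inside $F=F^a$ and the latter survives convex hulls and closures; o-continuity then lets you extract the supremum as the norm limit of the upward directed family. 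All the auxiliary facts you invoke are sound, and the one point you flag as the main obstacle (norm convergence of the directed family) is handled correctly. What your route buys is a self-contained proof of the cited modulus result; what it costs is the extra (routine but necessary) verification that the resulting map is additive on $E_+$ and hence extends to the linear operator $|T|$, which the citation packages for free.
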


\begin{proof}
As $F^a=F$ and $T$ is almost limited, then $T$ is Lwc. 
Hence $T$ has the modulus $|T|$ that is Lwc by \cite[Theorem~2.4]{CW99}. 
Therefore $|T|$ is almost limited by \cite[Theorem~2.6]{CCJ}.
\end{proof}

\noindent
However \cite[Theorem~2.2]{CW99} shows there is a regular compact (hence limited) operator  
from $E = L^2[0,1]$ to $F = c(L^2[0,1])$ which is both M- and L-weakly compact whose modulus does not exist. 
The identity of $\ell^\infty$ is almost limited but it is neither M- nor L-weakly compact.

\begin{theorem}\label{Proposition 3.10}%no
Suppose $E$ is infinite dimensional and has \text{\rm o}-continuous norm, and $F\in\text{\rm (d)}$. 
If $\text{\rm aLim}(E,F)\subseteq \text{\rm Mwc}(E,F)$, 
then $E^\ast$ and $F$ both have order continuous norms.
\end{theorem}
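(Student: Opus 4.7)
The plan is to derive the two conclusions separately: the first directly, by testing the hypothesis on rank-one operators; the second by contradiction, by building an $\text{\rm aLim}$ operator that fails to be $\text{\rm Mwc}$ when $F$ is not $\text{\rm o}$-continuous.

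For $E^\ast$, fix any $y_0\in F_+\setminus\{0\}$; then for every $\phi\in E^\ast$ the rank-one operator $Tx:=\phi(x)y_0$ is compact, hence limited, hence almost limited. The hypothesis forces $T\in\text{\rm Mwc}(E,F)$, so $|\phi(x_n)|\,\|y_0\|=\|Tx_n\|\to 0$ for every disjoint bounded $(x_n)\subseteq E$. Since $\phi$ is arbitrary, every disjoint bounded sequence in $E$ is weakly null, and Lemma~\ref{E' is o-cont} yields that $E^\ast$ has $\text{\rm o}$-continuous norm.

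For $F$, suppose it is not $\text{\rm o}$-continuous and fix $y\in F_+$, a disjoint sequence $(y_n)\subseteq[0,y]$, and $\varepsilon>0$ with $\|y_n\|\geq\varepsilon$. Since $E$ is infinite-dimensional with $\text{\rm o}$-continuous norm it is Dedekind complete and contains a normalized disjoint $(e_n)\subseteq E_+$. Let $P_n$ be the band projection onto the band generated by $e_n$; combining these with Hahn--Banach I would produce a biorthogonal system $(e_n^\ast)\subseteq E^\ast$ with $\|e_n^\ast\|\leq 1$ and $e_n^\ast(e_m)=\delta_{nm}$. Since $\sum_n P_n x$ is increasing and order bounded on $x_\pm$, it converges in order, and hence in norm by $\text{\rm o}$-continuity of $E$, giving $|e_n^\ast(x)|\leq\|P_n x\|\to 0$ for every $x\in E$.

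I would then define $T:E\to F$ by $Tx:=\sum_n e_n^\ast(x)y_n$. Disjointness of $(y_n)$ combined with $y_n\leq y$ yields
\[
|T_N x-T_M x|=\sum_{n=M+1}^N|e_n^\ast(x)|\,y_n\leq\Bigl(\sup_{n>M}|e_n^\ast(x)|\Bigr)y,
\]
so $(T_N x)$ is norm-Cauchy, $T$ is bounded, and $|Tx|\leq\|x\|y$, which forces $T(B_E)\subseteq[-y,y]$. Since $F\in(\text{\rm d})$, the order interval $[-y,y]$ is almost limited (as noted at the beginning of Section~3), so $T\in\text{\rm aLim}(E,F)$. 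On the other hand $Te_n=y_n$ with $\|Te_n\|\geq\varepsilon$ on the disjoint bounded $(e_n)$, so $T\notin\text{\rm Mwc}(E,F)$, contradicting the hypothesis. The main obstacle is the simultaneous control of partial-sum convergence and of the evaluations $e_n^\ast(x)\to 0$; both rest on $\text{\rm o}$-continuity of $E$ together with Dedekind completeness to supply band projections onto the principal bands generated by the $e_n$.
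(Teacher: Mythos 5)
Your proof is correct, but it reaches both conclusions by a more self-contained route than the paper. For the order continuity of the norm on $E^\ast$, the paper simply observes that the hypothesis gives $\text{\rm K}(E,F)\subseteq\text{\rm Mwc}(E,F)$ and cites \cite[Theorem~2]{EAS}; your rank-one test $Tx=\phi(x)y_0$ proves the needed special case of that citation directly (it silently assumes $F\neq\{0\}$, but so does the theorem itself). For the order continuity of the norm on $F$, the paper builds essentially the same counterexample operator: it takes a normalized disjoint $(x_n)$ in $E_+$, invokes \cite[Theorem~116.3\,(iii)]{ZaII} to produce a disjoint biorthogonal system $(g_n)$ in $(B_{E^\ast})_+$ and \cite[Theorem~116.2]{ZaII} for its $\text{\rm w}^\ast$-nullity, then factors $T=SR$ through $c_0$ via $Rx=(g_k(x))_k$ and $S\alpha=\sum_k\alpha_k u_k$, so that $T(B_E)\subseteq[-u,u]$ is almost limited by property (d) while $Tx_n=g_n(x_n)u_n$ stays bounded away from zero in norm. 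You replace the two Zaanen citations by an explicit construction $e_n^\ast=f_n\circ P_n$ using band projections (legitimate, since $\text{\rm o}$-continuity of the norm makes $E$ Dedekind complete) and obtain the pointwise nullity $e_n^\ast(x)\to 0$ from the norm convergence of $\sum_nP_n|x|$; you also fold the $c_0$-factorization into a single series, which is the same operator in disguise. What your version buys is elementary self-containment --- in particular you never need the functionals to be pairwise disjoint, only uniformly bounded and pointwise null --- at the cost of a somewhat longer argument; the paper's version is shorter but leans on external results. The only steps worth making explicit in a final write-up are the standard facts you use tacitly: that an infinite-dimensional Banach lattice contains an infinite disjoint normalized positive sequence (the paper uses this too, without citation), and that $|P_nx|=P_n|x|$ for band projections, which is what lets you pass from the norm convergence of $\sum_nP_n|x|$ to $\|P_nx\|\to 0$ and hence to $|e_n^\ast(x)|\to 0$.
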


\begin{proof}
If $\text{\rm aLim}(E,F)\subseteq \text{\rm Mwc}(E,F)$ then $\text{\rm K}(E,F)\subseteq \text{\rm Mwc}(E,F)$.
The fact that the norm in $E^\ast$ is o-continuous follows from \cite[Theorem~2]{EAS}.

Suppose now the norm of $F$ is not \text{\rm o}-continuous. 
Then there exist some $u\in F_+$ and a disjoint sequence $(u_n)$ in $[0,u]$ 
such that $\|u_n\|\not\to 0$. 
We may assume $\|u_n\| = 1$  for all $n$. Since $\dim(E)=\infty$, 
there exists a disjoint normalized sequence $(x_n)$ in $E_+$.
By \cite[Theorem~116.3\,(iii)]{ZaII},  there exists a disjoint sequence 
$(g_n)$ in $(B_{E^\ast})_+$ such that $\frac{1}{2}\leq g_n(x_n) \leq 1$ for all $n$ and
$g_n(x_m) = 0$ for $n\ne m$.
Since $E$ has \text{\rm o}-continuous norm, then $g_n\stackrel{w^\ast}{\to} 0$ by \cite[Theorem~116.2]{ZaII}. 

Define positive operator $R: E\to c_0$  by $Rx = (g_k(x))_{k=1}^\infty$. 
Clearly $R(B_E) \subseteq B_{c_0}$. 
Define positive operator $S: c_0\to F$ by  $S\alpha = \sum_n \alpha_n u_n$. 
Then $S(B_{c_0})\subseteq [-u,u]$, and hence $SR(B_E)\subseteq [-u,u]$. 
As order intervals in $F$ are almost limited by the (d)-property of $F$,  $SR(B_E)$ is almost limited, and 
$SR\in\text{\rm aLim}(E,F)$. 
Since $SR(x_n) = S(g_n(x_n)\cdot e_n)=g_n(x_n)\cdot u_n$, then $\|SR(x_n)\|\geq |g_n(x_n)|\geq\frac{1}{2}$
for all $n$. Therefore $SR\not\in\text{\rm Mwc}(E,F)$. A contradiction.
\end{proof}

\medskip\noindent
An operator $T:E\to F$ is called {\em positively limited} if $\|T^\ast f_n\|\to 0$ for each \text{\rm w}$^\ast$-null $(f_n)$ in $F_+^\ast$.
We denote the set of such operators by $\text{\rm p-Lim}(E,F)$. We need the following lemma~\cite[Theorem~2.2]{ArdaChen23}. 

\begin{lemma}\label{Lemma33}%OK
$T:E\to F$ is positively limited iff $T^\ast(y^\ast_n)(x_n)\to 0$ for each 
$y_n^\ast\stackrel{w^\ast}{\to}0$ in $F^\ast_+$
 and every disjoint bounded $(x_n)$ in $E_+$.
\end{lemma}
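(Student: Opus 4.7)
The plan is to handle the two implications separately. The forward direction is immediate: if $T$ is positively limited, then $\|T^\ast y_n^\ast\|\to 0$ for every $\text{w}^\ast$-null $(y_n^\ast)\subset F^\ast_+$, and the estimate $|T^\ast(y_n^\ast)(x_n)|\leq \|T^\ast y_n^\ast\|\cdot\|x_n\|\to 0$ holds for every bounded $(x_n)\subset E_+$; note that disjointness of $(x_n)$ is not even used here.

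For the converse I would argue contrapositively. Assuming $T\notin \text{p-Lim}(E,F)$, there is a $\text{w}^\ast$-null $(f_n)$ in $F^\ast_+$ with $\|T^\ast f_n\|\not\to 0$; after passing to a subsequence, fix $\varepsilon>0$ with $\|T^\ast f_n\|>2\varepsilon$. Since the dual norm on $E^\ast$ is a lattice norm, $\|T^\ast f_n\|=\||T^\ast f_n|\|=\sup\{|T^\ast f_n|(z):z\in (B_E)_+\}$, so I select $z_n\in (B_E)_+$ with $|T^\ast f_n|(z_n)>2\varepsilon$. The Riesz--Kantorovich formula $|T^\ast f_n|(z_n)=\sup\{f_n(Ty):|y|\leq z_n\}$ then produces $y_n\in E$ with $|y_n|\leq z_n$ and $f_n(Ty_n)>\varepsilon$. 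Splitting $y_n=y_n^+-y_n^-$ and passing to a further subsequence so that one of $f_n(Ty_n^+)$ or $-f_n(Ty_n^-)$ exceeds $\varepsilon/2$, I obtain $u_n\in (B_E)_+$ with $|f_n(Tu_n)|>\varepsilon/2$.

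The remaining and crucial step is to convert $(u_n)$ into a disjoint bounded sequence $(v_k)$ in $E_+$ preserving the lower bound $|f_{n_k}(Tv_k)|\not\to 0$, which would contradict the hypothesis. Here one exploits that $f_n\stackrel{w^\ast}{\to}0$ gives $f_n(Tu_k)\to 0$ as $n\to\infty$ for each fixed $k$, which permits a diagonal selection $n_1<n_2<\cdots$ such that $|f_{n_{k+1}}(Tu_{n_j})|<\varepsilon\cdot 2^{-j-3}$ for all $j\leq k$, so that $f_{n_k}$ asymptotically vanishes on the earlier $Tu_{n_j}$. A sliding-hump disjointification -- for instance, setting $v_k=(u_{n_k}-u_{n_1}-\cdots-u_{n_{k-1}})^+$, or applying band projections carried out in the order completion $E^{\ast\ast}$ inside the principal ideal generated by $\sum_{k} 2^{-k}u_{n_k}$ -- should then yield an (almost) disjoint bounded sequence $v_k\leq u_{n_k}$ in $E_+$ with $|f_{n_k}(Tv_k)|>\varepsilon/4$.

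The key obstacle I anticipate is precisely this disjointification: the naive positive-part formula does not produce genuinely disjoint elements in an arbitrary Banach lattice lacking Dedekind $\sigma$-completeness, so one must either verify that the vectors obtained by band projections in $E^{\ast\ast}$ actually lie in $E$, or invoke a Kadec--Pelczynski / Wnuk-type almost-disjoint extraction lemma for bounded positive sequences to close the argument cleanly.
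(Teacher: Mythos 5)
The paper itself does not prove this lemma: it is quoted from \cite[Theorem~2.2]{ArdaChen23}, so there is no internal argument to compare against and your attempt must stand on its own. Your forward implication is complete and correct (and you rightly observe that disjointness is not needed there). The converse, however, is not closed, and the place where it stops is exactly the content of the lemma. After producing $u_n\in(B_E)_+$ with $|f_n(Tu_n)|>\varepsilon/2$, you still owe a disjoint bounded positive sequence on which the lower bound survives, and each of your suggested devices fails as stated. (a) The vectors $v_k=(u_{n_k}-u_{n_1}-\cdots-u_{n_{k-1}})^+$ are not pairwise disjoint in general; the construction that does work in an arbitrary Banach lattice is $w_k=\bigl(u_{n_{k+1}}-4^k\sum_{i\le k}u_{n_i}-2^{-k}u\bigr)^+$ with $u=\sum_k2^{-k}u_{n_k}$ as in \cite[Lemma~4.35]{AlBu}, so your worry about Dedekind $\sigma$-completeness is misplaced, but the formula you propose is the wrong one. (b) Band projections computed in $E^{\ast\ast}$ yield disjoint elements of $E^{\ast\ast}$ with no reason to lie in $E$. (c) Most seriously, your diagonal selection controls only the scalars $|f_{n_{k+1}}(Tu_{n_j})|$, whereas $0\le u_{n_k}-v_k\le\sum_{j<k}u_{n_j}$ is only \emph{dominated} by earlier terms, so bounding $|f_{n_k}(T(u_{n_k}-v_k))|$ requires an estimate on $|T^\ast f_{n_k}|(u_{n_j})=\sup\{f_{n_k}(Ty):|y|\le u_{n_j}\}$, which does not tend to zero merely because $f_{n}\stackrel{w^\ast}{\to}0$.

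A more promising organization is to work with moduli from the outset: first deduce from the hypothesis that $|T^\ast f_n|(z_n)\to0$ for \emph{every} disjoint bounded $(z_n)$ in $E_+$ (choose $|y_n|\le z_n$ nearly attaining $|T^\ast f_n|(z_n)$ and split $y_n=y_n^+-y_n^-$ into two disjoint positive bounded sequences, to each of which the hypothesis applies), then pick $x_n\in(B_E)_+$ with $|T^\ast f_n|(x_n)>\varepsilon$, disjointify by \cite[Lemma~4.35]{AlBu}, and estimate $|T^\ast f_{n_{k+1}}|(x_{n_{k+1}})$ against $|T^\ast f_{n_{k+1}}|(w_k)$ plus the correction terms. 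Even then, the subsequence selection making $4^k\sum_{i\le k}|T^\ast f_{n_{k+1}}|(x_{n_i})$ small must be justified, and that is the genuine analytic content of the cited theorem. As written, the hard direction of your proposal is a plan rather than a proof.
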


\begin{theorem}\label{Proposition 3.12}%OK
Suppose $E^\ast$ and $F$ both have \text{\rm o}-continuous norms, $F$ has limited order intervals,
and each disjoint bounded sequence of $E$ is order bounded. Then each regular operator $T:E\to F$ is almost limited.
\end{theorem}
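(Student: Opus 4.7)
The plan is to invoke the sequential characterization of almost limited operators used in the proofs of Theorem~\ref{Proposition 3.2} (based on \cite[Theorem~2.7]{EMM13}): to show $T(B_E)$ is almost limited, it suffices to verify that $f_n(Tx_n)\to 0$ for every disjoint $\text{\rm w}^\ast$-null $(f_n)\subseteq B_{F^\ast}$ and every disjoint $(x_n)\subseteq(B_E)_+$. So fix such sequences.

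Since $T$ is regular, write $T=T_1-T_2$ with $T_1,T_2\in\text{\rm L}_+(E,F)$. The sequence $(x_n)$ is disjoint and norm bounded in $E$, so by hypothesis it is order bounded: there exists $u\in E_+$ with $x_n\in[0,u]$ for all $n$. Setting $v:=T_1u+T_2u\in F_+$, we obtain
$$
   Tx_n=T_1x_n-T_2x_n\in[-T_2u,T_1u]\subseteq[-v,v] \qquad (\forall n\in{\mathbb N}).
$$

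By hypothesis $F$ has limited order intervals, so $[-v,v]$ is a limited subset of $F$. As $(f_n)$ is $\text{\rm w}^\ast$-null, the definition of a limited set gives $\sup_{y\in[-v,v]}|f_n(y)|\to 0$, and in particular $f_n(Tx_n)\to 0$, which completes the argument.

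The main obstacle is justifying the sequential reduction in the first paragraph for a not necessarily positive regular $T$. The $\text{\rm o}$-continuity hypotheses on $E^\ast$ and $F$ should support this reduction: $E^\ast$ being a \text{\rm KB}-space (Lemma~\ref{E' is o-cont}) forces disjoint bounded sequences in $E$ to be weakly null, and $\text{\rm o}$-continuity of the norm on $F$ places us in the aLim/\text{\rm Lwc} framework (cf.\ Theorem~\ref{Proposition 3.6}). If necessary, one can first reduce to the case $T\geq 0$ by splitting $T=T_1-T_2$ and using that the class of almost limited operators is a linear subspace of $\text{\rm L}(E,F)$, then apply the cleaner positive version of the characterization as in Theorem~\ref{Proposition 3.2}\,(i).
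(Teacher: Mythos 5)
Your argument is correct, but it takes a genuinely different route from the paper's. The paper factors the positive parts of $T$ as $T=SR$ through a Banach lattice $G$ with $G$ and $G^\ast$ both having \text{\rm o}-continuous norms (this is where the \text{\rm o}-continuity hypotheses actually enter), proves that $R$ is \emph{positively limited} via Lemma~\ref{Lemma33} using the order-boundedness of disjoint bounded sequences, and then passes back to $T$ by combining the \text{\rm w}$^\ast$-sequential continuity of the lattice operations of $F^\ast$ (a consequence of $F$ having limited order intervals, via Lemma~\ref{Lemma 3}\,(iii) and Lemma~\ref{w*cont lo in E}) with the modulus inequalities $|R^\ast S^\ast u^\ast_n|\le R^\ast S^\ast|u^\ast_n|$, so that $\|T^\ast u^\ast_n\|\to 0$ is verified directly from the definition. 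You bypass the factorization entirely: the disjoint-sequence test of \cite[Theorem~2.7]{EMM13} reduces everything to showing $f_n(Tx_n)\to 0$, and the hypothesis on $E$ places the disjoint sequence $(x_n)\subseteq (B_E)_+$ inside an order interval, so $Tx_n$ lands in a fixed limited interval $[-v,v]$ of $F$ on which the \text{\rm w}$^\ast$-null $(f_n)$ is uniformly null. This is shorter and makes transparent which hypotheses do the work; indeed the \text{\rm o}-continuity of the norms of $E^\ast$ and $F$ never appears in your estimate (the former is automatic anyway, as the remark after the theorem notes, since the assumption on disjoint sequences forces $E$ to be an \text{\rm AM}-space). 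The one point to tighten is the justification of the reduction itself: what licenses \cite[Theorem~2.7]{EMM13} here is not the \text{\rm KB}-ness of $E^\ast$ or the \text{\rm o}-continuity of $F$, as your closing paragraph suggests, but that $T$ is order bounded (being regular) and that $F\in\text{\rm (d)}$ --- the latter again following from the limitedness of order intervals in $F$ via Lemma~\ref{Lemma 3}\,(iii) and Lemma~\ref{w*cont lo in E}; this is exactly the setting in which the paper invokes the same citation in Theorems~\ref{Proposition 3.1} and~\ref{Proposition 3.2}. If you prefer to apply the characterization only to positive operators, your fallback is sound: split $T=T_1-T_2$, note that $\text{\rm aLim}(E,F)$ is a linear subspace because sums of almost limited sets are almost limited, and run your interval estimate on each $T_i$ separately.
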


\begin{proof}
Suppose $T:E\to F$ is positive. $T$ admits a factorization over a Banach lattice $G$ 
with $G$ and $G^\ast$ both have \text{\rm o}-continuous norms, say $T = SR$, 
where $R$ and $S$ are both positive as in~\cite[Problem~15 on page~314]{AlBu}. 
We claim $R$ is positively limited. Let $(z_n^\ast)$ be a positive \text{\rm w}$^\ast$-null sequence in $G^\ast$ 
and $(x_n)$ be a positive disjoint bounded sequence in $E$. 
By the assumption, there exists $x$ with $0\leq x_n\leq x$ for each $n$. As $R$ is positive, $0\leq Rx_n \leq Rx$. 
Thus $0\leq R^\ast(z_n^\ast)(x_n)\leq z_n^\ast(Rx_n)\leq z_n^\ast(Rx)$ 
which shows $R^\ast(z_n^\ast)(x_n)\to 0$. By Lemma~\ref{Lemma33}, we conclude that $R$ is positively limited.

Now we show $T=SR\in\text{\rm aLim}(E,F)$. 
Let  $(u_n^\ast)$ be disjoint \text{\rm w}$^\ast$-null in $F^\ast$. 
Since $F^\ast$ has \text{\rm w}$^\ast$-continuous lattice operations, $|u_n^\ast|\stackrel{w^\ast}{\to}0$ in $F^\ast$.  
As $R$ is positively limited, $\|R^\ast S^\ast |u^\ast_n|\|\to 0$.  
Since $R^\ast |S^\ast u^\ast_n|\leq R^\ast S^\ast |u^\ast_n|$, we have
$$
|R^\ast S^\ast u^\ast_n| \leq R^\ast |S^\ast u^\ast_n| \leq  R^\ast S^\ast |u^\ast_n|.
$$
Resulting that $\|R^\ast S^\ast u^\ast_n\| = \|T^\ast u_n\|\to 0$. It follows $T\in\text{\rm aLim}(E,F)$.
\end{proof}

\noindent
It is worth noting that both conditions on $E$ in Theorem~\ref{Proposition 3.12} can be replaced by a single condition that
$E$ is an \text{\rm AM}-space in which every disjoint bounded sequence is order bounded. Indeed, by the Abramovich result~\cite{AB78},
if $E$ is not isomorphic to an \text{\rm AM}-space then $E$ contains a positive disjoint normalized sequence $(x_n)$ such that $\|\vee_{n=1}^{m}x_n\|\to\infty$,
and hence such a sequence $(x_n)$ can not be order bounded \cite{AB78}. Thus, $E$ must be (isomorphic to) an \text{\rm AM}-space, 
and hence the norm in $E^\ast$ is \text{\rm o}-continuous.
Also note that this condition on $E$ is weaker than the condition that $E$ is an \text{\rm AM}-space with unit, as the 
space $C_{\omega}(\Gamma)$ of real-valued bounded countably supported functions on an uncountable set $\Gamma$ still satisfies the condition 
that every disjoint bounded sequence is order bounded, yet $C_{\omega}(\Gamma)$ has not got even a weak unit.

\begin{theorem}\label{Proposition 3.13}%no
Let $T:E\to E$ be a positive operator  in the ideal center of $E$. Suppose $E\in\text{\rm (DPSP)}$. Then $T$ is positively limited.
\end{theorem}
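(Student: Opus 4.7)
The plan is to combine a uniform norm bound on $T^{\ast}$ coming from the ideal-center hypothesis with the DPSP hypothesis applied directly to test sequences in $E_{+}^{\ast}$. Since $T$ is positive and belongs to $Z(E)$, by the very definition of the ideal center there exists a scalar $c\ge 0$ such that $0\le T\le c\cdot\text{\rm Id}_{E}$. Taking adjoints gives $0\le T^{\ast}\le c\cdot\text{\rm Id}_{E^{\ast}}$, and in particular $\|T^{\ast}\|\le c$.

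On the other hand, as used in Example~\ref{DPSP but not DSP}, the hypothesis $E\in(\text{\rm DPSP})$ is exactly the assertion that every positive $\text{\rm w}^{\ast}$-null sequence in $E^{\ast}$ is norm null. Concretely I would take an arbitrary $\text{\rm w}^{\ast}$-null sequence $(f_{n})\subseteq E_{+}^{\ast}$, invoke DPSP to deduce $\|f_{n}\|\to 0$, and then combine this with the bound on $T^{\ast}$ to conclude
$$
   \|T^{\ast}f_{n}\|\ \le\ \|T^{\ast}\|\cdot\|f_{n}\|\ \le\ c\,\|f_{n}\|\ \longrightarrow\ 0,
$$
which is precisely the defining condition of $T\in\text{\rm p-Lim}(E,E)$.

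There is no substantial technical obstacle here; once the two hypotheses are unpacked, the argument is essentially one line. The only point worth emphasizing is the division of labour between them: the role of the ideal-center hypothesis is to package the explicit uniform bound $\|T^{\ast}\|\le c$ (mere positivity of $T$ already yields boundedness, but the ideal center supplies a clean constant), while the full weight of the norm-nullness comes from DPSP. In particular, no appeal to Lemma~\ref{Lemma33} is needed, and one does not have to pass through disjoint test sequences in $E_{+}$.
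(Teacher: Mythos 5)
Your proof is correct and follows essentially the same route as the paper's: both arguments reduce to the observation that $E\in(\text{\rm DPSP})$ forces $\|f_n\|\to 0$ for any positive $\text{\rm w}^\ast$-null $(f_n)$, after which the bound $0\le T\le c\,\text{\rm Id}_E$ from the ideal-center hypothesis (the paper normalizes to $0\le T\le \text{\rm Id}_E$ and uses $0\le T^\ast f_n\le f_n$ instead of $\|T^\ast\|\le c$) gives $\|T^\ast f_n\|\to 0$. Your closing remark is also accurate: once DPSP is unpacked, mere boundedness of $T$ suffices and no disjointness or appeal to Lemma~\ref{Lemma33} is needed.
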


\begin{proof} 
For simplicity suppose $0\leq T\leq I$. Let $(f_n)$ be a positive \text{\rm w}$^\ast$-null sequence in $E^\ast$. 
Then  $0\leq T^\ast \leq I$. Hence $0\leq T^\ast f_n \leq f_n$ for all $n$. 
Since $E\in\text{\rm (DPSP)}$,  $\|f_n\|\to 0$ hence  $T^\ast f_n\to 0$.
\end{proof}

It is easy to see that $\text{\rm p-Lim}(E,F)$ is a vector space.  We omit the proof of the next easy fact.

\begin{proposition}\label{p-Lim-closed}
{\em Suppose $\|T_n-T\|\to 0$ and $T_n\in\text{\rm p-Lim}(E,F)$ for all $n$. Then $T\in\text{\rm p-Lim}(E,F)$.}
\end{proposition}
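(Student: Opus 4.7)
The plan is to use the standard $\varepsilon/2$ triangle-inequality argument that shows any operator class defined by ``$\|T^\ast f_n\|\to 0$ on a bounded class of sequences'' is closed under operator-norm limits.

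First, I would fix a positive $\text{w}^\ast$-null sequence $(f_n)$ in $F^\ast_+$. By the uniform boundedness principle (applied to $(f_n)$ as a pointwise-bounded family of functionals on $F$), there exists a constant $M>0$ with $\|f_n\|\le M$ for every $n$. This boundedness is what makes the approximation argument work.

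Next, fix $\varepsilon>0$. Using $\|T_n-T\|\to 0$, choose $k$ so large that $\|T_k-T\|<\varepsilon/(2M)$. Since $T_k\in\text{\rm p-Lim}(E,F)$, the positive $\text{w}^\ast$-null sequence $(f_n)$ satisfies $\|T_k^\ast f_n\|\to 0$, so there exists $N$ such that $\|T_k^\ast f_n\|<\varepsilon/2$ for all $n\ge N$. Combining via the triangle inequality,
$$
\|T^\ast f_n\|\le \|(T-T_k)^\ast f_n\|+\|T_k^\ast f_n\|\le \|T-T_k\|\cdot\|f_n\|+\|T_k^\ast f_n\|<\tfrac{\varepsilon}{2M}\cdot M+\tfrac{\varepsilon}{2}=\varepsilon
$$
for all $n\ge N$. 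Hence $\|T^\ast f_n\|\to 0$, proving $T\in\text{\rm p-Lim}(E,F)$.

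There is no real obstacle here; the only point to be careful about is invoking the uniform boundedness principle to obtain a uniform norm bound on $(f_n)$, since positive $\text{w}^\ast$-null sequences are a priori only pointwise convergent to $0$, not norm-bounded by definition. Once that is in hand, the rest is a two-line approximation, which is why the authors omit it.
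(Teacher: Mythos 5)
Your proof is correct, and since the paper explicitly omits the proof of this proposition as an ``easy fact,'' your standard $\varepsilon/2$ approximation argument (with the uniform boundedness principle supplying the norm bound on the w$^\ast$-null sequence) is exactly the argument the authors intend the reader to supply. Nothing to add.
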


\noindent
The following assertion (see, \cite[Theorem 2.4]{AEG_env} and \cite[Proposition 1.10]{EG23aff}).

\begin{assertion}\label{enveloping norm}
{\em
If $\text{\rm P}$ is a norm closed subspace of $\text{\rm L}(E,F)$ then $\text{\rm span}\bigl(\text{\rm P}_+\bigl)$ 
is complete under the enveloping norm $ \|T\|_{\text{\rm r-P}}=\inf\{\|S\|:\pm T\le S\in\text{\rm P}\}$}
\end{assertion}

\noindent
together with Proposition \ref{p-Lim-closed} gives the following result

\begin{corollary}\label{r-d-Lim}{\em
For every $E$ and $F$, $\text{\rm span}\bigl(\text{\rm p-Lim}_+(E,F)\bigl)$ is a Banach space under the enveloping norm.}
\end{corollary}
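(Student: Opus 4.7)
The plan is to view this as a direct consequence of Assertion~\ref{enveloping norm} applied to $\text{\rm P}:=\text{\rm p-Lim}(E,F)$, so the work reduces to checking the hypothesis of that assertion, namely that $\text{\rm p-Lim}(E,F)$ is a norm-closed linear subspace of $\text{\rm L}(E,F)$.

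First I would verify linearity. From the defining condition ``$\|T^\ast f_n\|\to 0$ for every $\text{\rm w}^\ast$-null $(f_n)\subseteq F^\ast_+$'', the triangle inequality and homogeneity of the operator norm immediately yield that $\text{\rm p-Lim}(E,F)$ is closed under addition and scalar multiplication; this is precisely the fact already recorded in the sentence preceding Proposition~\ref{p-Lim-closed}. Second, norm closedness in $\text{\rm L}(E,F)$ is exactly the content of Proposition~\ref{p-Lim-closed}. Therefore $\text{\rm P}=\text{\rm p-Lim}(E,F)$ fits the hypothesis of Assertion~\ref{enveloping norm}.

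Invoking Assertion~\ref{enveloping norm} with this choice of $\text{\rm P}$ gives that
\[
   \text{\rm span}\bigl(\text{\rm P}_+\bigr)=\text{\rm span}\bigl(\text{\rm p-Lim}_+(E,F)\bigr)
\]
is complete under the enveloping norm $\|T\|_{\text{\rm r-P}}=\inf\{\|S\|:\pm T\le S\in\text{\rm p-Lim}(E,F)\}$, which is the desired conclusion.

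There is no real obstacle here beyond combining the two cited results; the only substantive point is the norm closedness, and that is already isolated as Proposition~\ref{p-Lim-closed}. In a full write-up I would simply state the two verifications explicitly and then invoke Assertion~\ref{enveloping norm} in a single concluding sentence.
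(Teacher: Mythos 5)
Your proposal is correct and follows exactly the route the paper intends: it combines the linearity of $\text{\rm p-Lim}(E,F)$ with its norm closedness (Proposition~\ref{p-Lim-closed}) and then applies Assertion~\ref{enveloping norm} to $\text{\rm P}=\text{\rm p-Lim}(E,F)$. The paper presents the corollary as precisely this combination, so there is nothing further to add.
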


\noindent
It was shown in \cite[Theorem~1]{EAS} that $\text{\rm K}(X,F)\subseteq\text{\rm aLwc}(X,F)$ for all $X$ 
iff $F$ has \text{\rm o}-continuous norm.
We extend this to almost limited operators. 

\begin{theorem}\label{Proposition 3.14}%no
$\text{\rm aLim}(X,F)\subseteq\text{\rm aLwc}(X,F)$ iff $F$ has \text{\rm o}-continuous norm.
\end{theorem}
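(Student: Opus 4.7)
The plan is to chain the implication of Theorem \ref{Proposition 3.6} with the trivial inclusion $\text{\rm Lwc}(X,F)\subseteq\text{\rm aLwc}(X,F)$ for the easy direction, and to produce a rank-one counterexample for the converse.

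For the sufficiency, I would first note that any $T\in\text{\rm Lwc}(X,F)$ automatically lies in $\text{\rm aLwc}(X,F)$: relatively weakly compact subsets of $X$ are norm bounded, so they are mapped by $T$ to Lwc subsets of $F$, which is precisely the defining property of aLwc. Assuming $F$ has o-continuous norm, Theorem \ref{Proposition 3.6} gives $\text{\rm aLim}(X,F)\subseteq\text{\rm Lwc}(X,F)$, and chaining with the observation above yields $\text{\rm aLim}(X,F)\subseteq\text{\rm aLwc}(X,F)$.

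For the converse, I would argue by contrapositive. If $F$ does not have o-continuous norm then $F^a\subsetneq F$, so one can pick $y\in F_+\setminus F^a$. Given a nontrivial Banach space $X$, choose a nonzero $\phi\in X^\ast$ via Hahn--Banach and consider the rank-one operator $T:X\to F$ defined by $Tx=\phi(x)y$. Being of finite rank, $T$ is compact. A short argument shows that every compact subset $K\subseteq F$ is limited: any $\text{\rm w}^\ast$-null $(f_n)\subseteq F^\ast$ is bounded by some $M$ via uniform boundedness, and if $\sup_{x\in K}|f_n(x)|\not\to 0$, then extracting $x_n\in K$ with $|f_n(x_n)|\geq\varepsilon$ and using compactness to obtain a subnet $x_{n_k}\to x\in K$ leads to the estimate $|f_{n_k}(x_{n_k})|\leq M\|x_{n_k}-x\|+|f_{n_k}(x)|\to 0$, a contradiction. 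Hence $T\in\text{\rm Lim}(X,F)\subseteq\text{\rm aLim}(X,F)$. On the other hand, $T(X)=\text{\rm span}(y)\not\subseteq F^a$, so by the equivalent form of the definition in Definition~\ref{E to Y}(d), $T\notin\text{\rm aLwc}(X,F)$, violating the assumed inclusion.

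The only step requiring actual work is the ``compact $\Rightarrow$ limited'' observation used to certify that the rank-one operator is almost limited; beyond this, the argument is a direct application of Theorem \ref{Proposition 3.6} and the equivalent formulations of the relevant definitions. No serious obstacle is expected.
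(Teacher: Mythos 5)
Your proof is correct, and both directions check out. For the direction ``the inclusion implies \text{\rm o}-continuity'' you use exactly the same device as the paper: a rank-one operator $Tx=\phi(x)y$ with $y\in F_+\setminus F^a$, which is compact, hence (almost) limited, but fails $T(X)\subseteq F^a$ and so cannot be \text{\rm aLwc}; your explicit verification that relatively compact sets are limited is a standard fact which the paper simply takes for granted (``$T$ is almost limited as it is compact''). The noteworthy difference is in the other direction. You obtain it by chaining Theorem~\ref{Proposition 3.6} (\text{\rm o}-continuity of the norm gives $\text{\rm aLim}(X,F)\subseteq\text{\rm Lwc}(X,F)$) with the trivial inclusion $\text{\rm Lwc}(X,F)\subseteq\text{\rm aLwc}(X,F)$, which is clean and complete. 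The paper's printed proof, by contrast, labels its counterexample paragraph ``sufficiency'' while in fact it assumes the inclusion together with the failure of \text{\rm o}-continuity --- that is, it re-proves necessity a second time (the first time by citing \cite[Theorem~1]{EAS} together with $\text{\rm K}(X,F)\subseteq\text{\rm aLim}(X,F)$) and never actually argues that \text{\rm o}-continuity of the norm in $F$ yields $\text{\rm aLim}(X,F)\subseteq\text{\rm aLwc}(X,F)$. Your route through Theorem~\ref{Proposition 3.6} supplies precisely that missing half, so your write-up is, if anything, more complete than the paper's own.
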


\begin{proof}
The necessity follows from \cite[Theorem~1]{EAS} as $\text{\rm K}(X,F)\subseteq\text{\rm aLim}(X,F)$.

For the sufficiency, suppose in contrary that $\text{\rm aLim}(X,F)\subseteq\text{\rm aLwc}(X,F)$ 
but the norm in $F$ is not \text{\rm o}-continuous. 
Then there exists $y_0 \in F\setminus F^a$. We choose $x_0 \in X$ and $h \in X^\ast$ such that 
$h(x_0) =\| x_0\|=1$ and define $Tx = h(x)y_0$.  The operator $T$ is almost limited as it is compact. 
However $T\not\in\text{\rm aLwc}(X,F)$ as $\{Tx_0\}=\{y_0\}$ is not an \text{\rm Lwc}-set.
A contradiction, as an almost Lwc operator must map $X$ into the order continuous part of $F$.
\end{proof}

\noindent
By \cite[Theorem~2]{EAS}, 
$\text{\rm K}(E,X)\subseteq \text{\rm aMwc}(E,X)$ iff $E^\ast$ is a \text{\rm KB}-space.
We extend this result as follows.

\begin{proposition}\label{Proposition 3.15}%no
If $E^\ast$ is a \text{\rm KB}-space then $\text{\rm aLim}_+(E,F)\subseteq \text{\rm aMwc}(E,F)$. 
\end{proposition}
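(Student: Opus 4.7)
The plan is to recognize that Proposition~\ref{Proposition 3.15} is just the tail end of the same chain of implications already employed in the proof of Theorem~\ref{Prop3}(ii). There, starting from $T\in\text{\rm Lim}_+(E,F)$, the very first move was to observe $T\in\text{\rm aLim}_+(E,F)$, and only then run the chain
$$
T\in\text{\rm aLim}_+(E,F)\ \Longrightarrow\ T^\ast\in\text{\rm aDP}_+(F^\ast,E^\ast)\ \Longrightarrow\ T^\ast\in\text{\rm aLwc}_+(F^\ast,E^\ast)\ \Longrightarrow\ T\in\text{\rm aMwc}(E,F).
$$
Since our hypothesis already delivers $T\in\text{\rm aLim}_+(E,F)$, we can enter the chain at its first link and reuse the rest verbatim.

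Concretely, the steps are as follows. First, invoke \cite[Theorem~1]{El} to pass from $T\in\text{\rm aLim}_+(E,F)$ to $T^\ast\in\text{\rm aDP}_+(F^\ast,E^\ast)$. Second, apply \cite[Proposition~2.4]{BLM18} to upgrade almost Dunford--Pettis of the positive adjoint $T^\ast$ to almost L-weakly compact; the side condition $T^\ast(F^\ast)\subseteq(E^\ast)^a$ baked into the definition of aLwc (cf.\ Definition~\ref{E to Y}\,d)) comes free of charge because $E^\ast$ being a KB-space forces order continuous norm on $E^\ast$, so that $(E^\ast)^a=E^\ast$. Third, apply the duality \cite[Theorem~2.5]{BLM18} to turn $T^\ast\in\text{\rm aLwc}_+(F^\ast,E^\ast)$ into $T\in\text{\rm aMwc}(E,F)$.

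Since every step is a direct citation of an established result, I do not expect any serious obstacle. The only points worth double-checking are (a) that \cite[Theorem~1]{El} really does transfer almost limitedness of $T$ (rather than only full limitedness) to almost Dunford--Pettis behaviour of the positive adjoint, and (b) that \cite[Proposition~2.4]{BLM18} accepts the setting where the codomain of $T^\ast$ is a KB-space. Both were implicitly at work in the proof of Theorem~\ref{Prop3}(ii), so no genuinely new subtlety should surface; if either citation requires additional hypotheses, one would instead verify the defining disjoint-testing condition for aMwc directly, using that every disjoint bounded sequence in $E$ is weakly null (Lemma~\ref{E' is o-cont}) together with the defining testing condition for $T(B_E)$ to be almost limited.
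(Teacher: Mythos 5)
Your proposal is correct and follows exactly the paper's own proof: the paper applies \cite[Theorem~1]{El} to get $T^\ast\in\text{\rm aDP}_+(F^\ast,E^\ast)$, then \cite[Proposition~2.4]{BLM18} to get $T^\ast\in\text{\rm aLwc}(F^\ast,E^\ast)$, then \cite[Theorem~2.5]{BLM18} to conclude $T\in\text{\rm aMwc}(E,F)$ --- the same chain you reuse from Theorem~\ref{Prop3}(ii).
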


\begin{proof}
Assume that $E^\ast$ is a \text{\rm KB}-space and $T\in\text{\rm aLim}_+(E,F)$. 
Then $T^\ast\in\text{\rm aDP}_+(F^\ast, E^\ast)$ \cite[Theorem~1]{El}. 
Consequently $T^\ast\in\text{\rm aLwc}(F^\ast, E^\ast)$ by \cite[Proposition~2.4]{BLM18}. 
Then $T\in\text{\rm aMwc}(E,F)$ by \cite[Theorem~2.5]{BLM18}.
\end{proof}

\begin{theorem}\label{Proposition 3.17}%no
Suppose $E^\ast$ and $F$ both have \text{\rm o}-continuous norms 
then $\text{\rm span}(\text{\rm aLim}_+(E,F))\subseteq\text{\rm Mwc}(E,F)$.
\end{theorem}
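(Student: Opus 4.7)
The plan is to reduce to the positive case, since $\text{\rm Mwc}(E,F)$ is a vector subspace. Fixing $T\in\text{\rm aLim}_+(E,F)$, I aim to show $\|Tx_n\|\to 0$ for every disjoint bounded sequence $(x_n)$ in $E$. Applying Theorem~\ref{Proposition 3.6} with $F$ order continuous gives $T\in\text{\rm Lwc}(E,F)$, and then \cite[Theorem~5.71]{AlBu} makes $T$ semi-compact. Given such $(x_n)$, positivity of $T$ and $|Tx_n|\leq T|x_n|$ let me replace $(x_n)$ by $(|x_n|)$ and assume $0\leq x_n$ and $\|x_n\|\leq 1$. Since $E^\ast$ has o-continuous norm, Lemma~\ref{E' is o-cont} yields $x_n\stackrel{w}{\to}0$, and hence $Tx_n\stackrel{w}{\to}0$ in $F$.

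Fix $\varepsilon>0$ and, by semi-compactness, pick $u_\varepsilon\in F_+$ with $T(B_E)\subseteq[-u_\varepsilon,u_\varepsilon]+\varepsilon B_F$. I then set $y_n:=Tx_n\wedge u_\varepsilon\in[0,u_\varepsilon]$ and $z_n:=(Tx_n-u_\varepsilon)^+\geq 0$, so that $Tx_n=y_n+z_n$; writing $Tx_n=v_n+w_n$ with $|v_n|\leq u_\varepsilon$ and $\|w_n\|\leq\varepsilon$ gives $(Tx_n-u_\varepsilon)^+\leq w_n^+$, so $\|z_n\|\leq\varepsilon$. Since $F$ has o-continuous norm the order interval $[0,u_\varepsilon]$ is weakly compact, and because $0\leq y_n\leq Tx_n\stackrel{w}{\to}0$ the weak closedness of the positive cone forces every weakly convergent subsequence of $(y_n)$ to have limit $0$; hence $y_n\stackrel{w}{\to}0$ in $F$.

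The remaining and principal step is to deduce $\|y_n\|\to 0$. This rests on the lemma that, in a Banach lattice with o-continuous norm, every nonnegative order bounded weakly null sequence is norm null, which I would prove by contradiction: if $\|y_n\|\not\to 0$, a standard disjointification in the Dedekind $\sigma$-complete lattice $F$ (successive band projections subtracting the contributions already captured) extracts a disjoint $(v_k)\subseteq[0,u_\varepsilon]$ with $\inf\|v_k\|>0$; but o-continuity of $u_\varepsilon$ forces every disjoint sequence in $[0,u_\varepsilon]$ to be norm null, a contradiction. Combined with $\|z_n\|\leq\varepsilon$, this yields $\limsup\|Tx_n\|\leq\varepsilon$; since $\varepsilon>0$ was arbitrary, $\|Tx_n\|\to 0$ and $T\in\text{\rm Mwc}(E,F)$.
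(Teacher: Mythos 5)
Your argument reaches the right conclusion but by a genuinely different, and considerably longer, route than the paper. After the same reduction to $T\ge 0$ and the same use of Lemma~\ref{E' is o-cont} to get $x_n\stackrel{w}{\to}0$, the paper notes $|Tx_n|\le T|x_n|\stackrel{w}{\to}0$ and then applies the norm-null criterion of \cite[Corollary~2.3.5]{Mey91} directly: $\|Tx_n\|\to 0$ once $f_n(Tx_n)\to 0$ for every disjoint bounded $(f_n)$ in $F^\ast$; since $F$ has \text{\rm o}-continuous norm such $(f_n)$ are \text{\rm w}$^\ast$-null, so almost limitedness gives $\|T^\ast f_n\|\to 0$ and $|f_n(Tx_n)|\le\|T^\ast f_n\|\,\|x_n\|\to 0$. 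You instead route through Theorem~\ref{Proposition 3.6} ($\text{\rm aLim}\subseteq\text{\rm Lwc}$), then $\text{\rm Lwc}\subseteq\text{\rm semi-K}$, and finish with the truncation $Tx_n=Tx_n\wedge u_\varepsilon+(Tx_n-u_\varepsilon)^+$; the decomposition, the estimate $\|(Tx_n-u_\varepsilon)^+\|\le\varepsilon$, and the squeeze giving $y_n\stackrel{w}{\to}0$ are all correct. In effect you prove the more general statement that every positive semi-compact operator between such $E$ and $F$ is \text{\rm Mwc}, which is more than the theorem asks but costs you an extra reduction.

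The one soft spot is your key lemma (``positive, order bounded, weakly null $\Rightarrow$ norm null when the norm of $F$ is \text{\rm o}-continuous''). The lemma is true, but the sketched proof is not convincing as stated: extracting from $0\le y_n\le u_\varepsilon$ with $\|y_n\|\ge\delta$ a \emph{genuinely disjoint} sequence in $[0,u_\varepsilon]$ with norms bounded below is not a routine disjointification --- the classical constructions (e.g.\ \cite[Lemma~4.35]{AlBu}) control the discarded parts only against a fixed functional, not in norm, so it is unclear that $\inf_k\|v_k\|>0$ survives. The clean proof of your lemma is dual: by \text{\rm o}-continuity $[-u_\varepsilon,u_\varepsilon]$ is an \text{\rm Lwc} set, so by \cite[Theorem~5.63]{AlBu} every disjoint bounded $(f_n)$ in $F^\ast_+$ satisfies $f_n(u_\varepsilon)\to 0$, whence $0\le f_n(y_n)\le f_n(u_\varepsilon)\to 0$ and \cite[Corollary~2.3.5]{Mey91} gives $\|y_n\|\to 0$. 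But that is exactly the tool the paper applies to $Tx_n$ itself, which shows the detour through semi-compactness can be bypassed. Either patch the lemma this way (or cite it), and your proof stands.
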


\begin{proof}
Let $T\in \text{\rm span}(\text{\rm aLim}_+(E,F))$. Suppose $T\geq 0$.  Let $(x_n)$ be a disjoint sequence in $B_E$, 
we show $\|Tx_n \|\to 0$. Since $E^\ast$ is a \text{\rm KB}-space, $x_n\stackrel{w}{\to}0$ in $E$. 
Since $(x_n)$ is disjoint, $|x_n|\stackrel{w}{\to}0$~\cite[Proposition~1.3]{Wnuk2013}, and hence 
$T|x_n |\stackrel{w}{\to}0$ in $F$. As $T$ is positive, $|Tx_n | \leq  T |x_n |$ 
and $|Tx_n |\stackrel{w}{\to}0$ in $F$. 
It remains to show $f_n (Tx_n )\to 0$ for each disjoint bounded sequence $(f_n)$ in $F^\ast$. 
This follows  from $|f_n(Tx_n)|\leq \|T^\ast f_n\| \|x_n \|$, since $\|T^\ast f_n\|\to 0$ as $T$
is almost limited.
\end{proof}

\noindent
We continue with a necessary conditions for positive weakly compact operators to be almost limited~Theorem~\ref{Proposition 2.8}.

\begin{proposition}\label{Proposition 3.16}%no 
Suppose $E=E^a$ and $\text{\rm W}(X,E)\subseteq\text{\rm aLim}(X,E)$. 
Then  $E$ is a \text{\rm KB}-space or $X\in\text{\rm(DPP)}$.
\end{proposition}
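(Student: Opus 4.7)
The plan is to establish the contrapositive: assuming $E$ is not a \text{\rm KB}-space and $X\notin\text{\rm(DPP)}$, I construct $T\in\text{\rm W}(X,E)\setminus\text{\rm aLim}(X,E)$. The strategy is dual to the proof of Theorem~\ref{Proposition 2.8}: there one factors $E\to\ell^{1}\to Y$, whereas here we factor $X\to c_{0}\to E$.

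Since $E=E^a$ is order continuous but not \text{\rm KB}, the classical Meyer--Nieberg theorem \cite{Mey91} yields a disjoint positive sequence $(u_n)$ in $E$ forming a $c_{0}$-sublattice; after normalizing we may assume $\|u_n\|=1$ and $M:=\sup_{N}\|\sum_{k=1}^{N}u_k\|<\infty$. Since $X\notin\text{\rm(DPP)}$, after passing to subsequences we obtain $(x_n)\subseteq B_X$ with $x_n\stackrel{w}{\to}0$ and $(f_n)\subseteq B_{X^\ast}$ with $f_n\stackrel{w}{\to}0$ satisfying $|f_n(x_n)|\ge\varepsilon$ for all $n$ and some fixed $\varepsilon>0$. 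Define $R:X\to c_{0}$ by $R(x)=(f_n(x))_n$; this is well defined with $\|R\|\le 1$, and is weakly compact because $R^\ast$ sends $B_{\ell^1}$ into the closed absolutely convex hull of $\{f_n\}\cup\{0\}$, which is weakly compact by Krein's theorem, so Gantmacher applies. Define $S:c_{0}\to E$ by $S(\alpha)=\sum_n\alpha_n u_n$; the $c_{0}$-sublattice property makes $S$ bounded with $\|S\|\le M$. Hence $T:=SR\in\text{\rm W}(X,E)$.

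To show $T\notin\text{\rm aLim}(X,E)$, I construct a disjoint $\text{w}^\ast$-null sequence $(\varphi_k)\subseteq E^\ast$ that is not uniformly null on $T(B_X)$. Since $E=E^a$ is Dedekind complete, the band projection $P_k:E\to E$ onto the principal band $B_k$ generated by $u_k$ exists. Pick $\phi_k\in(B_{E^\ast})_+$ with $\phi_k(u_k)=1$ (Hahn--Banach), and set $\varphi_k:=P_k^\ast\phi_k$. Then $\|\varphi_k\|\le 1$ and $\varphi_k(u_j)=\phi_k(P_k u_j)=\delta_{jk}$. Each $\varphi_k$ lies in the image band $P_k^\ast(E^\ast)$; since the bands $B_k$ are pairwise disjoint in $E$, the corresponding image bands in $E^\ast$ are pairwise disjoint, so $(\varphi_k)$ is disjoint in $E^\ast$. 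Since $E=E^a\in(\text{\rm DDw}^\ast\text{\rm P})$ (as used in the proof of Proposition~\ref{Proposition 2.17}), the disjoint bounded sequence $(\varphi_k)$ is $\text{w}^\ast$-null. A direct computation gives
$$
\varphi_k(Tx_k)=\sum_n f_n(x_k)\,\varphi_k(u_n)=f_k(x_k),
$$
so $|\varphi_k(Tx_k)|\ge\varepsilon$ for every $k$, contradicting $T\in\text{\rm aLim}(X,E)$.

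The main obstacle is the simultaneous requirement that $(\varphi_k)$ be biorthogonal to $(u_k)$, pairwise disjoint in $E^\ast$, \emph{and} $\text{w}^\ast$-null. The assumption $E=E^a$ supplies all three: Dedekind completeness produces the band projections and hence both biorthogonality and disjointness of $(\varphi_k)$ in $E^\ast$, and the $(\text{\rm DDw}^\ast\text{\rm P})$ property of order continuous Banach lattices promotes disjoint boundedness in $E^\ast$ to $\text{w}^\ast$-nullity.
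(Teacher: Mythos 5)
Your proof is correct and follows essentially the same route as the paper's: both factor a weakly compact operator $T=SR$ through $c_0$, using a lattice copy of $c_0$ in the non-KB space $E$ for $S$ and the weakly null sequence in $X^\ast$ for $R$, and then test $T$ against a bounded disjoint (hence, by order continuity of the norm of $E$, $\text{\rm w}^\ast$-null) sequence in $E^\ast$ biorthogonal to the $c_0$-basis. The only cosmetic differences are that the paper argues directly (showing $x_n^\ast(x_n)\to 0$ from $\|T^\ast f_n\|\to 0$) rather than by contraposition, and obtains the biorthogonal disjoint functionals from the cited description of the $c_0$-copy rather than via band projections.
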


\begin{proof}
Suppose $E$ is not a  \text{\rm KB}-space. We show $X\in\text{\rm(DPP)}$. 
Since $E$ is not a  \text{\rm KB}-space, it contains a lattice copy (say $F$) of $c_0$. 
Then $F$ is a closed linear span of some disjoint sequence $(y_n)$ in $E_+$ such 
that $d = \inf_n\| y_n\|>0$ and $\sup_m \|\sum_{i=1}^m y_i\| <\infty$ by \cite[Proposition~0.5.1]{Wnuk90}. 
The operator 
$$
 S\beta = \sum_{k=1}^\infty \beta_k y_k \quad\quad (\beta=(\beta_k)_k\in c_0)
$$ 
is a lattice homomorphism from $c_0$ onto $F$. 
There exists a positive disjoint $(f_n)$ in  $B_{E^\ast}$ such that  $f_n(y_n) \geq d$ 
for all $n$ and $f_n(y_m) = 0$ if $n \ne m$. 
Take $B_X\ni x_n\stackrel{w}{\to}0$ and $B_{X^\ast} \ni x^\ast_n\stackrel{w}{\to} 0$. 
Define $R : X\to  c_0$ by 
$$
  Rx = (x^\ast_k(x))_k.
$$ 
Then $R\in\text{\rm W}(X,c_0)$ by \cite[Theorem~5.26]{AlBu}. 
Therefore $T = SR\in\text{\rm W}(X,E)$ and hence $T\in\text{\rm aLim}(X,E)$ by the hypothesis. 
Since $E$ has \text{\rm o}-continuous norm, the bounded disjoint sequence $(f_n)$ is \text{\rm w}$^\ast$-null 
and therefore $\|T^\ast f_n\|\to 0$ as $T\in\text{\rm aLim}(X,E)$. 
Since $(x_n)$ is bounded and $|f_n(Tx_n)| \leq\| x_n\| \cdot\| T^\ast f_n\|$, these yield $|f_n(Tx_n)|\to 0$. 
Thus, it follows from
$$
 | f_n(Tx_n)| = 
|f_n(S(R(x_n)))|= \left| f_n\left(\sum_{k=1}^\infty [R(x_n)]_k y_k\right)\right|=
$$
$$
|f_n ([R(x_n)]_n y_n)|=|f_n ([(x^\ast_k(x_n))_k]_n y_n)|=
|f_n (x^\ast_n(x_n) y_n)|=
$$
$$
|x^\ast_n(x_n) f_n (y_n)| =
|x^\ast_n(x_n)|\cdot | f_n (y_n)|\geq |x^\ast_n(x_n)|\cdot d
$$
that $x^\ast_n(x_n)\to 0$, and hence $X\in\text{\rm(DPP)}$.
\end{proof}

\begin{corollary}\label{Corollary 3.17.1}
$\big[\text{\rm W}(X,c_0)\subseteq\text{\rm aLim}(X,c_0)\big]\Longrightarrow X\in\text{\rm (DPP)}$.
\end{corollary}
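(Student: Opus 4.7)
The plan is to obtain this corollary as an immediate specialization of Proposition~\ref{Proposition 3.16} to the target lattice $E = c_0$. So the only work is to verify the two hypotheses of that proposition and rule out one of its two conclusions.

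First I would note that $c_0$ has order continuous norm, so $c_0 = c_0^a$; this is the hypothesis $E = E^a$ of Proposition~\ref{Proposition 3.16}. The other hypothesis, $\text{\rm W}(X,c_0) \subseteq \text{\rm aLim}(X,c_0)$, is exactly what is assumed in the corollary. Thus Proposition~\ref{Proposition 3.16} applies and yields that either $c_0$ is a \text{\rm KB}-space or $X \in \text{\rm (DPP)}$.

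Next I would eliminate the first alternative. A \text{\rm KB}-space cannot contain a lattice copy of $c_0$ (equivalently, the standard unit vector basis of $c_0$ is disjoint, bounded, positive, and not norm convergent, so by Lemma~\ref{E' is o-cont} applied to the bidual characterization $c_0$ fails to be a \text{\rm KB}-space). Since $c_0$ trivially contains itself as a lattice copy, $c_0$ is not a \text{\rm KB}-space. Therefore the only remaining alternative is $X \in \text{\rm (DPP)}$, which is the desired conclusion.

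There is essentially no obstacle here: the corollary is a one-line consequence of Proposition~\ref{Proposition 3.16}, and the only point that requires a brief justification is the well-known fact that $c_0$ is not a \text{\rm KB}-space. So the write-up will be short, just recording the two verifications and invoking the previous proposition.
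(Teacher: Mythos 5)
Your proposal is correct and is exactly the intended derivation: the paper states this as an immediate corollary of Proposition~\ref{Proposition 3.16}, obtained by taking $E=c_0$ (which has order continuous norm) and discarding the first alternative because $c_0$ is not a \text{\rm KB}-space. The only blemish is your parenthetical justification of that last fact via Lemma~\ref{E' is o-cont}, which concerns order continuity of the dual norm rather than the \text{\rm KB}-property of $c_0$ itself; the cleaner reason is simply that the partial sums $\sum_{k=1}^{n}e_k$ form an increasing norm-bounded sequence in $(c_0)_+$ with no norm limit.
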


%%%%%%%%%%%%%%%%%%%%
\section{Collectively limited and almost limited operator families}
%%%%%%%%%%%%%%%%%%%%

Collectively qualified families of operators were studied recently in \cite{Em24coll_order,Em24coll,Em24coLevi,AEG_collect1}. Some of the results given 
previously in the present paper for individual operators have collective versions as we show below.
At the expense of few more definitions~\cite{Em24coll,Em24coll_order,AEG_collect1}, we can give more general results extending results 
obtained in previous sections.

\begin{definition}\label{collect}%OK
{\em A norm bounded family ${\cal T}$ of operators is
\begin{enumerate}[a)]
\item
{\em collectively compact} (or ${\cal T}\in\text{\bf K}(X,Y)$) if ${\cal T}B_X=\bigcup\limits_{T\in{\cal T}}T(B_X)$ is relatively compact in $Y$.
\item
{\em collectively limited} (or ${\cal T}\in\text{\bf Lim}(X,Y)$) if ${\cal T}B_X$ is limited in $Y$.
\item
{\em collectively weakly compact} (or ${\cal T}\in\text{\bf W}(X,Y)$) if ${\cal T}B_X$ is relatively weakly compact in $Y$.
\item
{\em collectively} Lwc (or ${\cal T}\in\text{\bf Lwc}(X,F)$) if ${\cal T}B_X$ is an \text{\rm Lwc} set in $F$.
\item
{\em collectively} Mwc (or ${\cal T}\in\text{\bf Mwc}(E,Y)$) if 
$\sup\limits_{T\in{\cal T}}\|Tx_n\|\to 0$ for every disjoint $(x_n)$ in $B_E$.
\item
{\em collectively semi-compact} (or ${\cal T}\in\text{\bf semi-K}(X,F)$) if,
for each $\varepsilon >0$ there exists $u\in F_+$ such that ${\cal T}B_X\subseteq [-u,u]+\varepsilon B_F$.
\item
{\em collectively order bounded} (or ${\cal T}\in\text{\bf L}_{ob}(E,F)$) if, for each $a\in E_+$, there exists $b\in F_+$ with  
$\bigcup\limits_{T\in{\cal T}}T[-a,a]\subseteq [-b,b]$.
\item
{\em collectively almost Dunford--Pettis} (or ${\cal T}\in \text{\bf aDP}(E,F)$) if, for each disjoint $x_n \stackrel{w}{\to}0$ on $E$,  
$\sup\limits_{T\in{\cal T}} \|Tx_n\|\to 0$.
\end{enumerate}}
\end{definition}

\noindent
We give an example of collectively limited set consisting of compact operators that is not 
collectively compact.

\begin{example}
{\em 
Let $E=\ell^2$ and $F=\ell^\infty$. Define operators $T_n:E\to F$ by
$T_n(x)=x_n\cdot {\bf e}_n$. Then ${\cal T}=\{T_n: n=1, \dots, \infty\}\subseteq \text{\rm K}(\ell^2,\ell^\infty)$.
The family ${\cal T}$ is not collectively compact since $\bigcup_{n=1}^\infty T_n(B_{\ell^2})$ contains the sequence ${\bf e}_n$
that is not a convergent sequence in $\ell^\infty$.
However, ${\cal T}$ is collectively limited by the Phillips lemma since
$\bigcup_{n=1}^\infty T_n(B_{\ell^2})\subseteq B_{c_0}$.
}
\end{example}

\noindent
Let ${\cal T}$ be a norm bounded family of operators. The following proposition 
gives a method of producing collectively qualified families. We omit its straightforward proof.

\begin{proposition}
Let ${\cal T}\subseteq\text{\rm L}(X,F)$ be norm bounded and $S\in\text{\rm L}(F,G)$. The following is true.
\begin{enumerate}[\em (i)]
\item 
If $S\in\text{\rm K}(F,G)$ then ${\cal T}\circ S\in\text{\bf K}(E,G)$.
\item 
If $S\in\text{\rm W}(F,G)$ then ${\cal T}\circ S \in \text{\bf W}(E,G)$.
\item 
If $S\in\text{\rm Lim}(F,G)$ then ${\cal T}\circ S \in \text{\bf Lim}(E,G)$.
\item 
If $S\in\text{\rm aLim}(F,G)$ then ${\cal T}\circ S \in \text{\bf aLim}(E,G)$.
\item 
If $S\in\text{\rm Lwc}(F,G)$ then ${\cal T}\circ S \in \text{\bf Lwc}(E,G)$.
\item 
If ${\cal T} \in \text{\bf semi-K}(E,F)$ and $S\in\text{\rm Lwc}(F,G)$ then ${\cal T}\circ S \in \text{\bf K}(E,G)$.
\item 
If ${\cal T} \in \text{\bf semi-K}(E,F)$ and $S\in\text{\rm owc}(F,G)$ then ${\cal T}\circ S \in \text{\bf W}(E,G)$.
\end{enumerate}
\end{proposition}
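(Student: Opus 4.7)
The plan is to observe that all seven items follow from two uniform patterns applied to $\bigcup_{T\in\mathcal{T}}(S\circ T)(B_X)$: a scaling-and-subset argument for (i)--(v), and an $\varepsilon$-approximation via Lemma~\ref{Lemma 3} for (vi)--(vii). Throughout I read $\mathcal{T}\circ S$ as the family $\{S\circ T:T\in\mathcal{T}\}$ acting from the original domain of $\mathcal{T}$ into $G$.

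For items (i)--(v), I would first set $M:=\sup_{T\in\mathcal{T}}\|T\|<\infty$, which is finite by the norm-boundedness hypothesis on $\mathcal{T}$. Then
$$
\bigcup_{T\in\mathcal{T}}(S\circ T)(B_X)\ \subseteq\ S(M\cdot B_F)\ =\ M\cdot S(B_F).
$$
The hypothesis on $S$ in each case is precisely that $S(B_F)$ is relatively compact, relatively weakly compact, limited, almost limited, or L-weakly compact, respectively. Each of these five properties is manifestly stable under multiplication by a positive scalar and inherited by subsets: for ``limited'' and ``almost limited'' this reduces to the observation that a (disjoint) $\mathrm{w}^\ast$-null sequence which is uniformly null on a set is uniformly null on any subset, while for Lwc one uses that the solid hull of a subset sits inside the solid hull of the ambient set. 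Combining these two closure properties delivers the corresponding collective property for $\mathcal{T}\circ S$ and finishes (i)--(v).

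For (vi) and (vii), the collective semi-compactness of $\mathcal{T}$ furnishes, for each $\varepsilon>0$, some $u_\varepsilon\in F_+$ with $\mathcal{T}(B_X)\subseteq[-u_\varepsilon,u_\varepsilon]+\varepsilon B_F$. Applying $S$ yields
$$
\bigcup_{T\in\mathcal{T}}(S\circ T)(B_X)\ \subseteq\ S[-u_\varepsilon,u_\varepsilon]\ +\ \varepsilon\,\|S\|\,B_G.
$$
In (vii), the \emph{owc} hypothesis makes $S[-u_\varepsilon,u_\varepsilon]$ relatively weakly compact, so Lemma~\ref{Lemma 3}(ii) (after an obvious rescaling of $\varepsilon$) gives collective weak compactness. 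In (vi), one must show $S[-u_\varepsilon,u_\varepsilon]$ is relatively compact; here I would invoke that every L-weakly compact operator is semi-compact (\cite[Theorem~5.71]{AlBu}, as quoted in the discussion following Corollary~\ref{Corollary 2.2.5}), so that $S[-u_\varepsilon,u_\varepsilon]$ is simultaneously Lwc and almost order bounded, and then apply Lemma~\ref{Lemma 3}(i).

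The main obstacle will be precisely the compactness of $S[-u_\varepsilon,u_\varepsilon]$ in part (vi): the scalar-and-subset pattern that settled (i)--(v) only delivers an Lwc set in $G$ here, and upgrading to relative compactness requires combining the Lwc and semi-compact characterisations of $S$ with a further order-interval approximation before Lemma~\ref{Lemma 3}(i) bites. Everything else in the proposition is cosmetic once the two templates above are in place.
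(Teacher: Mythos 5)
The paper itself gives no argument here (the proof is declared straightforward and omitted), so your proposal has to stand on its own. Your two templates are surely the intended ones, and they do settle six of the seven items: for (i)--(v), the inclusion $\bigcup_{T\in{\cal T}}(S\circ T)(B_X)\subseteq M\cdot S(B_F)$ with $M=\sup_{T\in{\cal T}}\|T\|$ reduces everything to the (correct) facts that subsets and positive scalar multiples of relatively compact, relatively weakly compact, limited, almost limited, and \text{\rm Lwc} sets retain the respective property; and in (vii) the set $S[-u_\varepsilon,u_\varepsilon]=-Su_\varepsilon+S[0,2u_\varepsilon]$ is relatively weakly compact by the owc hypothesis, so Lemma~\ref{Lemma 3}(ii) applies.

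Item (vi) is where your argument genuinely breaks, and the fix you sketch cannot be made to work. The claim you need --- that a set which is both \text{\rm L}-weakly compact and almost order bounded is relatively compact --- is false: the order interval $[0,\mathbb{1}]$ in $L^1[0,1]$ is order bounded and \text{\rm Lwc} (any disjoint sequence in $[-\mathbb{1},\mathbb{1}]$ has summable $L^1$-norms), yet it contains the shifted Rademacher functions $\frac{1}{2}(\mathbb{1}+r_n)$, whose pairwise $L^1$-distances all equal $\frac{1}{2}$, so it is not relatively compact. Worse, item (vi) itself appears to be false as stated: take $E=F=L^\infty[0,1]$, $G=L^1[0,1]$, ${\cal T}=\{\text{\rm Id}_{L^\infty}\}$, which is collectively semi-compact since $B_{L^\infty}=[-\mathbb{1},\mathbb{1}]$, and let $S$ be the canonical inclusion of $L^\infty[0,1]$ into $L^1[0,1]$. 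Then $S$ is \text{\rm L}-weakly compact, because it maps bounded sets into multiples of the \text{\rm Lwc} set $[-\mathbb{1},\mathbb{1}]\subseteq L^1[0,1]$, but $S\circ\text{\rm Id}_{L^\infty}=S$ is not compact, again by the Rademachers. So no completion of your Lemma~\ref{Lemma 3}(i) template is possible without strengthening the hypotheses; the correct pattern in this direction is the one the paper proves individually in Proposition~\ref{Proposition 2.14}, where the second factor is Dunford--Pettis and the intermediate lattice has \text{\rm o}-continuous norm, and any repair of (vi) should follow that line rather than the bare \text{\rm Lwc} assumption on $S$.
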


%For example, let $E=F=\ell^2$ and $\{e_k\}$ be the usual orthonormal basis of $\ell^2$. 
%Then ${\cal T}=\{T_n: T_n(x)=\langle x,e_n \rangle e_1\}$ is 
%a collectively compact family.
 
\medskip
\noindent
The following result is a consequence of Lemma~\ref{Lemma 3}.

\begin{proposition}\label{col_26.08} %OK
For every $E$ the following hold$:$
\begin{enumerate}[\em (i)]
\item
If $F=F^a$ then $\text{\bf semi-K}(E,F)\subseteq\text{\bf W}(E,F)$.
\item
If $F=F^a$ then $\text{\bf semi-K}(E,F)\subseteq\text{\bf Lwc}(E,F)$.
\item
If $F$ is discrete and $F=F^a$ then $\text{\bf semi-K}(E,F)\subseteq\text{\bf K}(E,F)$.
\item
If $F$ has limited order intervals, then $\text{\bf semi-K}(E,F)\subseteq\text{\bf Lim}(E,F)$.
\end{enumerate}
\end{proposition}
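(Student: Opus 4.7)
The plan is to view all four items as direct collective analogues of Proposition~\ref{Proposition 2.1}, reducing each to an application of Lemma~\ref{Lemma 3} once the appropriate property of order intervals in $F$ has been established. The crucial observation is that by Definition~\ref{collect}(f), for every $\varepsilon>0$ there exists $u_\varepsilon\in F_+$ such that
$$
{\cal T}B_X \subseteq [-u_\varepsilon,u_\varepsilon] + \varepsilon B_F,
$$
so the single set ${\cal T}B_X$ plays the role of $T(B_X)$ in the individual-operator case, and collective qualification of ${\cal T}$ is, by definition, a property of this one set.

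For item (iv), the assumption that $F$ has limited order intervals makes each $[-u_\varepsilon,u_\varepsilon]$ a limited subset of $F$. Lemma~\ref{Lemma 3}(iii) then shows ${\cal T}B_X$ is limited, i.e.\ ${\cal T}\in\text{\bf Lim}(E,F)$. For item (i), if $F=F^a$ then order intervals are weakly compact (since the norm is order continuous), so Lemma~\ref{Lemma 3}(ii) yields that ${\cal T}B_X$ is relatively weakly compact. For item (iii), discreteness together with $F=F^a$ forces order intervals to be (norm) compact, whence Lemma~\ref{Lemma 3}(i) gives that ${\cal T}B_X$ is relatively compact.

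Item (ii) is the only one where Lemma~\ref{Lemma 3} does not directly apply, since that lemma does not have a stated clause for \text{\rm Lwc} sets. However, in a Banach lattice $F$ with order continuous norm, an almost order bounded set is automatically \text{\rm Lwc}: if $(y_n)$ is disjoint in the solid hull of ${\cal T}B_X$, one uses the inclusion ${\cal T}B_X\subseteq [-u_\varepsilon,u_\varepsilon]+\varepsilon B_F$ to split $y_n=v_n+w_n$ with $v_n\in[-u_\varepsilon,u_\varepsilon]$ and $\|w_n\|\le\varepsilon$ (passing to disjoint components in the solid hull, cf.\ the Riesz decomposition argument), and o-continuity of the norm forces $\|v_n\|\to 0$, so $\limsup\|y_n\|\le\varepsilon$. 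Since $\varepsilon>0$ is arbitrary, $\|y_n\|\to 0$.

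The only real work is the verification for item (ii); the other three items are one-line deductions from Lemma~\ref{Lemma 3} together with the standard facts that in an order continuous Banach lattice order intervals are weakly compact, and that in a discrete order continuous Banach lattice order intervals are norm compact. I expect no genuine obstacle beyond carefully recording the reduction to ${\cal T}B_X$.
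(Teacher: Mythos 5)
Your proposal is correct and follows essentially the same route as the paper: items (i), (iii), and (iv) are handled by exactly the same reductions to Lemma~\ref{Lemma 3} via weakly compact, norm compact, and limited order intervals respectively, and for item (ii) the paper simply cites the same well-known fact (almost order bounded sets are Lwc under order continuity) whose standard proof you sketch. No substantive difference.
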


\begin{proof}
(i)\ 
If ${\cal T}\in\text{\bf semi-K}(X,F)$ then, for every $\varepsilon>0$ there exists
$u_\varepsilon\in E_+$ such that $\bigcup\limits_{T\in{\cal T}}T(B_E)\subseteq [-u_\varepsilon,u_\varepsilon]+\varepsilon B_F$.
Since  $F$ has order continuous norm, $[-u,u]$ is weakly compact~\cite[Theorem~4.9]{AlBu}.
The rest follows from Lemma~\ref{Lemma 3}\,(ii).

\medskip
(ii) \
The statement follows from the well known fact that in a Banach lattice with order continuous norm,
every almost order bounded set is L-weakly compact.   

\medskip
(iii) \ 
By the assumption, each interval $[-u,u]$ is norm compact.
The rest follows from Lemma~\ref{Lemma 3}\,(i).

\medskip
(vi) \ 
It follows from Lemma~\ref{Lemma 3}\,(iii).
\end{proof}

\noindent
The following theorem is an important tool in lattice approximation for collectively Mwc families of operators.

\begin{theorem}\label{c-5.60}
Let ${\cal T}\in\text{\bf Mwc}(E,F)$ be norm bounded. For each $\varepsilon>0$, there exists $u\in E_+$ such that
$\sup\limits_{T\in{\cal T}} \|T\big[(|x|-u)^+\big]\|\le\varepsilon$ holds for all $x\in B_E$.
\end{theorem}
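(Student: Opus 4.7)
The plan is to adapt the classical Aliprantis--Burkinshaw argument (\cite[Theorem~5.60]{AlBu}) to the collective setting, tracking the operator chosen at each step. Suppose by contradiction that for some $\varepsilon_0>0$, for every $u\in E_+$ there exist $T\in{\cal T}$ and $x\in B_E$ with $\|T[(|x|-u)^+]\|>\varepsilon_0$. Iterating this negation produces an increasing sequence $(u_n)\subset E_+$, operators $(T_n)\subset{\cal T}$, vectors $(x_n)\subset B_E$, and auxiliary positive elements $y_n:=(|x_n|-u_{n-1})^+\in(B_E)_+$ satisfying $\|T_ny_n\|>\varepsilon_0$, where $u_n$ is built from the preceding $y_i$'s.

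Next, I would extract from $(y_n)$ a genuinely disjoint bounded sequence $(\tilde y_k)\subset(B_E)_+$ with $0\le\tilde y_k\le y_{n_k}$ and $\|T_{n_k}\tilde y_k\|\ge\varepsilon_0/2$ for all $k$. The collective M-weak compactness of ${\cal T}$, via Definition~\ref{collect}\,e), then forces $\sup_{T\in{\cal T}}\|T\tilde y_k\|\to 0$, and in particular $\|T_{n_k}\tilde y_k\|\to 0$, contradicting the lower bound and completing the argument.

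The main (and really the only) obstacle is the disjointification step, since in a general Banach lattice $(|x|-u)^+\wedge u$ need not vanish and so the naive choice $u_n=y_1+\cdots+y_n$ does not automatically produce pairwise-disjoint $(y_n)$. The remedy, borrowed directly from the proof of the single-operator Aliprantis--Burkinshaw theorem, is a Kadec--Pelczynski-style extraction (equivalently, a band-projection argument carried out in the Dedekind completion of $E$), which given a bounded sequence $(y_n)\subset(B_E)_+$ with $\|T_ny_n\|>\varepsilon_0$ yields the required disjoint modification $(\tilde y_k)$ while preserving most of the norm of $T_{n_k}y_{n_k}$. Carrying the operator index $T_{n_k}$ through this extraction is the only adjustment needed to pass from the single-operator statement to the collective one; no new lattice-theoretic ingredient is required.
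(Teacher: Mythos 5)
Your plan is correct, but it amounts to re-deriving by hand what the paper obtains by a one-line citation. The paper's proof sets $\rho(x):=\sup_{T\in{\cal T}}\|Tx\|$, observes that $\rho$ is a norm continuous seminorm (here the norm boundedness of ${\cal T}$ is used) which tends to zero along disjoint sequences of $B_E$ (this is exactly Definition~\ref{collect}\,e)), and then invokes the abstract lattice approximation theorem \cite[Theorem~4.36]{AlBu} with $A=B_E$; the conclusion is literally that theorem's statement for this $\rho$. Your contradiction-plus-disjointification scheme is precisely the proof of \cite[Theorem~4.36]{AlBu} with the operator index carried along: the ``extraction'' you defer to is not really a Kadec--Pelczynski argument but the elementary lattice construction $w_n=\bigl(|x_{n+1}|-4^n\sum_{i=1}^n|x_i|-2^{-n}x\bigr)^+$ with $x=\sum_n 2^{-n}|x_n|$ (disjointness via \cite[Lemma~4.35]{AlBu}), and the estimate $0\le y_n-w_n\le 2^{-n}x$ combined with $\sup_{T\in{\cal T}}\|T\|<\infty$ yields your lower bound $\|T_{n_k}\tilde y_k\|\ge\varepsilon_0/2$, after which Definition~\ref{collect}\,e) gives the contradiction. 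So both proofs rest on the same disjointification lemma; yours is self-contained modulo that lemma but longer, while the paper's is a two-line reduction whose only insight is that the collective statement is an instance of the seminorm version --- an insight your closing remark (``only the operator index needs to be tracked'') essentially contains without exploiting.
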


\begin{proof}
Let ${\cal T}\in\text{\bf Mwc}(E,F)$. Take $\varepsilon>0$, $A=B_E$, and consider $\rho(x)=\sup\limits_{T\in{\cal T}}\|Tx\|$.
The seminorm $\rho$ is norm continuous as $\rho(x)\le\|T\|\|x\|\le K\|x\|$ for all $T\in{\cal T}$ and some $K>0$.
If $(x_n)$ is a disjoint sequence $(x_n)$ in $B_E$ then $\rho(x_n)\to 0$ by Definition \ref{collect}.
By \cite[Theorem~4.36]{AlBu}, there exists $u\in E_+$ such that $\sup\limits_{T\in{\cal T}} \|T[(|x|-u)^+]\|=\rho(\text{\rm Id}_E(|x|-u)^+)\le\varepsilon$
for $x\in B_E$.
\end{proof}

\begin{corollary}\label{prop.7_20.Jul}
$\text{\bf L}_{ob}(E,F)\cap\text{\bf Mwc}(E,F)\subseteq\text{\bf semi-K}(E,F)$.
\end{corollary}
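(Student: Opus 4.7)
The plan is to combine the two assumptions by using Theorem~\ref{c-5.60} to produce an $E$--side truncation element and then pushing it through via the collective order boundedness. Fix $\varepsilon>0$. Since ${\cal T}\in\text{\bf Mwc}(E,F)$, Theorem~\ref{c-5.60} yields a $u\in E_+$ with
$$
   \sup_{T\in{\cal T}}\|T[(|y|-u)^+]\|\le\tfrac{\varepsilon}{2}\qquad(\forall\,y\in B_E).
$$
Since ${\cal T}\in\text{\bf L}_{ob}(E,F)$, there is a $b\in F_+$ with $\bigcup_{T\in{\cal T}}T[-u,u]\subseteq[-b,b]$. I will show that ${\cal T}B_E\subseteq[-2b,2b]+\varepsilon B_F$, which, since $\varepsilon>0$ is arbitrary, gives ${\cal T}\in\text{\bf semi-K}(E,F)$.

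For $x\in B_E$ note that $x^+,x^-\in B_E$ because the lattice operations are contractive. The basic truncation identity $y=y\wedge u+(y-u)^+$ applied to $y=x^+$ and $y=x^-$ gives
$$
   x=x^+-x^-=\bigl[(x^+\wedge u)-(x^-\wedge u)\bigr]+\bigl[(x^+-u)^+-(x^--u)^+\bigr].
$$
For any $T\in{\cal T}$ the first bracket is mapped into $T[0,u]-T[0,u]\subseteq[-b,b]-[-b,b]\subseteq[-2b,2b]$. For the second bracket, applying the displayed inequality above to $y=x^+$ and $y=x^-$ (both lie in $B_E$, and $|x^\pm|=x^\pm$) gives
$$
   \bigl\|T[(x^+-u)^+]-T[(x^--u)^+]\bigr\|\le\tfrac{\varepsilon}{2}+\tfrac{\varepsilon}{2}=\varepsilon.
$$
Hence $Tx\in[-2b,2b]+\varepsilon B_F$ for all $T\in{\cal T}$ and all $x\in B_E$, as required.

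The only delicate point is the decomposition step: one must ensure that the $E$--truncation $u$ produced by Theorem~\ref{c-5.60} can be applied separately to $x^+$ and $x^-$ (not merely to $|x|$), which is where the fact that $x^\pm\in B_E$ in a Banach lattice is used. After this, the collective order bound on $T[0,u]$ converts the ``bounded part'' into an order interval in $F$, and the ``tail part'' is absorbed by $\varepsilon B_F$; no positivity of the operators $T\in{\cal T}$ is required.
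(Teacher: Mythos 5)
Your proof is correct and follows essentially the same route as the paper's: truncate via Theorem~\ref{c-5.60}, split $x=x^+-x^-$, apply the truncation identity $y=y\wedge u+(y-u)^+$ to each part, and use collective order boundedness to trap $\bigcup_{T\in{\cal T}}T[-u,u]$ in a fixed order interval of $F$ (a step you make explicit where the paper's final line is sloppy about it). The only cosmetic differences are your $\varepsilon/2$ bookkeeping and target interval $[-2b,2b]$; the norm boundedness needed for Theorem~\ref{c-5.60}, which the paper derives from $\text{\bf L}_{ob}$, is in any case already built into Definition~\ref{collect}, so nothing is missing.
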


\begin{proof}
Let ${\cal T}\in\text{\bf L}_{ob}(E,F)\cap\text{\bf Mwc}(E,F)$ and $\varepsilon>0$. 
Since ${\cal T}\in\text{\bf L}_{ob}(E,F)$, \cite[Theorem 2.1]{Em24coll_order} implies 
that ${\cal T}$ is norm bounded. As ${\cal T}\in\text{\bf Mwc}(E,F)$, by Theorem \ref{c-5.60} there exists $u\in E_+$ with
$$
   \sup\limits_{T\in{\cal T}} \|T(|x|-u)^+\|\le\varepsilon \quad \text{\rm for all} \quad x\in B_E.
  \eqno(3)
$$ 
If $y\in B_E^+$ and $T\in{\cal T}$ then $Ty = T(y\wedge u) +T\big((y-u)^+\big) \in T[0,u]+\varepsilon B_F$ by (3). Therefore,
for every $x\in B_E$ and $T\in{\cal T}$ we have
$$
  Tx = T(x^+) - T(x^-)\in T[0,u]-T[0,u]+2\varepsilon B_F\subseteq T[-u,u]+2\varepsilon B_F.
$$
Thus, $\bigcup\limits_{T\in{\cal T}}T(B_E)\subseteq [-u,u]+2\varepsilon B_F$ proving $\cal T$ is collective semicompact. 
\end{proof}

\begin{example}\label{ex col aDP}
{\em Consider $T:E\to F$ be a positive almost Dunford--Pettis operator.
Then $[0,T]\in \text{\bf aDP}(E,F)$.
Indeed, let $(x_n)$ be a disjoint w-null sequence in $E$. 
Then $(|x_n|)$ is also disjoint and w-null.
Hence $\|T|x_n|\|\to 0$. As $|Sx_n|\leq S|x_n|\leq T|x_n|$
for each $S\in [0,T]$, we have $\|Sx_n\|\leq  \|T|x_n|\|$
and so $\sup\limits_{S\in[0,T]} \|Sx_n\|\leq  \|T|x_n|\|\to 0$.}
\end{example}

\noindent
The following is a collective version of \cite[Proposition~2.1]{AqEl11}.

\begin{proposition}\label{prop.8_20.Jul}
If $E'$ is a $\text{\rm KB}$-space then\\ $\text{\bf L}_{ob}(E,F)\cap\text{\bf aDP}(E,F)\subseteq\text{\bf semi-K}(E,F)$.
\end{proposition}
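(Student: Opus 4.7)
The plan is to reduce the statement to the already-proved Corollary \ref{prop.7_20.Jul}, which asserts $\text{\bf L}_{ob}(E,F)\cap\text{\bf Mwc}(E,F)\subseteq\text{\bf semi-K}(E,F)$. So the only thing I need to establish is the inclusion
$$
   \text{\bf L}_{ob}(E,F)\cap\text{\bf aDP}(E,F)\subseteq\text{\bf Mwc}(E,F)
$$
under the hypothesis that $E'$ is a KB-space; the result then follows by intersecting with $\text{\bf L}_{ob}(E,F)$ and applying the corollary.

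To prove this auxiliary inclusion, fix $\mathcal{T}\in\text{\bf L}_{ob}(E,F)\cap\text{\bf aDP}(E,F)$. By \cite[Theorem~2.1]{Em24coll_order}, the collective order-boundedness assumption guarantees that $\mathcal{T}$ is norm bounded, so it makes sense to speak of $\sup_{T\in\mathcal{T}}\|Tx_n\|$. Let $(x_n)$ be an arbitrary disjoint sequence in $B_E$. Since $E^{\ast}$ is a KB-space, Lemma \ref{E' is o-cont} yields $x_n\stackrel{w}{\to}0$ in $E$. The sequence $(x_n)$ is therefore a disjoint weakly null sequence, so the definition of collective almost Dunford--Pettis (Definition \ref{collect}\,h)) applied to $\mathcal{T}$ gives $\sup_{T\in\mathcal{T}}\|Tx_n\|\to 0$. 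As $(x_n)$ was an arbitrary disjoint sequence in $B_E$, this says exactly that $\mathcal{T}\in\text{\bf Mwc}(E,F)$.

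Combining $\mathcal{T}\in\text{\bf L}_{ob}(E,F)\cap\text{\bf Mwc}(E,F)$ with Corollary \ref{prop.7_20.Jul} yields $\mathcal{T}\in\text{\bf semi-K}(E,F)$, finishing the proof.

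There is no real obstacle here: the argument is a direct collective lift of \cite[Proposition~2.1]{AqEl11}, and the only nontrivial ingredient is the weak nullity of bounded disjoint sequences provided by the KB-space hypothesis. The small subtlety worth double-checking is that norm boundedness of $\mathcal{T}$ is needed before invoking the $\sup_{T\in\mathcal{T}}\|Tx_n\|$ notation, and this is exactly what \cite[Theorem~2.1]{Em24coll_order} provides from collective order boundedness.
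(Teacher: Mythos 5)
Your proof is correct and follows essentially the same route as the paper: reduce to Corollary~\ref{prop.7_20.Jul} by showing the family is collectively Mwc, using the KB-space hypothesis (via Lemma~\ref{E' is o-cont}) to get weak nullity of disjoint bounded sequences and then the collective aDP property. The only cosmetic difference is that you make the intermediate inclusion $\text{\bf L}_{ob}\cap\text{\bf aDP}\subseteq\text{\bf Mwc}$ explicit, which the paper leaves implicit.
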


\begin{proof}
Since ${\cal T}\in\text{\bf L}_{ob}(E,F)$ then ${\cal T}$ is norm bounded by \cite[Theorem 2.1]{Em24coll_order}.
In view of Corollary~\ref{prop.7_20.Jul}, 
it suffices to show ${\cal T}\in\text{\bf Mwc}(E,F)$. 
Let $(x_n)$ be a disjoint sequence in $B_E$. 
Since $E'$ has order continuous norm, $x_n \stackrel{w}{\to}0$ in $E$.
Therefore, $\sup\limits_{T\in{\cal T}} \|Tx_n\|\to 0$ as ${\cal T} \in\text{\bf aDP}(E,F)$.
Thus ${\cal T} \in\text{\bf Mwc}(E,F)$. 
\end{proof}

\newpage
{\tiny 
%%%%%%%%%%%%%%%%%%%%
%\begin{thebibliography}{60}
%%%%%%%%%%%%%%%%%%%%
%\bibliographystyle{amsalpha}
\bibliographystyle{plain}
%\bibliography{BIBLIMITED_21.12.2023}
\bibliography{BIBLIMITED_2025}

%\end{thebibliography}
%}
\end{document}